\theoremstyle{plain}
\newtheorem{theorem}{Theorem}[subsection]
\newtheorem{lemma}[theorem]{Lemma}
\newtheorem{proposition}[theorem]{Proposition}
\newtheorem{corollary}[theorem]{Corollary}
\theoremstyle{definition}
\newtheorem{definition}[theorem]{Definition}
\theoremstyle{remark}
\newtheorem{remark}[theorem]{Remark}
\newtheorem{question}{Question}
\numberwithin{equation}{subsection}
\DeclareMathOperator{\Ball}{\operatorname{Ball}}
\DeclareMathOperator{\bbC}{\mathbb{C}}
\DeclareMathOperator{\bbE}{\mathbb{E}}
\DeclareMathOperator{\bbF}{\mathbb{F}}
\DeclareMathOperator{\bbH}{\mathbb{H}}
\DeclareMathOperator{\bbI}{\mathbb{I}}
\DeclareMathOperator{\bbM}{\mathbb{M}}
\DeclareMathOperator{\bbN}{\mathbb{N}}
\DeclareMathOperator{\bbR}{\mathbb{R}}
\DeclareMathOperator{\bbT}{\mathbb{T}}
\DeclareMathOperator{\A}{\mathcal{A}}
\DeclareMathOperator{\Ad}{\operatorname{Ad}}
\DeclareMathOperator{\Aut}{\operatorname{Aut}}
\DeclareMathOperator{\aut}{\operatorname{Aut}}
\DeclareMathOperator{\B}{\mathcal{B}}
\DeclareMathOperator{\COL}{\operatorname{Col}}
\DeclareMathOperator{\D}{\mathcal{D}}
\newcommand{\eps}{\varepsilon}
\DeclareMathOperator{\F}{\mathcal{F}}
\DeclareMathOperator{\fin}{\operatorname{fin}}
\DeclareMathOperator{\Fix}{\operatorname{Fix}}
\renewcommand{\H}{\mathcal{H}}
\DeclareMathOperator{\Homeo}{\operatorname{Homeo}}
\DeclareMathOperator{\id}{\operatorname{id}}
\DeclareMathOperator{\J}{\mathcal{J}}
\DeclareMathOperator{\M}{\mathcal{M}}
\let\slasho=\o
\renewcommand{\o}{\overline}
\renewcommand{\O}{\mathcal{O}}
\DeclareMathOperator{\ROW}{\operatorname{Row}}
\let\slashS=\S
\renewcommand{\S}{\mathcal{S}}
\DeclareMathOperator{\typeI}{\operatorname{I}}
\DeclareMathOperator{\typeII}{\operatorname{II}}
\newcommand{\U}{\mathcal{U}}
\newcommand{\bh}{B(\H)}
\newcommand{\innerprod}[1]{\left\langle #1\right\rangle}
\newcommand{\norm}[1]{\left\|{#1}\right\|}
\newcommand{\cstaralg}{$C^*$-algebra}
\newcommand{\ran}{\operatorname{range}}
\begin{document}

\title{Norming in Discrete Crossed Products}
\author{David R. Pitts} \thanks{This work was supported by grants from the Simons Foundation (DRP \#316952, RRS \#522375).}
\address{University of Nebraska, Lincoln, NE 68588}
\email{dpitts2@unl.edu}
\author{Roger R. Smith}
\address{Texas A\&M University, College Station, TX 77843}
\email{rrsmith@tamu.edu}
\author{Vrej Zarikian}
\address{U. S. Naval Academy, Annapolis, MD 21402}
\email{zarikian@usna.edu}
\subjclass[2020]{Primary: 46L05, 46L40. Secondary: 46L10, 46L55, 46M10}
\keywords{$C^*$-algebras, crossed products, group actions, norming, pseudo-expectations}
\date{\today}
\begin{abstract}
Let $G \curvearrowright \A$ be an action of a discrete group on a unital $C^*$-algebra by $*$-automorphisms. In this note, we give two sufficient dynamical conditions for the $C^*$-inclusion $\A \subseteq \A \rtimes_r G$ to be norming in the sense of Pop, Sinclair, and Smith. As a consequence of our results, when $\A$ is separable or simple, the inclusion $\A \subseteq \A \rtimes_r G$ is norming provided it has a unique pseudo-expectation in the sense of Pitts.
\end{abstract}
\maketitle



\section{Introduction}

Pitts showed that when $\A \subseteq \B$ is a regular MASA inclusion, there exists a unique pseudo-expectation, i.e., a unique UCP (unital completely positive) map $\theta:\B \to I(\A)$ extending the inclusion $\iota:\A \to I(\A)$ of $\A$ into its injective envelope $I(\A)$ \cite[Theorem 3.5]{Pitts2017}. If, in addition, $\theta$ is faithful, meaning that $\theta(x^*x) = 0$ implies $x = 0$, then Pitts \cite[Theorem 8.2]{Pitts2017} proved that the $C^*$-inclusion $\A \subseteq \B$ is norming in the sense of Pop, Sinclair, and Smith \cite[Definition 2.1]{PopSinclairSmith2000}. That is, for all $X \in M_n(\B)$,
\[
    \|X\| = \sup\{\|RXC\|: R \in \Ball(\ROW_n(\A)), ~ C \in \Ball(\COL_n(\A))\}.
\]
Norming is a useful property for $C^*$-inclusions, since it allows one to deduce that certain bounded maps are in fact completely bounded (e.g., \cite[Theorem 2.10]{PopSinclairSmith2000} and \cite[Theorem 1.4]{Pitts2008}). This, in turn, enables one to show that certain isometric isomorphisms of non-self-adjoint operator algebras extend uniquely to $*$-isomorphisms of the generated $C^*$-algebras (e.g., \cite[Corollary 1.5]{Pitts2008}, \cite[Theorem 2.6]{Pitts2008}, \cite[Theorem 3.1.6]{CameronPittsZarikian2013}, and \cite[Theorem 8.4]{Pitts2017}). Pitts and Zarikian identified some other situations when having a faithful unique pseudo-expectation (in the sense of Definition~\ref{uextdefs}\eqref{uextdefs4}) implies norming \cite[Theorem 6.8]{PittsZarikian2015}. On the other hand, they exhibited an example of a non-norming $C^*$-inclusion $\A \subseteq \B$ with a faithful unique pseudo-expectation \cite[Example 6.9]{PittsZarikian2015}. In that example, $\A$ is non-separable. If $\B$ is separable, it remains open whether or not every $C^*$-inclusion $\A \subseteq \B$ with a faithful unique pseudo-expectation is norming \cite[Question 9 in Section 7.1]{PittsZarikian2015}. In this note, we show that if $\A$ is separable and $G \curvearrowright \A$ is an action of a discrete group by $*$-automorphisms, then the $C^*$-inclusion $\A \subseteq \A \rtimes_r G$ is norming provided it has a faithful unique pseudo-expectation. By \cite[Theorem 3.5]{Zarikian2019a} this happens when $G \curvearrowright \A$ satisfies Kishimoto's condition (see Definition~\ref{outercond}\eqref{outercond5}). 

More generally, for arbitrary $\A$ (not necessarily separable), Theorem~\ref{norming_I} and Theorem~\ref{norming_II} below give two different sufficient conditions on the action $G \curvearrowright \A$ that guarantee $\A \subseteq \A \rtimes_r G$ is norming. Interestingly, the proof of Theorem~\ref{norming_I} is $C^*$-algebraic, while the proof of Theorem~\ref{norming_II} is more von Neumann algebraic in character. Moreover, our arguments utilize the connections and subtle differences between outerness properties of the action of a discrete group $G$ on a $C^*$-algebra $\A$, and freeness properties of the related topological action of $G$ on the spectrum $\mathcal{\widehat{A}}$ of $\A$.

\section{Preliminaries}

\subsection{Standing Assumptions and Notation}

All \underline{ambient} $C^*$-algebras will be \underline{unital} unless otherwise specified. Of course, this does not apply to ideals and hereditary subalgebras of $C^*$-algebras. Given a state $\phi$ on the $C^*$-algebra $\A$, the corresponding GNS representation will be denoted $(\pi_\phi,\H_\phi,\xi_\phi)$. The term \textbf{$\mathbf{C^*}$-inclusion} will always mean an inclusion $\A \subseteq \B$ of \underline{unital} $C^*$-algebras with the \underline{same unit}.

A \textbf{discrete $\mathbf{C^*}$-dynamical system} is a triple $(\A,G,\alpha)$, where $\A$ is a (unital) $C^*$-algebra, $G$ is a discrete group, and $\alpha:G \to \Aut(\A)$ is a homomorphism of $G$ into the automorphism group of $\A$. If $(\A,G,\alpha)$ is a discrete $C^*$-dynamical system, then $\A \rtimes_r G$ (resp. $\A \rtimes_f G$) will denote the reduced (resp. full) crossed product of $\A$ by $G$ with respect to $\alpha$.  (We suppress reference to the action $\alpha$ in the notation because we  only consider a single action.)    If, in addition, $\A$ is a von Neumann algebra, then $\A \o{\rtimes} G$ will denote the von Neumann algebra crossed product of $\A$ by $G$ with respect to $\alpha$. Each of these crossed products contains a dense $*$-subalgebra (with respect to the appropriate topology) consisting of finite sums of formal products $ag$, with $a \in \A$ and $g \in G$, subject to the relations
\[
    (a_1g_1)(a_2g_2) = a_1\alpha_{g_1}(a_2)g_1g_2 \text{ and } (ag)^* = \alpha_g^{-1}(a^*)g^{-1}.
\]
There are canonical inclusions $\A \subseteq \A \rtimes_r G$ and (if $\A$ is a von Neumann algebra) $\A \rtimes_r G \subseteq \A \o{\rtimes} G$. On the other hand, $\A \rtimes_r G$ is a quotient of $\A \rtimes_f G$. There is a faithful conditional expectation $\bbE:\A \rtimes_r G \to \A$ such that for any finitely-supported function $a:G \to \A$,
\[
    \bbE\left(\sum_g a_gg\right) = a_e.
\]
If $\A$ is a von Neumann algebra, $\bbE$ extends uniquely to a faithful normal conditional expectation $\bbE:\A \o{\rtimes} G \to \A$.  Details of these constructions and facts may be found in~\cite[Chapter 7]{PedersenBook}.

\subsection{Unique Extension Properties for $C^*$-Inclusions}

The condition of having a unique pseudo-expectation is one of several ``unique extension properties''  for a  $C^*$-inclusion which have received attention of late.  Our main results show that for $C^*$-inclusions of the form \[\A\subseteq \A\rtimes_r G,\] certain unique extension properties imply norming.   In this section, we briefly review the unique extension properties which arise in this article.
\begin{definition}\label{uextdefs}
\begin{enumerate}
\item A $C^*$-inclusion $\A \subseteq \B$ has the \textbf{pure extension property} (\textsf{PEP}) if every pure state on $\A$ extends uniquely to a pure state on $\B$.
  \medskip
\item A useful relaxation of the \textsf{PEP} is the \textbf{almost extension property} (\textsf{AEP}) of Nagy and Reznikoff \cite[Section 2]{NagyReznikoff2014}. Instead of insisting that every pure state on $\A$ extends uniquely to a pure state on $\B$, the requirement is that a weak-$*$ dense collection of pure states on $\A$ extend uniquely to pure states on $\B$. Obviously, the \textsf{PEP} implies the \textsf{AEP}.
  \medskip
\item A \textbf{conditional expectation} for a $C^*$-inclusion $\A \subseteq \B$ is a unital completely positive (UCP) map $E:\B \to \A$ such that $E|_{\A} = \id$. Since we are not assuming that $\A$ is injective, there is no reason to expect that the identity map $\id:\A \to \A$ will have a UCP extension $E:\B \to \A$. In fact, if $\B$ is injective but $\A$ is not, then no such extension is possible. So there are many $C^*$-inclusions with no conditional expectations at all. We will denote by $\text{\textsf{CE}}_{\leq 1}$ the property of having \underline{at most one} conditional expectation.
\medskip\item\label{uextdefs4} Hamana proved that every $C^*$-algebra $\A$ has an \textbf{injective envelope} $I(\A)$ \cite[Theorem 4.1]{Hamana1979}. This is an injective $C^*$-algebra containing $\A$, and such that the only intermediate injective operator system $\A \subseteq \S \subseteq I(\A)$ is $I(\A)$ itself. The injective envelope is unique up to a $*$-isomorphism that fixes $\A$ pointwise. By injectivity, the inclusion map $\iota:\A \to I(\A)$ must have a UCP extension $\theta:\B \to I(\A)$. Pitts called such extensions \textbf{pseudo-expectations} in \cite{Pitts2017}, because they generalize conditional expectations. When a $C^*$-inclusion admits a unique pseudo-expectation, we say that it has the \textbf{unique pseudo-expectation property} (\textsf{!PsExp}), and if the unique pseudo-expectation happens to be faithful, we say that the $C^*$-inclusion has the \textbf{faithful unique pseudo-expectation property} (\textsf{f!PsExp}). Clearly, \textsf{!PsExp} implies $\text{\textsf{CE}}_{\leq 1}$.
\end{enumerate}
\end{definition}
It has been known for some time that the unique extension properties described above satisfy the following (in general, strict) hierarchy:
\[
    \text{\textsf{PEP}} \implies \text{\textsf{AEP}} \implies \text{\textsf{!PsExp}} \implies \text{\textsf{CE}}_{\leq 1}
\]
Only the middle implication is non-trivial. We provide an elementary proof in Appendix \ref{A}, although it also follows from Kwa\'{s}niewski and Meyer's work on aperiodicity, which we now discuss.

\begin{definition}\label{aperiod}
  Following \cite[Definition 2.1]{KwasniewskiMeyer2022}, we say that a $C^*$-inclusion $\A \subseteq \B$ is \textbf{aperiodic} if for every $b \in \B$, $\eps > 0$, and $\D \in \bbH(\A)$ (the non-zero hereditary subalgebras of $\A$), there exists $d \in \D_+^1$ (the norm-one positive elements of $\D$) and $a \in \A$ such that $\|dbd - a\| < \eps$.
\end{definition}
The almost extension property implies aperiodicity \cite[Theorem 5.5]{KwasniewskiMeyer2022}, which in turn implies uniqueness of pseudo-expectations \cite[Theorem 3.6]{KwasniewskiMeyer2022}. When $\B$ is separable, aperiodicity is equivalent to the almost extension property \cite[Theorem 5.5]{KwasniewskiMeyer2022}.

Summarizing:
\begin{figure}[!ht]
\boxed{
\xymatrix{
    \textsf{PEP} \ar@{=>}[r] & \textsf{AEP} \ar@{=>}[r] & \text{aperiodicity} \ar@{=>}[r] \ar@/^2pc/@{=>}[l]^(0.5){\text{separable}} & \textsf{!PsExp} \ar@{=>}[r] & \text{\textsf{CE}}_{\leq 1}
}}
\caption{Hierarchy of Unique Extension Properties}
\label{fig1}
\end{figure}
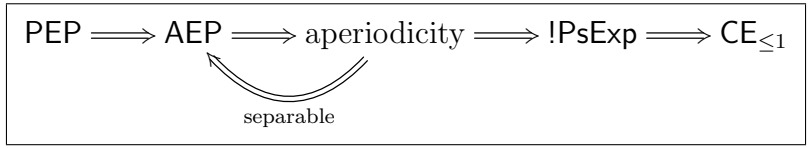

\subsection{Discrete Crossed Products} \label{DCP}

For the $C^*$-inclusion $\A \subseteq \A \rtimes_r G$ arising from a discrete $C^*$-dynamical system $(\A,G,\alpha)$, the unique extension properties from the previous section have been characterized in terms of the underlying dynamics (see Theorem \ref{discrete crossed products} below). In order to state the result, we will need various notions of outerness and freeness for $*$-automorphisms. Our understanding of these topics benefitted tremendously from \cite[Section 2]{KwasniewskiMeyer2018} and \cite[Chapter 8]{PedersenBook}.

\begin{definition}\label{outercond}
We say that $\alpha \in \Aut(\A)$:
\begin{enumerate}
\item is \textbf{inner} if $\alpha = \Ad(u)$ for some unitary $u \in \A$;
\item is \textbf{outer} if it is not inner;
\item has \textbf{no (non-zero) dependent elements} if the only $x \in \A$ such that
\[
    \text{$\alpha(a)x = xa$ for all $a \in \A$}
\]
is $x = 0$ (any such $x$ is called a ``dependent element'' for $\alpha$ \cite[Definition 1.3]{Kallman1969});
\item is \textbf{properly outer} if for every $\J \in \bbI^\alpha(\A)$ (the non-zero $\alpha$-invariant ideals of $\A$) and each unitary $u \in M(\J)$ (the multiplier algebra of $\J$), we have that $\|\alpha|_{\J} - \Ad(u)\| = 2$ \cite[Definition 2.1]{Elliott1980};
\item\label{outercond5} satisfies \textbf{Kishimoto's condition} if for all $\eps > 0$ and $\D \in \bbH(\A)$, there exists $d \in \D_+^1$ such that $\|d\alpha(d)\| < \eps$ \cite[Theorem 2.1]{Kishimoto1982}.
\end{enumerate}
\end{definition}

Clearly ``no dependent elements'' and ``properly outer'' are stronger forms of outerness. This is also the case for Kishimoto's condition. Indeed, if $\alpha = \Ad(u)$ satisfies Kishimoto's condition, then for all $\eps > 0$ and $\D \in \bbH(A)$, there exists $d \in \D_+^1$ such that
\[
    \|dud\| = \|dudu^*\| = \|d\alpha(d)\| < \eps.
\]
But then by the following lemma, $\|u\| \leq 2\eps$ for all $\eps > 0$, a contradiction.

\begin{lemma}
Let $\A$ be a unital $C^*$-algebra, $a \in \A$, and $\eps>0$. Suppose that for all $\D \in \bbH(\A)$ there exists $d \in \D_+^1$ such that $\|dad\| \leq \eps$. Then $\|a\| \leq 2\eps$.
\end{lemma}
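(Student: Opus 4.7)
The plan is to reduce to the self-adjoint case via the Cartesian decomposition $a = h + ik$, where $h := (a+a^*)/2$ and $k := (a-a^*)/(2i)$, and then to use spectral theory on a single self-adjoint operator. For any $d \geq 0$ one has $(dad)^* = da^*d$, so
\[
    dhd = \tfrac{1}{2}(dad + (dad)^*) = \operatorname{Re}(dad) \quad \text{and} \quad dkd = \operatorname{Im}(dad);
\]
in particular $\|dhd\| \leq \|dad\|$ and $\|dkd\| \leq \|dad\|$. Thus if the hypothesis on $a$ forces $\|h\| \leq \eps$ (and, symmetrically, $\|k\| \leq \eps$), then $\|a\| \leq \|h\| + \|k\| \leq 2\eps$, as required.

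For the self-adjoint reduction, fix $\delta > 0$ and choose $\lambda \in \operatorname{spec}(h)$ with $|\lambda| = \|h\|$, which exists because $h$ is self-adjoint. Pick a continuous $f\colon \bbR \to [0,1]$ with $f(\lambda) = 1$ and $\operatorname{supp}(f) \subseteq (\lambda - \delta, \lambda + \delta)$, and set $p := f(h) \in \A$, so $p \geq 0$ and $p \neq 0$. Apply the hypothesis to the non-zero hereditary subalgebra $\D := \overline{p\A p}$ to obtain $d \in \D_+^1$ with $\|dad\| \leq \eps$, and hence $\|dhd\| \leq \eps$ by the previous paragraph. The decisive claim is that every such $d$ satisfies
\[
    \|dhd - \lambda d^2\| = \|d(h-\lambda)d\| \leq \delta.
\]
Since $d \geq 0$ and $\|d\| = 1$ imply $\|d^2\| = 1$, this yields $\|dhd\| \geq |\lambda| - \delta = \|h\| - \delta$, and comparison with $\|dhd\| \leq \eps$ gives $\|h\| \leq \eps + \delta$; letting $\delta \downarrow 0$ then produces $\|h\| \leq \eps$.

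The main obstacle is the displayed estimate above, because elements of $\overline{p\A p}$ admit no convenient closed form. To handle it, I would exploit two facts: (i) $p$ and $h$ commute, as both lie in $C^*(h)$; and (ii) $(p^{1/n})_{n\geq 1}$ is an approximate unit for $\D$, so $p^{1/n} d p^{1/n} \to d$ in norm. By continuous functional calculus applied to $h$,
\[
    \|p^{1/n}(h-\lambda)p^{1/n}\| = \sup_{t} |f(t)^{2/n}(t-\lambda)| \leq \delta,
\]
since the supremum is effectively over $t \in \operatorname{supp}(f)$. Regrouping
\[
    (p^{1/n} d p^{1/n})(h-\lambda)(p^{1/n} d p^{1/n}) = p^{1/n} d\,\bigl[p^{1/n}(h-\lambda)p^{1/n}\bigr]\,d p^{1/n},
\]
and using $\|p^{1/n}\| \leq 1$ together with $\|d\| = 1$ bounds the norm by $\delta$. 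Passing to the limit $n \to \infty$ then yields $\|d(h-\lambda)d\| \leq \delta$, as required.
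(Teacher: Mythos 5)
Your proposal is correct, and its overall skeleton matches the paper's: both reduce to the self-adjoint part via the Cartesian decomposition (with the same observation that $\|dhd\|,\|dkd\|\le\|dad\|$, whence the factor $2$), and both then apply the hypothesis to a hereditary subalgebra localized where $h$ is close to its norm. The execution of the self-adjoint step is genuinely different, though. The paper works in the bidual: it takes the spectral projection $p=\chi_{(\delta,\infty)}(a)\in\A^{**}$ with $ap\ge\delta p$, sets $\D=(p\A^{**}p)\cap\A$ (using a bump function only to see $\D\ne 0$), and derives a contradiction from $\eps\ge\|dapd\|\ge\delta\|d\|^2=\delta$. You instead stay entirely inside $\A$, taking $p=f(h)$ for a bump function $f$ supported near a point $\lambda\in\sigma(h)$ with $|\lambda|=\|h\|$, and you replace the one-line estimate $dp=d$, $ap\ge\delta p$ with the approximate-unit argument showing $\|d(h-\lambda)d\|\le\delta$ for every $d\in\overline{p\A p}$; this is correct (the identity $\|p^{1/n}(h-\lambda)p^{1/n}\|=\sup_t|f(t)^{2/n}(t-\lambda)|\le\delta$ and the norm convergence $p^{1/n}dp^{1/n}\to d$ are both standard), and together with $\|d^2\|=\|d\|^2=1$ it yields the direct bound $\|h\|\le\eps+\delta$ rather than a contradiction. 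What each approach buys: the paper's is shorter once one accepts that $(p\A^{**}p)\cap\A$ is a non-zero hereditary subalgebra of $\A$, while yours avoids the bidual altogether and gives a slightly more quantitative, non-contradiction argument at the cost of the limiting step.
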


\begin{proof}
First let us show that when $a=a^*$ satisfies the hypotheses of the Lemma, then $\|a\|\leq \eps$.   Arguing by contradiction, assume $\|a\|>\eps$.   Replacing $a$ with $-a$ if necessary, we may assume $\|a\| \in \sigma(a)$ (the spectrum of $a$). Define $\delta = \frac{\|a\| + \eps}{2}$. Then $p = \chi_{(\delta,\infty)}(a) \in \A^{**}$ is a non-zero projection such that $ap \geq \delta p$. Let $\D := (p\A^{**}p) \cap \A$.  Then $\D$ is hereditary, and we now show it is non-zero.  Indeed, if $f \in C(\bbR,[0,1])$ satisfies $f((-\infty,\delta]) = \{0\}$ and $f(\|a\|) = 1$, then $0 \neq f(a) \in \A$ and $f(a)p = f(a)$.  So $\D\neq \{0\}$.

By assumption, there exists $d \in \D_+^1$ such that $\|dad\| \leq \eps$. But then
\[
    \eps \geq \|dad\| = \|dapd\| \geq \delta\|dpd\| = \delta\|d\|^2 = \delta,
\]
a contradiction. Thus $\|a\| \leq \eps$.

Now let $a \in \A$ be arbitrary. If $\D \in \bbH(\A)$ and $d \in \D_+^1$ satisfies $\|dad\| \leq \eps$, then
\[
    \left\|d\left(\frac{a+a^*}{2}\right)d\right\|, ~ \left\|d\left(\frac{a-a^*}{2i}\right)d\right\| \leq \eps.
\]
By the self-adjoint case above,
\[
    \|a\| \leq \left\|\frac{a+a^*}{2}\right\| + \left\|\frac{a-a^*}{2i}\right\| \leq 2\eps.
\]
\end{proof}

Kishimoto's condition has a number of non-trivial reformulations. Indeed, the following are equivalent:
\begin{enumerate}
\item $\alpha$ satisfies Kishimoto's condition;
\vskip 3pt\item for all $\J \in \bbI^\alpha(\A)$, the Borchers spectrum $\Gamma_{\text{Bor}}(\alpha|_{\J})$ is non-trivial (i.e., does not equal $\{1\} \subseteq \bbT$) \cite[Theorem 2.1]{Kishimoto1982};
\vskip 3pt\item $I(\alpha) \in \Aut(I(\A))$ is outer on any non-trivial invariant central summand, where $I(\alpha)$ is the unique extension of $\alpha$ to a $*$-automorphism of the injective envelope $I(\A)$ of $\A$ \cite[Theorem 7.4 and Remark 7.5]{Hamana1985};
\vskip 3pt\item $I(\alpha) \in \Aut(I(\A))$ has no dependent elements \cite[Proposition 5.1]{Hamana1982}.
\end{enumerate}
Because of these equivalences, Kishimoto's condition goes by several different names in the literature, including ``spectral non-triviality'' and (unfortunately!) ``properly outer''. Given our focus on pseudo-expectations, the last two of the above reformulations will be the most useful.\\

In general, we have the following implications:
\begin{equation}\label{variousimps}
\xymatrix{
    & \text{no dependent elements} \ar@{=>}[dr] & \\
    \text{Kishimoto's condition} \ar@{=>}[ur]\ar@{=>}[dr] & & \text{outer}\\
    & \text{properly outer} \ar@{=>}[ur] & \\
}
\end{equation}
The fact that Kishimoto's condition implies proper outerness was observed by Kishimoto \cite[Remark 2.5]{Kishimoto1982}. We provide a slightly different proof in Appendix \ref{C}. The fact that Kishimoto's condition implies no dependent elements can be deduced from \cite[Lemma 2.2]{Zarikian2019a}, for example.

We extend these definitions to discrete $C^*$-dynamical systems by insisting they hold pointwise. More precisely, we say that a discrete $C^*$-dynamical system $(\A,G,\alpha)$ is outer (resp. has no dependent elements, is properly outer, satisfies Kishimoto's condition) if $\alpha_g$ is outer (resp. has no dependent elements, is properly outer, satisfies Kishimoto's condition) for all $g \in G\setminus\{e\}$.

A discrete $C^*$-dynamical system $(\A,G,\alpha)$ gives rise to a discrete topological dynamical system $(\mathcal{\widehat{A}},G,\hat{\alpha})$, where $\mathcal{\widehat{A}}$ is the spectrum of $\A$ (cf. \cite[Chapter 3]{Dixmier1977} or \cite[Chapter 4]{PedersenBook}) and for each $g \in G$, $\hat{\alpha}_g \in \Homeo(\mathcal{\widehat{A}})$ is defined by
\[
    \text{$\hat{\alpha}_g([\pi]) = [\pi \circ \alpha_g^{-1}]$ for all $[\pi] \in \mathcal{\widehat{A}}$.}
\]
If $\alpha_g$ is inner, then $\hat{\alpha}_g = \id_{\mathcal{\widehat{A}}}$. 
Thus, in addition to outerness conditions on the original action $G \curvearrowright \A$, we may also consider freeness conditions on the induced action $G \curvearrowright \mathcal{\widehat{A}}$. Even though we are only concerned with $(\mathcal{\widehat{A}},G,\hat\alpha)$, we state the definitions for general topological dynamical systems.

\begin{definition}\label{freecond}
Let $G \curvearrowright X$ be an action of a discrete group on a topological space by homeomorphisms. For $g \in G$, let
\[
    \Fix(g) = \{x \in X: g \cdot x = x\}
\]
be the fixed points of $g$. We say that $G \curvearrowright X$ is:
\begin{enumerate}
\item \textbf{free} if $\Fix(g) = \emptyset$ for all $g \in G\setminus\{e\}$;
\item \textbf{essentially free} if $G \curvearrowright D$ is free for some dense $G$-invariant set $D \subseteq X$;
\item \textbf{topologically free} if $\Fix(g)^\circ = \emptyset$ for all $g \in G\setminus\{e\}$.
\end{enumerate}
\end{definition}

It is easy to see that
\[
    \text{freeness} \implies \text{essential freeness} \implies \text{topological freeness}
\]
\begin{remark}\label{tfr->esfr}
If $X$ is Hausdorff and $G$ is countable, then topological freeness implies essential freeness. Indeed, by the Baire Category Theorem,
\[
    D := \bigcap_{g \in G\setminus\{e\}} \Fix(g)^c
\]
is a dense $G$-invariant subset of $X$ such that $G \curvearrowright D$ is free.
\end{remark}

It remains to compare the freeness properties of $(\mathcal{\widehat{A}},G,\hat{\alpha})$ with the outerness properties of $(\A,G,\alpha)$. Archbold and Spielberg observed that if $G \curvearrowright \mathcal{\widehat{A}}$ is topologically free, then $G \curvearrowright \A$ is properly outer \cite[Proposition 1]{ArchboldSpielberg1994}. In fact:
\[
    \text{topologically free} \implies \text{Kishimoto's condition}
\]
So the weakest of the freeness properties implies the strongest of the outerness properties. We provide one proof of this fact in Appendix \ref{B}. It also follows from \cite[Corollary 4.8]{KwasniewskiMeyer2022}.

Here is a summary of the dynamical conditions and their relationships. 
\begin{figure}[!ht]
\boxed{
\xymatrix{
    & & & & \parbox{1in}{\center no dependent\\ elements} \ar@{=>}[dr] & \\
    \text{free} \ar@{=>}[r] & \parbox{0.7in}{\center essentially\\ free} \ar@{=>}[r] & \parbox{0.9in}{\center topologically\\ free} \ar@{=>}[r] & \parbox{0.9in}{\center Kishimoto's\\ condition} \ar@{=>}[ur]\ar@{=>}[dr] & & \text{outer}\\
    & & & & \parbox{0.6in}{\center properly\\ outer} \ar@{=>}[ur] & \\
}}
\caption{Hierarchy of Dynamical Conditions}
\label{fig2}
\end{figure}
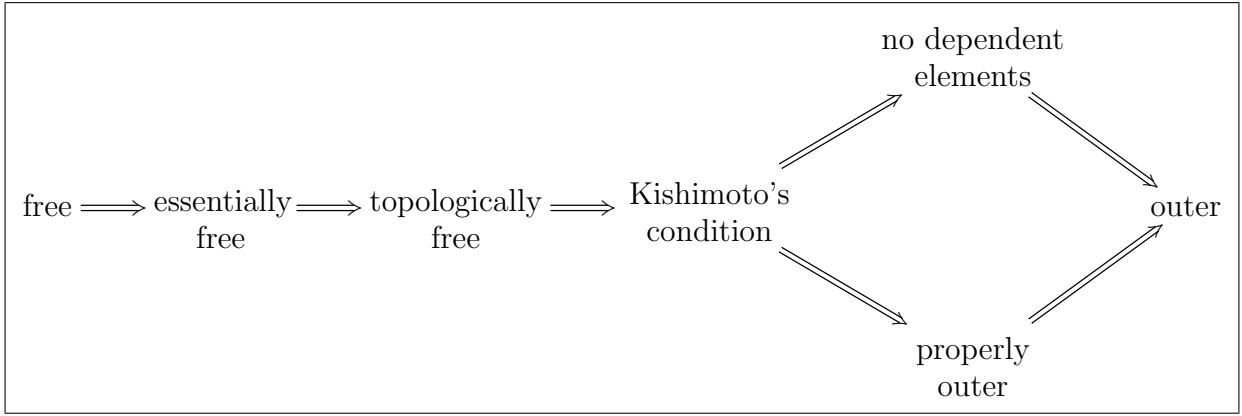

It should come as no surprise that for particular classes of $C^*$-algebras, certain of the outerness and freeness conditions discussed above might be equivalent. This will be important for us, so we record some of what is known and, to the best of our knowledge, provide original references.

\begin{theorem} \label{equivalences}
Let $G \curvearrowright \A$ be an action of a discrete group on a unital $C^*$-algebra.
\begin{enumerate}
\item[i.] If $\A$ is \textbf{abelian}, then
\[
    \text{topologically free} \iff \text{Kishimoto's condition} \iff \text{no dependent elements} \iff \text{properly outer};
\]
\item[ii.] If $\A$ is \textbf{separable}, then
\[
    \text{topologically free} \iff \text{Kishimoto's condition} \iff \text{properly outer};
\]
\item[iii.] If $\A$ is \textbf{simple}, then
\[
    \text{Kishimoto's condition} \iff \text{no dependent elements} \iff \text{properly outer} \iff \text{outer};
\]
\item[iv.] If $\A$ is a \textbf{von Neumann algebra}, then
\[
    \text{Kishimoto's condition} \iff \text{no dependent elements} \iff \text{properly outer}.
\]
\end{enumerate}
\end{theorem}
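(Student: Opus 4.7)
The general implications in Figure~\ref{fig2} already furnish one direction of every equivalence claimed in (i)--(iv); the task is to close each cycle by supplying the missing reverse implications in each specific setting. Throughout, fix $g \in G\setminus\{e\}$ and analyze $\alpha_g$.

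For the \emph{abelian case} (i), identify $\A \cong C(X)$ with $X = \mathcal{\widehat{A}}$. Every unitary in $\A$ is central, so $\Ad(u) = \id$ for all unitaries, and $\alpha_g(f) = f \circ \varphi_g^{-1}$ for a homeomorphism $\varphi_g$ of $X$. By Figure~\ref{fig2} it suffices to derive topological freeness from each of \textsf{PO} and \textsf{NDE}. If $\Fix(\varphi_g)$ has nonempty interior $U$, then the $\alpha_g$-invariant ideal $\J := \{f \in \A : f|_{X \setminus U} = 0\}$ satisfies $\alpha_g|_\J = \id_\J = \Ad(1)|_\J$, so $\|\alpha_g|_\J - \Ad(1)\| = 0$, contradicting \textsf{PO}; and the pointwise identity $x(t)\bigl(f(\varphi_g^{-1}(t)) - f(t)\bigr) = 0$, which must hold for every $f \in \A$ and $t \in X$, forces any dependent element $x$ to be supported on $\Fix(\varphi_g)$, so a bump function inside $U$ exhibits a nonzero dependent element, contradicting \textsf{NDE}.

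For the \emph{separable case} (ii), it remains to prove \textsf{PO} $\Rightarrow$ topologically free. Contrapositively, if $\Fix(\hat{\alpha}_g)^\circ$ is nonempty, then second countability of $\mathrm{Prim}(\A)$ (from separability of $\A$) allows one to produce a nonzero $\alpha_g$-invariant ideal $\J$ and a unitary $u \in M(\J)$ with $\alpha_g|_\J = \Ad(u)$, following the construction of \cite[Proposition 1]{ArchboldSpielberg1994}; this contradicts \textsf{PO}. For the \emph{simple case} (iii), simplicity leaves $\A$ as the only nonzero $\alpha_g$-invariant ideal and $M(\A) = \A$, so Figure~\ref{fig2} supplies everything except \textsf{outer} $\Rightarrow$ \textsf{KC}. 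The latter follows by applying \cite[Theorem 2.1]{Kishimoto1982} on $\A$ itself, together with the classical fact that on a simple $C^*$-algebra the Borchers spectrum $\Gamma_{\text{Bor}}(\alpha_g)$ is nontrivial whenever $\alpha_g$ is outer. For the \emph{von Neumann case} (iv), all three conditions coincide with the classical requirement that $\alpha_g$ be outer on every nonzero $\alpha_g$-invariant central summand of $\A$: proper outerness is defined precisely to encode this \cite[Definition 2.1]{Elliott1980}, the analogue for dependent elements originates with Kallman \cite{Kallman1969}, and the analogue for Kishimoto's condition is recoverable from Hamana's characterizations \cite[Proposition 5.1]{Hamana1982} and \cite[Theorem 7.4]{Hamana1985}.

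The principal obstacle will be (ii), where we must extract an honestly-implementing unitary from the mere failure of topological freeness on $\mathcal{\widehat{A}}$; the argument hinges on a disintegration-of-representations maneuver that uses separability essentially, and is the only nontrivial construction on the list. Parts (iii) and (iv), while resting on classical spectral theory and injective-envelope analysis, are routine given the reformulations of Kishimoto's condition recorded after Definition~\ref{outercond}, and our plan is to invoke them rather than reprove them.
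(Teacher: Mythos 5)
The paper's own ``proof'' of Theorem~\ref{equivalences} is purely a list of references, so the relevant comparison is whether your arguments and citations actually close each cycle. Your part (i) is a correct, self-contained argument (it essentially reproves \cite[Theorem 1]{EnomotoTamaki1975} and the remark following \cite[Proposition 1]{ArchboldSpielberg1994}): the ideal $C_0(U)$ for $U=\Fix(\varphi_g)^\circ$ and the support computation for dependent elements are both sound. Parts (iii) and (iv) are citation-level but match the substance of the paper's sources (for (iii), Olesen's theorem that an automorphism of a simple $C^*$-algebra with trivial Borchers spectrum is inner, combined with \cite[Theorem 2.1]{Kishimoto1982}; for (iv), Kallman's theorem and the classical derivation argument), though in (iv) you should be careful that Hamana's reformulations live in $I(\A)$, which differs from $\A$ when the von Neumann algebra is not injective, so the passage from central summands of $I(\A)$ back to central summands of $\A$ needs the argument the paper outsources to the discussion before \cite[Theorem 2.1]{Kishimoto1982}.

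The genuine gap is part (ii), which you yourself flag as ``the principal obstacle'' and then do not resolve. Two specific problems. First, the reference is misdirected: \cite[Proposition 1]{ArchboldSpielberg1994} proves that topological freeness implies proper outerness, i.e.\ exactly the implication already supplied by Figure~\ref{fig2}; it contains no construction producing an implementing unitary from the failure of topological freeness. Second, the intermediate claim you propose to extract --- a nonzero invariant ideal $\J$ and a unitary $u\in M(\J)$ with $\alpha_g|_{\J}=\Ad(u)$ \emph{exactly} --- is both stronger than what the hypothesis yields and stronger than what is needed: nonempty interior of $\Fix(\hat\alpha_g)$ only gives that $\alpha_g$ is implemented unitarily in each irreducible representation over that open set, and assembling these fibrewise unitaries into a single multiplier unitary faces continuity and cocycle obstructions; the negation of proper outerness only requires $\|\alpha_g|_{\J}-\Ad(u)\|<2$ for \emph{some} $\J$ and $u$. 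Bridging this is precisely the content of \cite[Theorem 6.6]{OlesenPedersen1982} (the reference the paper actually uses for (ii)), whose proof is a substantial piece of Connes/Borchers spectrum analysis using separability; a one-sentence appeal to ``disintegration of representations'' does not reproduce it. As written, the equivalence of proper outerness with topological freeness in the separable case is asserted rather than proved.
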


\begin{proof}
(i) \cite[Theorem 1]{EnomotoTamaki1975} and \cite[Remark (ii) following Proposition 1]{ArchboldSpielberg1994}.

(ii) \cite[Theorem 6.6]{OlesenPedersen1982}.

(iii) \cite[Theorem 4.2]{Olesen1975} (see also \cite[Corollary 8.9.10]{PedersenBook}.

(iv) \cite[Theorem 1.11]{Kallman1969} and \cite[discussion before Theorem 2.1]{Kishimoto1982}.
\end{proof}

\begin{remark}
For von Neumann algebras, the equivalent conditions in Theorem \ref{equivalences} (iv) all amount to the usual von Neumann-algebraic notion of ``proper outerness'', namely that the automorphisms are not inner on any non-zero invariant central summand.
\end{remark}

With this preparation, we  now state:

\begin{theorem} \label{discrete crossed products}
Let $(\A,G,\alpha)$ be a discrete $C^*$-dynamical system. Then
\begin{enumerate}
\item[i.] $\A \subseteq \A \rtimes_r G$ has the \textsf{PEP} $\iff$ $G \curvearrowright \mathcal{\widehat{A}}$ is free \cite[Corollary 2.5]{Zarikian2019b} (see also \cite[Theorem 2]{AkemannWeaver2004});
\item[ii.] $\A \subseteq \A \rtimes_r G$ has the \textsf{AEP} $\iff$ $G \curvearrowright \mathcal{\widehat{A}}$ is essentially free \cite[Corollary 2.6]{Zarikian2019b};
\item[iii.] $\A \subseteq \A \rtimes_r G$ has a unique pseudo-expectation $\iff$ $\A \subseteq \A \rtimes_r G$ is aperiodic \cite[Proposition 2.4]{KwasniewskiMeyer2022} $\iff$ $G \curvearrowright \A$ satisfies Kishimoto's condition \cite[Theorem 3.5]{Zarikian2019a};
\item[iv.] $\A \subseteq \A \rtimes_r G$ has a unique conditional expectation $\iff$ $\A^c = Z(\A)$ (the relative commutant equals the center) $\iff$ $G \curvearrowright \A$ has no dependent elements \cite[Theorem 3.2]{Zarikian2019a}.
\end{enumerate}
\end{theorem}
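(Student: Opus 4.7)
The plan is to treat all four equivalences through a common mechanism. If $\Phi \colon \A \rtimes_r G \to \mathcal{M}$ is a unital extension of the inclusion $\A \hookrightarrow \mathcal{M}$ (with $\mathcal{M}$ equal to $\A$ in (iv), $I(\A)$ in (iii), or the GNS space of a candidate pure-state extension in (i)--(ii)), then $\A$ lies inside the multiplicative domain of $\Phi$, so $\Phi$ is automatically $\A$-bimodular. Hence $\Phi$ is determined by its values on the canonical unitaries $u_g$, and the covariance relation $u_g a = \alpha_g(a) u_g$ translates to
\[
a \Phi(u_g) = \Phi(u_g) \alpha_g^{-1}(a) \qquad (a \in \A,\ g \in G),
\]
so $\Phi(u_g)$ is a dependent element for $\alpha_g$ in $\mathcal{M}$. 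Since $\bbE(u_g) = 0$ for $g \ne e$, uniqueness of $\Phi$ reduces to the vanishing of all such dependent elements when $g \ne e$.

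For (iv) this immediately gives unique conditional expectation $\iff$ no dependent elements in $\A$; the further equivalence with $\A^c = Z(\A)$ follows because, for $c \in \A^c$, the elements $\bbE(c u_g^{-1})$ are dependent elements for $\alpha_g$, and conversely any non-zero dependent element produces a non-central relative commutant element. For (iii), Hamana's extension theorem gives a canonical $I(\alpha)_g \in \Aut(I(\A))$, and the same bimodular argument inside $I(\A)$ gives $\textsf{!PsExp} \iff I(\alpha)_g$ has no non-zero dependent elements for $g \ne e$, which by Hamana's characterization recalled earlier in the paper is equivalent to Kishimoto's condition on $\alpha_g$. The aperiodicity equivalence is checked directly: for $b = \sum_g a_g u_g$ with finite support, $d b d - d a_e d = \sum_{g \ne e} d a_g \alpha_g(d) u_g$, and controlling its norm uniformly reduces to simultaneously making $\|d \alpha_g(d)\|$ small for the finitely many $g$ in $\mathrm{supp}(b)$.

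For (i) and (ii), I would use the Akemann--Weaver--Zarikian correspondence between pure-state extensions and the $G$-action on $\widehat{\A}$: a pure state $\phi$ with GNS class $x \in \widehat{\A}$ has a unique pure-state extension to $\A \rtimes_r G$ precisely when the stabilizer $G_x$ is trivial. Freeness therefore characterizes the \textsf{PEP}, and essential freeness (a dense $G$-invariant set of trivial-stabilizer points) corresponds exactly to a weak-$*$ dense collection of uniquely extending pure states, i.e.\ the \textsf{AEP}, since the map $\phi \mapsto [\pi_\phi]$ is continuous and surjective onto $\widehat{\A}$.

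The main obstacle is the aperiodicity-to-Kishimoto reduction: given a hereditary subalgebra $\D$ and a finite set $F \subseteq G \setminus \{e\}$, one must produce a single $d \in \D_+^1$ with $\|d \alpha_g(d)\|$ small for every $g \in F$ \emph{simultaneously}. This is a routine but technical iteration within nested hereditary subalgebras, and is the common thread tying these results to the Olesen--Pedersen analysis of Borchers spectra and to the Kwa\'sniewski--Meyer aperiodicity framework.
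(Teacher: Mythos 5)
First, note that the paper does not prove Theorem \ref{discrete crossed products} at all: it is a survey of known results, and the ``proof'' consists entirely of the citations to \cite{Zarikian2019b}, \cite{Zarikian2019a}, \cite{KwasniewskiMeyer2022}, and \cite{AkemannWeaver2004}. So the only meaningful comparison is between your sketch and the arguments in those references. Your central mechanism --- $\A$ lies in the multiplicative domain of any UCP extension $\Phi$, hence $\Phi$ is $\A$-bimodular, hence $\Phi(g)$ is a dependent element for (the extension of) $\alpha_g$ --- is indeed the engine of the cited proofs, and your computations for the easy implications (aperiodicity from simultaneous Kishimoto smallness, the passage between dependent elements and $\A^c \setminus Z(\A)$ via $x \mapsto x^*g$) are correct as far as they go.

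The genuine gap is that in every part you only argue one direction. The bimodularity argument shows: \emph{if} the relevant dependent elements vanish (equivalently, the relevant stabilizers are trivial), \emph{then} every UCP extension agrees with $\phi \circ \bbE$ (resp.\ $\iota \circ \bbE$, $\bbE$), so the extension is unique. The converse implications --- unique conditional expectation $\Rightarrow$ no dependent elements in $\A$; unique pseudo-expectation $\Rightarrow$ no dependent elements for $I(\alpha_g)$ in $I(\A)$; unique pure-state extension of $\phi$ $\Rightarrow$ $[\pi_\phi \circ \alpha_g^{-1}] \neq [\pi_\phi]$ for $g \neq e$ --- all require \emph{constructing a second completely positive extension} when the dynamical condition fails, and your sketch never does this. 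Concretely: given $0 \neq c \in \A^c \setminus Z(\A)$ one must exhibit a conditional expectation $E \neq \bbE$ (e.g.\ a suitably normalized $y \mapsto \bbE\bigl((1+c)^*y(1+c)\bigr)$, checking it restricts to the identity on $\A$ and genuinely differs from $\bbE$); given a nonzero dependent element in $I(\A)$ one must produce a second UCP map into $I(\A)$; and given $[\pi \circ \alpha_g^{-1}] = [\pi]$ one must use the intertwining unitary to build a second state extension of $\phi$, which is where the reduced-versus-full crossed product issue and the Archbold--Spielberg analysis enter. These constructions are the substantive halves of the cited theorems. Secondarily, the reduction of aperiodicity to single-element Kishimoto smallness requires the nested-hereditary-subalgebra iteration (using functions $f$ with $f(t) \leq Ct$ so that smallness of $\|d_1\alpha_g(d_1)\|$ persists inside $\overline{f(d_1)\A f(d_1)}$); you correctly flag this as the technical crux but do not carry it out. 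As written, the proposal establishes roughly half of each stated equivalence.
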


\subsection{Norming} \label{norming section}

For the reader's convenience,  we next collect the norming results on which this paper relies.

Recall from~\cite[Equation~2.12]{PopSinclairSmith2000} that a $C^*$-algebra $\A \subseteq B(\H)$ is called \textbf{locally cyclic} if for all $\eps > 0$ and $\xi_1, \xi_2, ..., \xi_n \in \H$, there exists $\xi \in \H$ and $a_1, a_2, ..., a_n \in \A$ such that $\|a_i\xi - \xi_i\| < \eps$, $1 \leq i \leq n$.   Examples of locally cyclic subalgebras in $B(\H)$  include  MASAs and von Neumann algebras of types ${\rm I}_\infty$, ${\rm II}_\infty$, and ${\rm III}$.  

\begin{theorem}[\cite{PopSinclairSmith2000}, Theorem 2.7] \label{locally cyclic}
Let $\A \subseteq B(\H)$ be a (not necessarily unital) $C^*$-algebra. Then $\A$ norms $B(\H)$ if and only if $\A$ is locally cyclic.  In particular, when $\A$ is a MASA in $B(\H)$, $\A$ norms $B(\H)$.
\end{theorem}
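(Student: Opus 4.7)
The nontrivial direction is that local cyclicity implies norming; the converse is more routine, proceeding by constructing a rank-one test operator on $\H^n$ whose norm is recoverable by rows and columns from $\A$ only if one can approximate the given vector by elements of $\A\xi$ for a single $\xi$. I therefore focus on the forward implication.

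The plan reduces everything to the following \emph{vector factoring lemma}: if $\A \subseteq B(\H)$ is locally cyclic, then for every $\eta \in \Ball(\H^n)$ and every $\epsilon > 0$ there exist $C \in \Ball(\COL_n(\A))$ and $\xi \in \Ball(\H)$ with $\|C\xi - \eta\| < \epsilon$. Granting the lemma, norming follows at once: given $X \in M_n(B(\H))$, choose unit vectors $\eta, \zeta \in \H^n$ with $|\langle X\eta, \zeta\rangle| > \|X\| - \epsilon$, apply the lemma to $\eta$ and to $\zeta$ (for the latter, the column produced plays the role of $R^*$) to obtain $C \in \Ball(\COL_n(\A))$, $R \in \Ball(\ROW_n(\A))$, and $\xi_0, \xi_1 \in \Ball(\H)$ with $C\xi_0 \approx \eta$ and $R^*\xi_1 \approx \zeta$, and estimate
\[
    \|RXC\| \;\geq\; |\langle RXC\xi_0, \xi_1\rangle| \;=\; |\langle XC\xi_0, R^*\xi_1\rangle| \;\approx\; |\langle X\eta, \zeta\rangle| \;>\; \|X\| - \epsilon.
\]

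To prove the lemma, I would first use local cyclicity to produce $\xi' \in \H$ and $a_1, \ldots, a_n \in \A$ with $\|a_i\xi' - \eta_i\| < \epsilon/\sqrt{n}$, set $C' := (a_1, \ldots, a_n)^T \in \COL_n(\A)$, and form the positive element $b := (C'^*C')^{1/2} = \bigl(\sum_i a_i^*a_i\bigr)^{1/2} \in \A_+$ via continuous functional calculus. The polar decomposition of $C'$ as an operator $\H \to \H^n$ gives $C' = Vb$, where $V \in B(\H, \H^n)$ is a partial isometry whose initial projection is the range projection of $b$. Set $\xi := b\xi'$. Because $\xi$ lies in the initial space of $V$, one has $V\xi = C'\xi' \approx \eta$ and $\|\xi\| = \|V\xi\| = \|C'\xi'\| \approx \|\eta\| \leq 1$. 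The partial isometry $V$ belongs to $\COL_n(\A'')$ rather than $\COL_n(\A)$ in general: to see this, embed $C'$ as the only non-zero column of a matrix in $M_{n+1}(\A)$ and take the polar decomposition there, which lives in $M_{n+1}(\A'')$. The unit-ball form of Kaplansky density for the inclusion $M_{n+1}(\A) \subseteq M_{n+1}(\A'')$ then supplies $C \in \Ball(\COL_n(\A))$ (the corresponding sub-column of an approximant) with $\|C\xi - V\xi\| < \epsilon$. A final rescaling of $\xi$ by $(1+\epsilon)^{-1}$ enforces $\|\xi\| \leq 1$ at the cost of only $O(\epsilon)$ extra error.

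The principal obstacle is the passage from $\COL_n(\A'')$ back to $\COL_n(\A)$ while keeping the unit-ball constraints on both $C$ and $\xi$ intact. This is resolved precisely because the polar-decomposition step produces a specific vector $\xi$ at which one needs only SOT-approximation of $V$, not norm-approximation; and the unit-ball form of Kaplansky density delivers exactly that.
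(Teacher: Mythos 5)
The paper does not prove this statement; it is quoted verbatim from Pop--Sinclair--Smith, so there is no internal proof to compare against, and your attempt has to be judged against the original argument and on its own merits. Your treatment of the main implication (local cyclicity $\Rightarrow$ norming) is correct. The reduction to the vector-factoring lemma is exactly the right move, and the lemma itself is sound as you prove it: with $b=(\sum_i a_i^*a_i)^{1/2}\in\A_+$ and $C'=Vb$ the polar decomposition, the vector $\xi=b\xi'$ lies in the initial space of $V$, so $\|\xi\|=\|V\xi\|=\|C'\xi'\|\le\|\eta\|+\eps$, and Kaplansky density for $M_n(\A)\subseteq M_n(\A'')$ does deliver a contractive column $C$ over $\A$ with $\|C\xi-V\xi\|$ small, since only SOT-approximation at the single vector $\xi$ is needed. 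This is a genuinely different (von Neumann algebraic) route from the one in \cite{PopSinclairSmith2000}, where the passage from $C'$ to a contractive column is done purely $C^*$-algebraically by functional-calculus regularization (replacing $a_i$ by $a_i(\delta+b^2)^{-1/2}$ and $\xi'$ by $(\delta+b^2)^{1/2}\xi'$), which avoids $\A''$ and Kaplansky altogether. Your version costs a bicommutant but is no less rigorous.

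There are, however, two pieces of the stated theorem you leave essentially unproved. First, the converse (norming $\Rightarrow$ locally cyclic) is dispatched in one sentence; the idea is right, but you should actually exhibit the test operator. Taking $\eta=(\xi_1,\dots,\xi_n)$ of norm one and $X=\eta\eta^*\in M_n(B(\H))$, norming gives $C\in\Ball(\COL_n(\A))$ with $\|C^*\eta\|>1-\eps$, and then $\xi:=C^*\eta$ satisfies $\|CC^*\eta-\eta\|^2\le 2-2(1-\eps)^2$, so the entries of $C$ and the single vector $\xi$ witness local cyclicity; this is short but it is not nothing, and as written your sketch would not compile into a proof. Second, the final clause of the theorem --- that a MASA in $B(\H)$ norms $B(\H)$ --- requires the separate fact that a MASA is locally cyclic, which you do not address at all; this uses the structure of maximal abelian von Neumann algebras (restrict to the cyclic subspace generated by $\xi_1,\dots,\xi_n$ and produce a single cyclic vector there) and is the part of Theorem~\ref{locally cyclic} actually invoked in Lemma~\ref{GNS norming}, so it cannot be omitted.
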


\begin{theorem}[\cite{PopSinclairSmith2000}, Theorem 2.9] \label{vNa norms B(H)}
Let $\M \subseteq B(\H)$ be a von Neumann algebra of type I$_\infty$, II$_\infty$, or III. Then $\M$ norms $B(\H)$.
\end{theorem}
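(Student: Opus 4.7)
The plan is to reduce to the criterion already recorded as Theorem~\ref{locally cyclic}: it suffices to show that $\M$, as a $C^*$-subalgebra of $B(\H)$, is locally cyclic. The common structural feature of von Neumann algebras of type $\typeI_\infty$, $\typeII_\infty$, and $\typeIII$ is that they are \emph{properly infinite} (for $\typeI_\infty$ and $\typeII_\infty$ this is built into the type decomposition, while for $\typeIII$ every non-zero projection is properly infinite). My intention is to leverage this purely algebraic property to manufacture, for any finite tuple $\xi_1,\dots,\xi_n \in \H$, a single vector $\xi$ and elements $a_i \in \M$ with $a_i\xi = \xi_i$ exactly.

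The key input is the standard structural fact that in a properly infinite von Neumann algebra $\M$ one can decompose the identity as an SOT-convergent sum $1 = \sum_{k=1}^\infty p_k$ of pairwise orthogonal projections $p_k \in \M$, each Murray--von Neumann equivalent in $\M$ to $1$. Pick partial isometries $v_k \in \M$ witnessing the equivalences, so $v_k^* v_k = p_k$ and $v_k v_k^* = 1$. Two elementary identities will drive the computation: first, $v_j^* = p_j v_j^*$, so $v_j^*\H \subseteq p_j\H$; second, $v_i = v_i p_i$, so $v_i p_j = v_i p_i p_j = 0$ whenever $i \ne j$. In particular $v_i v_j^* = v_i p_j v_j^* = 0$ for $i \ne j$.

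Given $\xi_1,\dots,\xi_n \in \H$, set
\[
\xi \;=\; \sum_{k=1}^{n} v_k^*\xi_k.
\]
Because the subspaces $p_k\H$ are mutually orthogonal, $\|\xi\|^2 = \sum_{k=1}^n \|v_k^*\xi_k\|^2 \leq \sum_{k=1}^n \|\xi_k\|^2$, so $\xi \in \H$. For each $i$, the orthogonality relations above give $v_i\xi = v_i v_i^*\xi_i = \xi_i$, so with $a_i := v_i \in \M$ we obtain $a_i\xi = \xi_i$ on the nose. Hence $\M$ is locally cyclic (in fact, one can take $\eps = 0$ in the definition), and Theorem~\ref{locally cyclic} concludes that $\M$ norms $B(\H)$.

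The only substantive ingredient is the decomposition $1 = \sum_k p_k$ with $p_k \sim_\M 1$; this is classical comparison theory for properly infinite von Neumann algebras (e.g. Kadison--Ringrose, Proposition 6.3.12) and is the one step outside the purely formal manipulation of partial isometries. Everything else is a short computation, and no appeal to the Hilbert space structure beyond the orthogonality of the ranges $p_k\H$ is needed.
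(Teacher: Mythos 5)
Your proof is correct and takes essentially the route the paper intends: the paper states this result without proof (citing Pop--Sinclair--Smith), but immediately before Theorem~\ref{locally cyclic} it lists von Neumann algebras of types ${\rm I}_\infty$, ${\rm II}_\infty$, and ${\rm III}$ among the locally cyclic subalgebras of $B(\H)$, which is precisely the reduction you carry out. Your verification of local cyclicity --- decomposing $1=\sum_k p_k$ with $p_k\sim 1$ in the properly infinite algebra $\M$ and using the partial isometries $v_k$ to hit the $\xi_i$ exactly --- is the standard argument and is sound.
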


\begin{theorem}[\cite{PopSinclairSmith2000}, Theorem 3.2] \label{no fin dim reps}
Let $\A \subseteq \B$ be a $C^*$-inclusion, where $\A$ has no finite-dimensional representations and $\B$ is finitely-generated as a left $\A$-module. Then $\A$ norms $\B$.
\end{theorem}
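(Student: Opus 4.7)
The plan is to apply Theorem~\ref{locally cyclic}: it suffices to construct a faithful representation $\pi:\B \to B(\H)$ in which $\pi(\A)$ is locally cyclic, for then $\pi(\A)$ norms $B(\H)$, which in turn implies that $\A$ norms $\B$.

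First I would propagate the no-finite-dimensional-representation hypothesis from $\A$ to $\B$: any unital representation $\sigma:\B \to M_n(\bbC)$ restricts to a unital (since $1_\A = 1_\B$), hence nonzero, finite-dimensional representation of $\A$, a contradiction. Thus every irreducible representation of $\B$ is infinite-dimensional. I would then take $\pi = \bigoplus_\alpha \pi_\alpha$, where $\{\pi_\alpha:\B \to B(\H_\alpha)\}$ is a separating family of irreducible representations of $\B$ obtained from pure states; this $\pi$ is faithful, and each $\H_\alpha$ is infinite-dimensional.

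The heart of the argument is verifying local cyclicity of $\pi(\A)$ in $B(\H)$. Let $b_1 = 1_\B, b_2, \ldots, b_k$ generate $\B$ as a left $\A$-module. For each $\alpha$ and any nonzero $\xi \in \H_\alpha$, the relation $\B = \sum_l \A b_l$ combined with irreducibility of $\pi_\alpha$ shows that $\{\pi_\alpha(b_l)\xi\}_{l=1}^k$ is $\A$-cyclic in $\H_\alpha$. \emph{The hard part will be} promoting this $k$-cyclicity to genuine local cyclicity of $\pi_\alpha(\A)$ in $B(\H_\alpha)$. For this I plan to exploit the infinite-dimensionality of $\H_\alpha$ together with Kadison's transitivity theorem applied to the irreducible action of $\B$ on $\H_\alpha$, which permits simultaneous interpolation on any finite set of linearly independent vectors. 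The idea is to choose a base vector $\xi_\alpha \in \H_\alpha$ whose images $\pi_\alpha(b_l)\xi_\alpha$ lie in mutually ``decoupled'' infinite-dimensional pieces of $\H_\alpha$, so that multi-point transitivity then produces, for any finite list of target vectors, single elements of $\A$ whose action on $\xi_\alpha$ approximates each target.

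Finally, once per-component local cyclicity of each $\pi_\alpha(\A)$ in $B(\H_\alpha)$ is established, a standard direct-sum argument --- approximating finite lists of vectors in $\bigoplus_\alpha \H_\alpha$ by vectors supported on finitely many indices, and combining the per-component approximations using orthogonality of distinct summands --- yields that $\pi(\A)$ is locally cyclic in $B(\H)$. Theorem~\ref{locally cyclic} then completes the proof.
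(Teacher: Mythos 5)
The paper itself does not prove this statement; it imports it from \cite[Theorem 3.2]{PopSinclairSmith2000}. Your preliminary reductions are fine: $\B$ inherits the absence of finite-dimensional representations, a faithful atomic representation $\pi=\bigoplus_\alpha\pi_\alpha$ exists with every $\H_\alpha$ infinite-dimensional, and $\{\pi_\alpha(b_l)\xi\}_{l=1}^k$ is an $\A$-cyclic set in each $\H_\alpha$. But the two remaining steps each contain a genuine gap. The per-component step is not an argument: Kadison transitivity applies to the irreducible algebra $\pi_\alpha(\B)$ and produces interpolating elements of $\B$, whereas local cyclicity of $\pi_\alpha(\A)$ requires elements of $\A$. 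The restriction $\pi_\alpha|_{\A}$ need not be irreducible; $\pi_\alpha(\A)''$ can be a proper von Neumann subalgebra of $B(\H_\alpha)$ with non-abelian commutant, possibly with a finite type II part, and for such algebras no transitivity theorem exists --- indeed local cyclicity of finite algebras is precisely the delicate issue that Theorem~\ref{vNa norms B(H)} sidesteps by excluding type II$_1$. Possessing a $k$-element cyclic set is much weaker than local cyclicity (one approximate cyclic vector for arbitrary finite target lists), and the hypothesis that $\A$ has no finite-dimensional representations, which must do real work somewhere, enters your sketch only through $\dim\H_\alpha=\infty$.

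The concluding ``standard direct-sum argument'' is false as stated: local cyclicity does not pass to direct sums of representations. Since $\pi(\A)=\{(\pi_\alpha(a))_\alpha : a\in\A\}$ is the \emph{diagonal} embedding, the same $a$ acts in every summand; to approximate targets $\xi_1\in\H_{\alpha_1}$ and $\xi_2\in\H_{\alpha_2}$ from a single vector $\xi$ you need $a_1\in\A$ with $\pi_{\alpha_1}(a_1)\xi^{(\alpha_1)}\approx\xi_1$ \emph{and} $\pi_{\alpha_2}(a_1)\xi^{(\alpha_2)}\approx 0$, which orthogonality of the summands does not supply unless the restrictions $\pi_\alpha|_{\A}$ are mutually disjoint. (The inclusion $\bbC 1\subseteq M_2(\bbC)$, a direct sum of two copies of a trivially locally cyclic representation that is not locally cyclic, already exhibits the phenomenon.) This is exactly the point the paper must labor over in Lemma~\ref{GNS norming} and Theorem~\ref{norming_I}, where a maximal mutually disjoint family is selected and \cite[Corollary 6.5]{PittsZarikian2015} is invoked so that the generated von Neumann algebra decouples into $\bigoplus B(\H_{\tilde\phi})$; for irreducible representations of $\B$ restricted to $\A$ no such disjointness is available. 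More globally, your strategy would establish that $\pi(\A)$ norms all of $B(\H)$, which by Theorem~\ref{locally cyclic} is far stronger than the assertion and does not follow from the hypotheses; the argument in \cite{PopSinclairSmith2000} instead uses the finite generation to factor elements of $M_n(\B)$ through rectangular matrices over $\A$ via the generators $b_1,\dots,b_k$, combined with a lemma on algebras without finite-dimensional representations, and never attempts to norm $B(\H)$.
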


\begin{theorem}[\cite{CameronSmith2016}, Theorem 5.1] \label{vN crossed product norming}
Let $G$ be a discrete group acting on a von Neumann algebra $\M$ by properly outer $*$-automorphisms. Then $\M$ norms $\M \overline{\rtimes} G$.
\end{theorem}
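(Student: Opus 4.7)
The plan is to argue via the $G$-invariant decomposition $\M = \M_f \oplus \M_\infty$ of $\M$ into its finite and properly infinite parts. Both summands are invariant under every $*$-automorphism (the decomposition is canonical), hence $G$-invariant; proper outerness restricts to each summand, and the crossed product splits accordingly as $(\M_f \overline{\rtimes} G) \oplus (\M_\infty \overline{\rtimes} G)$. Norming is stable under such direct-sum decompositions via a compression argument using the central projections, so it suffices to handle each summand separately.

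For $\M_\infty$, proper outerness is in fact not needed. I would realize $\M_\infty \overline{\rtimes} G$ on $\H_\infty \otimes \ell^2(G)$ via the canonical spatial construction, so that $\M_\infty$ embeds as $\M_\infty \otimes 1 \subseteq B(\H_\infty \otimes \ell^2(G))$. This is an amplification of $\M_\infty$, and since the type classification is preserved under amplification, $\M_\infty \otimes 1$ is again a direct sum of algebras of types $\typeI_\infty$, $\typeII_\infty$, and $\typeIII$ on $\H_\infty \otimes \ell^2(G)$. By Theorem~\ref{vNa norms B(H)}, it norms $B(\H_\infty \otimes \ell^2(G))$, and therefore a fortiori norms the subalgebra $\M_\infty \overline{\rtimes} G$.

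The main obstacle is the finite case: even $\M_f$ generally fails to norm $B(\H_f)$ (consider a $\typeII_1$ factor), so Theorem~\ref{vNa norms B(H)} is unavailable, and proper outerness has to be used essentially. My approach would be Dixmier-style averaging: for $X \in M_n(\M_f \overline{\rtimes} G)$, the $\sigma$-weakly closed convex hull of $\{UXU^* : U \in \U(M_n(\M_f))\}$ (with unitaries acting diagonally) meets the relative commutant $M_n(\M_f)' \cap M_n(\M_f \overline{\rtimes} G)$, which under proper outerness collapses to $M_n(Z(\M_f)^G) \subseteq M_n(\M_f)$ by the Kallman/Kishimoto-type computation of dependent elements. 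The truly delicate step — and where I expect the real effort to go — is converting this center-valued averaging conclusion into the operator-norm equality demanded by the definition of norming. One would need a Haagerup-style sharpening of the averaging that controls $\|X\|$ (not merely a center-valued trace of $X$) together with a quantitative spatial argument on $\H_f \otimes \ell^2(G)$ exploiting the proper outerness of each $\alpha_g$ to witness the norm via concrete row and column matrices in $\Ball(\ROW_n(\M_f))$ and $\Ball(\COL_n(\M_f))$.
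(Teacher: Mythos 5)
A preliminary remark: the paper does not prove this statement at all --- it is imported verbatim from \cite[Theorem 5.1]{CameronSmith2016} --- so the relevant comparison is with that proof and with the closely parallel arguments the paper does carry out (Lemma~\ref{guts} and the proofs of Theorems~\ref{norming_II} and~\ref{IA}), all of which use exactly the decomposition-by-type template you propose. Your reduction to $\M = \M_f \oplus \M_\infty$ and your handling of the properly infinite summand are sound in substance. One inaccuracy: $\M_\infty$ does not sit inside $\M_\infty \o{\rtimes} G \subseteq B(\H_\infty \otimes \ell^2(G))$ as $\M_\infty \otimes 1$, but as the twisted diagonal $\pi(x) = \sum_g \alpha_{g^{-1}}(x) \otimes e_{g,g}$ (see the proof of Lemma~\ref{guts}). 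This is harmless, because $\pi$ is a normal $*$-isomorphism, so $\pi(\M_\infty)$ is still properly infinite, hence locally cyclic in any representation (use isometries $v_1,\dots,v_n$ with orthogonal ranges summing to $1$), and Theorem~\ref{locally cyclic} applies.

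The genuine gap is the finite summand, which is precisely where you stop proving and start hoping. Two problems. First, your dichotomy is too coarse: the type $\typeII_1$ part of $\M_f$ indeed fails to norm $B(\H_f)$, but it norms its crossed product \emph{without any outerness hypothesis}, by the ``no finite-dimensional representations'' compression argument (Theorem~\ref{no fin dim reps} applied to the corners $(1\otimes p_F)(\cdot)(1\otimes p_F)$, exactly as in Lemma~\ref{guts}). So the only place proper outerness is actually needed is the finite type I part, $\bigoplus_n \typeI_n$. Second, for that part your proposal --- Dixmier averaging onto the relative commutant followed by an unspecified ``Haagerup-style sharpening'' --- is not a proof, and you say so yourself. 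Knowing that the relative commutant is small (under proper outerness it is $Z(\M_f)\otimes 1_n$, not $M_n(Z(\M_f)^G)$ as you write, since the Fourier coefficients of a commuting element are dependent elements) does not yield the norming equality; there is no general passage from trivial relative commutants, or even from faithful unique pseudo-expectations, to norming \cite[Example 6.9]{PittsZarikian2015}. The step you label ``truly delicate'' is the entire content of the theorem in the $\typeI_n$ case. The argument of \cite{CameronSmith2016} there is structural rather than averaging-based: $\M \cong Z(\M)\o{\otimes}\bbM_n$, any automorphism of a type I von Neumann algebra fixing the center pointwise on a central summand is inner there, so proper outerness forces the induced action on the abelian algebra $Z(\M)$ to be essentially free, and norming then comes from the Cartan/MASA machinery of \cite{CameronPittsZarikian2013} (compare Lemma~\ref{lem1}, which is the kind of quantitative corner estimate your sketch gestures at). As written, your proposal establishes the theorem only for $\M$ with no finite type I summand --- the part that never required proper outerness in the first place.
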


\begin{remark}
Since $\M \rtimes_r G \subseteq \M \o{\rtimes} G$, we see that $\M$ norms $\M \rtimes_r G$ whenever $G \curvearrowright \M$ is properly outer.
\end{remark}

\section{Main Results}

\setcounter{subsection}{1}
In this section we establish two (apparently) distinct norming results for the $C^*$-inclusion $\A \subseteq \A \rtimes_r G$, namely Theorems \ref{norming_I} and \ref{norming_II}. Before we begin, we pause to make a useful observation:

\begin{lemma} \label{countable reduction}
The $C^*$-inclusion $\A \subseteq \A \rtimes_r G$ is norming if and only if the $C^*$-inclusion $\A \subseteq \A \rtimes_r H$ is norming for every finitely-generated subgroup $H \subseteq G$.
\end{lemma}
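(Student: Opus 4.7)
The plan is to combine an isometric embedding of subgroup crossed products with a finite-support approximation. The key structural input is the following standard fact (cf.~\cite[Chapter 7]{PedersenBook}): for every subgroup $H \subseteq G$ there is a canonical isometric $*$-embedding $\A \rtimes_r H \hookrightarrow \A \rtimes_r G$ that restricts to the identity on $\A$. One way to see this is to realize $\A \rtimes_r G$ on $\H \otimes \ell^2(G)$ via a faithful representation of $\A$ on $\H$ and to observe that the decomposition of $\ell^2(G)$ into cosets of $H$ exhibits a copy of $\A \rtimes_r H$ on a reducing subspace. Since any injective $*$-homomorphism of $C^*$-algebras is isometric and induces isometric maps on matrix amplifications, this gives isometric inclusions $M_n(\A \rtimes_r H) \hookrightarrow M_n(\A \rtimes_r G)$ compatible with $\A$.

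For the $(\Rightarrow)$ direction, I would take $X \in M_n(\A \rtimes_r H)$ and view it inside $M_n(\A \rtimes_r G)$ without change of norm. Applying the hypothesized norming for $G$ produces, for each $\eps > 0$, $R \in \Ball(\ROW_n(\A))$ and $C \in \Ball(\COL_n(\A))$ with $\|RXC\| \geq \|X\| - \eps$. Because the entries of $R$ and $C$ lie in $\A \subseteq \A \rtimes_r H$, the element $RXC$ belongs to $\A \rtimes_r H$ and has the same norm in either crossed product, which yields norming for $H$.

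For the $(\Leftarrow)$ direction, I would use finite-support density. Fix $X \in M_n(\A \rtimes_r G)$ and $\eps > 0$. Since the algebraic crossed product is norm-dense in $\A \rtimes_r G$, I can approximate each entry $X_{ij}$ by a finitely supported element and then take the union of supports, producing $X' \in M_n(\A \rtimes_r G)$ with $\|X - X'\| < \eps$ all of whose entries are supported on a common finite set $F \subseteq G$. With $H := \langle F \rangle$, a finitely generated subgroup, we have $X' \in M_n(\A \rtimes_r H)$. The norming hypothesis for $H$ then gives $R \in \Ball(\ROW_n(\A))$ and $C \in \Ball(\COL_n(\A))$ with $\|RX'C\| \geq \|X'\| - \eps$, and a standard triangle-inequality estimate using $\|R\|, \|C\| \leq 1$ yields $\|RXC\| \geq \|X\| - 3\eps$. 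Letting $\eps \to 0$ gives norming for $G$.

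The only nontrivial ingredient is the isometric embedding of reduced crossed products over subgroups, and this is classical; everything else is a routine approximation. I do not anticipate any serious obstacle beyond carefully tracking the approximations, though it is worth noting that the analogous statement for \emph{full} crossed products would instead rest on a universal-property argument rather than the concrete Hilbert-space picture used here.
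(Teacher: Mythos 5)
Your proposal is correct and follows essentially the same route as the paper: the $(\Leftarrow)$ direction is the identical finite-support approximation with $H=\langle g_1,\dots,g_k\rangle$ and the same triangle-inequality estimate $\|RXC\|>\|X\|-3\eps$, while the $(\Rightarrow)$ direction (which the paper dismisses as elementary) is handled by the standard isometric embedding $\A\rtimes_r H\hookrightarrow \A\rtimes_r G$, a fact the paper uses implicitly as well.
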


\begin{proof}
Assume that $\A \subseteq \A \rtimes_r H$ is norming for every finitely-generated subgroup $H \subseteq G$. If $x \in \A \rtimes_r G$, then there exists $a_1, a_2, ..., a_k \in \A$ and $g_1, g_2, ..., g_k \in G$ such that
\[
    \left\|x - \sum_{i=1}^k a_ig_i\right\| < \eps.
\]
Letting $H = \langle g_1, g_2, ..., g_k \rangle \subseteq G$, we see that $x_0 := \sum_{i=1}^k a_ig_i \in \A \rtimes_r H$. Likewise, if $X \in M_n(\A \rtimes_r G)$, there exists a finitely-generated subgroup $H \subseteq G$ and $X_0 \in M_n(\A \rtimes_r H)$ such that $\|X - X_0\| < \eps$. By assumption, there exist $R \in \Ball(\ROW_n(\A))$ and $C \in \Ball(\COL_n(\A))$ such that $\|RX_0C\| > \|X_0\| - \eps$. Then
\[
    \|RXC\| > \|RX_0C\| - \eps > \|X_0\| - 2\eps > \|X\| - 3\eps.
\]
Thus $\A$ norms $\A \rtimes_r G$. The converse is elementary.
\end{proof}

\subsection{Essential Freeness implies Norming}

In this section, we show that $\A \subseteq \A \rtimes_r G$ is norming provided that $G \curvearrowright \mathcal{\widehat{A}}$ is essentially free. Equivalently, by Theorem \ref{discrete crossed products} (ii), the \textsf{AEP} implies norming for $C^*$-inclusions of the form $\A \subseteq \A \rtimes_r G$.

\begin{lemma}
Let $(\A,G,\alpha)$ be a discrete $C^*$-dynamical system, $\phi \in S(\A)$, and $g \in G$. For any extension $\varPhi \in S(\A \rtimes_r G)$ of $\phi$, the following representations of $\A$ are unitarily equivalent:
\begin{enumerate}
\item $\pi_\varPhi|_{\A}$ acting on $\H_g := \overline{\pi_\varPhi(\A g)\xi_\varPhi} \subseteq \H_\varPhi$;
\item $\pi_\phi \circ \alpha_g^{-1}$ acting on $\H_\phi$.
\end{enumerate}
\end{lemma}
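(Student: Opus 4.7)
The plan is to construct the intertwining unitary $U : \H_\phi \to \H_g$ directly, by specifying its action on a dense subspace in terms of the canonical dense subspaces given by the GNS constructions, then checking isometry and intertwining by matrix-coefficient calculations.

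First I would observe that $\H_g$ is indeed $\pi_\varPhi(\A)$-invariant: this follows from the fact that in $\A \rtimes_r G$ one has $\A \cdot (\A g) \subseteq \A g$, so $\pi_\varPhi(\A) \pi_\varPhi(\A g)\xi_\varPhi \subseteq \pi_\varPhi(\A g)\xi_\varPhi$, and invariance of $\H_g$ follows by continuity. Next I would define
\[
U\bigl(\pi_\phi(\alpha_g^{-1}(a))\xi_\phi\bigr) := \pi_\varPhi(ag)\xi_\varPhi, \qquad a \in \A.
\]
Since $\alpha_g^{-1}$ is a bijection of $\A$, the left-hand side ranges over the dense subspace $\pi_\phi(\A)\xi_\phi \subseteq \H_\phi$; the right-hand side ranges over $\pi_\varPhi(\A g)\xi_\varPhi$, which is dense in $\H_g$ by definition.

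The crux is showing $U$ is isometric. Using the crossed-product relations $(bg)^* = \alpha_g^{-1}(b^*)\,g^{-1}$ and $(a_1g_1)(a_2g_2) = a_1\alpha_{g_1}(a_2)\,g_1g_2$, a routine calculation gives
\[
(bg)^*(ag) = \alpha_g^{-1}(b^*)\alpha_g^{-1}(a)\,g^{-1}g = \alpha_g^{-1}(b^*a) \in \A.
\]
Since $\varPhi|_\A = \phi$, this yields
\[
\bigl\langle \pi_\varPhi(ag)\xi_\varPhi, \pi_\varPhi(bg)\xi_\varPhi\bigr\rangle = \varPhi\bigl(\alpha_g^{-1}(b^*a)\bigr) = \phi\bigl(\alpha_g^{-1}(b^*a)\bigr),
\]
which equals $\bigl\langle \pi_\phi(\alpha_g^{-1}(a))\xi_\phi, \pi_\phi(\alpha_g^{-1}(b))\xi_\phi\bigr\rangle$ on the nose. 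Thus $U$ is well-defined and isometric on a dense subspace with dense range, so it extends uniquely to a unitary $\H_\phi \to \H_g$.

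Finally I would verify the intertwining identity. For $a, b \in \A$,
\[
\pi_\varPhi(a)\, U\bigl(\pi_\phi(\alpha_g^{-1}(b))\xi_\phi\bigr) = \pi_\varPhi(ab\,g)\xi_\varPhi = U\bigl(\pi_\phi(\alpha_g^{-1}(ab))\xi_\phi\bigr) = U\,(\pi_\phi \circ \alpha_g^{-1})(a)\,\pi_\phi(\alpha_g^{-1}(b))\xi_\phi,
\]
so $\pi_\varPhi(a)|_{\H_g} = U\,(\pi_\phi\circ\alpha_g^{-1})(a)\,U^*$ on a dense set and hence everywhere. There is no real obstacle here; the only thing to be careful with is bookkeeping the twisted multiplication and adjoint in the crossed product so that $(bg)^*(ag)$ lands in $\A$ as $\alpha_g^{-1}(b^*a)$, which is exactly what makes the two inner products match.
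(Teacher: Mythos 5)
Your proposal is correct and follows essentially the same route as the paper: you define the same intertwining unitary (your $U(\pi_\phi(\alpha_g^{-1}(a))\xi_\phi)=\pi_\varPhi(ag)\xi_\varPhi$ is the paper's $U_0\pi_\phi(a)\xi_\phi=\pi_\varPhi(\alpha_g(a)g)\xi_\varPhi$ after the substitution $a\mapsto\alpha_g(a)$), check isometry via the crossed-product relations, and verify the covariance identity on a dense subspace. The only differences are cosmetic (inner products versus norms, and explicitly noting the $\pi_\varPhi(\A)$-invariance of $\H_g$, which is a harmless and reasonable addition).
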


\begin{proof}
For $a \in \A$, define
\[
    U_0\pi_\phi(a)\xi_\phi = \pi_\varPhi(\alpha_g(a)g)\xi_\varPhi.
\]
Since
\[
    \|\pi_\varPhi(\alpha_g(a)g)\xi_\varPhi\|^2
    = \varPhi(g^{-1}\alpha_g(a)^*\alpha_g(a)g) = \varPhi(a^*a) = \phi(a^*a) = \|\pi_\phi(a)\xi_\phi\|^2,
\]
we see that $U_0$ is well-defined and extends to a unitary operator $U:\H_\phi \to \H_g$. For $a, a_1 \in \A$, we have that
\begin{eqnarray*}
    U^*\pi_\varPhi(a)U\pi_\phi(a_1)\xi_\phi
    &=& U^*\pi_\varPhi(a)\pi_\varPhi(\alpha_g(a_1)g)\xi_\varPhi\\
    &=& U^*\pi_\varPhi(a\alpha_g(a_1)g)\xi_\varPhi\\
    &=& U^*\pi_\varPhi(\alpha_g(\alpha_g^{-1}(a)a_1)g)\xi_\varPhi\\
    &=& \pi_\phi(\alpha_g^{-1}(a)a_1)\xi_\phi\\
    &=& \pi_\phi(\alpha_g^{-1}(a))\pi_\phi(a_1)\xi_\phi.
\end{eqnarray*}
Thus $\pi_\varPhi(a) = U\pi_\phi(\alpha_g^{-1}(a))U^*$, $a \in \A$.
\end{proof}

\begin{lemma} \label{GNS direct sum}
Let $(\A,G,\alpha)$ be a discrete $C^*$-dynamical system and $\phi \in S(\A)$. Define $\varPhi := \phi \circ \bbE \in S(\A \rtimes_r G)$, where $\bbE:\A \rtimes_r G \to \A$ is the canonical faithful conditional expectation. Then the following representations of $\A$ are unitarily equivalent:
\begin{enumerate}
\item $\pi_\varPhi|_{\A}$ acting on $\H_\varPhi$;
\item $\bigoplus_{g \in G} (\pi_\phi \circ \alpha_g^{-1})$ acting on $\bigoplus_{g \in G} \H_\phi$.
\end{enumerate}
\end{lemma}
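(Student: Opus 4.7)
The plan is to realize $\H_\varPhi$ as an internal orthogonal direct sum of the subspaces
\[
\H_g := \overline{\pi_\varPhi(\A g)\xi_\varPhi}, \qquad g\in G,
\]
and then invoke the preceding lemma to identify each summand. Concretely, I will verify three things: (a) the $\H_g$ are pairwise orthogonal; (b) each $\H_g$ is invariant under $\pi_\varPhi(\A)$; and (c) their algebraic sum is dense in $\H_\varPhi$. Granting these, the previous lemma identifies $\pi_\varPhi|_\A$ on $\H_g$ with $\pi_\phi\circ\alpha_g^{-1}$ on $\H_\phi$ via a unitary $U_g$, and the unitary $U=\bigoplus_g U_g\colon \bigoplus_g\H_\phi\to\H_\varPhi$ will implement the desired equivalence.

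For (a), given $a_1,a_2\in\A$ and $g_1\ne g_2$ in $G$, the multiplication and adjoint rules give
\[
(a_2g_2)^*(a_1g_1)=\alpha_{g_2^{-1}}(a_2^*a_1)\,g_2^{-1}g_1,
\]
and since $g_2^{-1}g_1\ne e$, the canonical expectation $\bbE$ annihilates this element. Therefore
\[
\innerprod{\pi_\varPhi(a_1g_1)\xi_\varPhi,\pi_\varPhi(a_2g_2)\xi_\varPhi}=\phi(\bbE((a_2g_2)^*(a_1g_1)))=0,
\]
which gives $\H_{g_1}\perp\H_{g_2}$. For (b), invariance is immediate from $\pi_\varPhi(a)\pi_\varPhi(a_1g)\xi_\varPhi=\pi_\varPhi(aa_1\,g)\xi_\varPhi\in\H_g$. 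For (c), the finite sums $\sum_g a_g g$ form a dense $*$-subalgebra of $\A\rtimes_rG$, so $\pi_\varPhi(\A\rtimes_rG)\xi_\varPhi$ is dense in $\H_\varPhi$; every element of the dense subalgebra lies in the algebraic sum $\sum_g \pi_\varPhi(\A g)\xi_\varPhi$, and density follows.

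I expect no serious obstacle here; the only point that requires a moment's care is the orthogonality computation, which reduces neatly once one rewrites $(a_2g_2)^*(a_1g_1)$ in the canonical form $a\cdot h$ with $h=g_2^{-1}g_1$ and uses $\varPhi=\phi\circ\bbE$. Assembling the $U_g$ from the previous lemma into one unitary then completes the proof.
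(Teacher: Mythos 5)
Your proposal is correct and follows essentially the same route as the paper: decompose $\H_\varPhi$ as the orthogonal sum of the subspaces $\H_g=\overline{\pi_\varPhi(\A g)\xi_\varPhi}$, verify orthogonality via $\varPhi=\phi\circ\bbE$ and the vanishing of $\bbE$ on $g_2^{-1}g_1\neq e$, establish density from the dense $*$-subalgebra of finite sums, and identify each summand using the preceding lemma. Your explicit check of $\pi_\varPhi(\A)$-invariance of each $\H_g$ is a small addition the paper leaves implicit, but the argument is the same.
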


\begin{proof}
By the previous lemma, it suffices to show that
\[
    \H_\varPhi = \bigoplus_{g \in G} \H_g,
\]
where $\H_g := \overline{\pi_\varPhi(\A g)\xi_\varPhi} \subseteq \H_\varPhi$. Indeed, suppose $g_1, g_2 \in G$, with $g_1 \neq g_2$. Then for all $a_1, a_2 \in \A$, we have that
\begin{eqnarray*}
    \langle \pi_\varPhi(a_1g_1)\xi_\varPhi, \pi_\varPhi(a_2g_2)\xi_\varPhi \rangle
    &=& \varPhi(g_2^{-1}a_2^*a_1g_1)\\
    &=& \phi(\bbE(\alpha_{g_2}^{-1}(a_2^*a_1)g_2^{-1}g_1))\\
    &=& \phi(\alpha_{g_2}^{-1}(a_2^*a_1)\bbE(g_2^{-1}g_1))\\
    &=& 0.
\end{eqnarray*}
Thus $\H_{g_1} \perp \H_{g_2}$, so that $\bigoplus_{g \in G} \H_g \subseteq \H_\varPhi$. Now suppose $x \in \A \rtimes_r G$. Then there exist $a_1, a_2, ..., a_k \in \A$ and $g_1, g_2, ..., g_k \in G$ such that
\[
    \left\|x - \sum_{i=1}^k a_ig_i\right\| < \eps.
\]
It follows that
\[
    \left\|\pi_\varPhi(x)\xi_\varPhi - \sum_{i=1}^k \pi_\varPhi(a_ig_i)\xi_\varPhi\right\| < \eps,
\]
which implies $\H_\varPhi \subseteq \bigoplus_{g \in G} \H_g$.
\end{proof}

\begin{lemma} \label{equiv or disjoint}
Let $(\A,G,\alpha)$ be a discrete $C^*$-dynamical system and $\phi_1, \phi_2 \in PS(\A)$. For $i = 1, 2$, define $\varPhi_i := \phi_i \circ \bbE$, where $\bbE:\A \rtimes_r G \to \A$ is the canonical faithful conditional expectation. Then $\pi_{\varPhi_1}|_{\A}$ and $\pi_{\varPhi_2}|_{\A}$ are either equivalent or disjoint.
\end{lemma}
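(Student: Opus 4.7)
The plan is to apply Lemma \ref{GNS direct sum} and reduce the question to a comparison between two direct sums of irreducibles. For $i = 1, 2$ set $\rho_i^{(g)} := \pi_{\phi_i} \circ \alpha_g^{-1}$, so that $\pi_{\varPhi_i}|_\A$ is unitarily equivalent to $\bigoplus_{g \in G} \rho_i^{(g)}$ by Lemma \ref{GNS direct sum}. Since $\phi_i$ is pure, $\pi_{\phi_i}$ is irreducible, and since each $\alpha_g$ is a $*$-automorphism, each $\rho_i^{(g)}$ is likewise irreducible. I then split into two cases according to whether the families $\{\rho_1^{(g)}\}_{g \in G}$ and $\{\rho_2^{(g)}\}_{g \in G}$ share any equivalence class.

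\emph{Equivalent case.} Suppose there exist $g_1, g_2 \in G$ and a unitary $U : \H_{\phi_1} \to \H_{\phi_2}$ with $U\rho_1^{(g_1)}(a) = \rho_2^{(g_2)}(a)U$ for all $a \in \A$. Expanding the definitions and substituting $b = \alpha_{g_1}^{-1}(a)$ yields $U\pi_{\phi_1}(b) = \pi_{\phi_2}(\alpha_{g_2^{-1}g_1}(b))U$, and a further substitution $b = \alpha_g^{-1}(a)$ combined with the cocycle identity $\alpha_g \alpha_h = \alpha_{gh}$ gives $U\rho_1^{(g)}(a) = \rho_2^{(gg_1^{-1}g_2)}(a) U$ for every $g \in G$. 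Because $g \mapsto gg_1^{-1}g_2$ is a bijection of $G$, assembling the copies of $U$ into a block-diagonal unitary permuted by this bijection produces a unitary equivalence $\pi_{\varPhi_1}|_\A \simeq \pi_{\varPhi_2}|_\A$.

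\emph{Disjoint case.} Otherwise, no $\rho_1^{(g_1)}$ is unitarily equivalent to any $\rho_2^{(g_2)}$. Let $T: \H_{\varPhi_1} \to \H_{\varPhi_2}$ be any bounded operator intertwining $\pi_{\varPhi_1}|_\A$ with $\pi_{\varPhi_2}|_\A$, and let $p_g^{(i)}$ denote the projection of $\H_{\varPhi_i}$ onto its $g$-th summand under the decomposition above. The compression $p_h^{(2)} T p_g^{(1)}$ intertwines the inequivalent irreducibles $\rho_1^{(g)}$ and $\rho_2^{(h)}$, hence vanishes by Schur's lemma. Since this holds for all $g, h \in G$, we conclude $T = 0$, so the two representations are disjoint.

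The only genuine obstacle lies in pinning down the correct bijection $g \mapsto gg_1^{-1}g_2$ in the equivalent case; once that index shift is identified, everything reduces either to the GNS decomposition of Lemma \ref{GNS direct sum} or to a direct application of Schur's lemma.
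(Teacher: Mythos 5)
Your proposal is correct and follows essentially the same route as the paper: both reduce to the decomposition $\pi_{\varPhi_i}|_{\A}\cong\bigoplus_{g}(\pi_{\phi_i}\circ\alpha_g^{-1})$ from Lemma~\ref{GNS direct sum} and then argue via Schur's lemma that either no summands are equivalent (disjointness) or one equivalence $\pi_{\phi_1}\circ\alpha_{g_1}^{-1}\cong\pi_{\phi_2}\circ\alpha_{g_2}^{-1}$ propagates to an equivalence of the full direct sums. The only difference is presentational: you make explicit the index bijection $g\mapsto gg_1^{-1}g_2$ and the Schur-compression argument, both of which the paper leaves implicit.
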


\begin{proof}
Suppose $\pi_{\varPhi_1}|_{\A}$ and $\pi_{\varPhi_2}|_{\A}$ are not disjoint. By Lemma \ref{GNS direct sum}, $\pi_{\varPhi_1}|_{\A} \cong \bigoplus_{g \in G} (\pi_{\phi_1} \circ \alpha_g^{-1})$ and $\pi_{\varPhi_2}|_{\A} \cong \bigoplus_{g \in G} (\pi_{\phi_2} \circ \alpha_g^{-1})$. It follows that for some $g_1, g_2 \in G$, $\pi_{\phi_1} \circ \alpha_{g_1}^{-1}$ and $\pi_{\phi_2} \circ \alpha_{g_2}^{-1}$ are not disjoint. Since $\pi_{\phi_1}, \pi_{\phi_2}$ are irreducible, $\pi_{\phi_1} \circ \alpha_{g_1}^{-1} \cong \pi_{\phi_2} \circ \alpha_{g_2}^{-1}$. Then $\bigoplus_{g \in G} (\pi_{\phi_1} \circ \alpha_g^{-1}) \cong \bigoplus_{g \in G} (\pi_{\phi_2} \circ \alpha_g^{-1})$, which implies $\pi_{\varPhi_1}|_{\A} \cong \pi_{\varPhi_2}|_{\A}$.
\end{proof}

For a $C^*$-inclusion $\A \subseteq \B$, we denote by $PS(\A \uparrow \B)$ the pure states on $\A$ which extend uniquely to (necessarily pure) states on $\B$.

\begin{lemma} \label{GNS norming}
Let $(\A,G,\alpha)$ be a discrete $C^*$-dynamical system and $\phi \in PS(\A \uparrow \A \rtimes_r G)$. If $\tilde{\phi} \in PS(\A \rtimes_r G)$ is the unique state extension of $\phi$, then $\pi_{\tilde{\phi}}(\A)$ norms $B(\H_{\tilde{\phi}})$.
\end{lemma}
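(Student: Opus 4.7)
The plan is to show $\pi_{\tilde\phi}(\A)$ is locally cyclic in $B(\H_{\tilde\phi})$; by Theorem~\ref{locally cyclic} this suffices. Writing $\H_g := \overline{\pi_{\tilde\phi}(\A g)\xi_{\tilde\phi}}$, Lemma~\ref{GNS direct sum} gives the orthogonal decomposition $\H_{\tilde\phi} = \bigoplus_{g \in G}\H_g$ with the $\A$-action on $\H_g$ unitarily equivalent to $\pi_\phi \circ \alpha_g^{-1}$, and since $\tilde\phi$ is pure, $\pi_{\tilde\phi}$ is irreducible on $\H_{\tilde\phi}$. The crucial intermediate step is to show these summands are pairwise \emph{inequivalent} as $\A$-representations, i.e., $\pi_\phi \circ \alpha_g^{-1} \not\cong \pi_\phi$ for every $g \ne e$.

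To establish inequivalence, suppose for contradiction there were $g_0 \ne e$ and a unitary $V : \H_\phi \to \H_\phi$ intertwining $\pi_\phi$ with $\pi_\phi \circ \alpha_{g_0}^{-1}$. Using $V$ and the identification of Lemma~\ref{GNS direct sum}, one produces a unit vector $\zeta_{g_0} \in \H_{g_0}$ whose vector state on $\A$ equals $\phi$; then $\eta := \tfrac{1}{\sqrt{2}}(\xi_{\tilde\phi}+\zeta_{g_0})$ is a unit vector whose vector state on $\A \rtimes_r G$ also restricts to $\phi$ on $\A$. A short calculation using the intertwining relation shows that $\langle\pi_{\tilde\phi}(ag_0)\eta,\eta\rangle$ is nonzero for a suitable $a \in \A$ (chosen via Kadison transitivity for the irreducible $\pi_\phi$), while $\tilde\phi(ag_0) = 0$. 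This produces a second state extension of $\phi$, contradicting the hypothesis $\phi \in PS(\A \uparrow \A\rtimes_r G)$.

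With pairwise inequivalent summands in hand, fix $\xi_1,\ldots,\xi_n \in \H_{\tilde\phi}$ and $\eps > 0$, and approximate each $\xi_i$ within $\eps$ by its orthogonal projection $\xi_i^{(F)}$ onto $\bigoplus_{g \in F}\H_g$ for a common finite $F \subseteq G$. Set $\zeta := \sum_{g \in F}\pi_{\tilde\phi}(g)\xi_{\tilde\phi}$, whose $\H_g$-component is nonzero for each $g \in F$. Since $\{\pi_\phi \circ \alpha_g^{-1} : g \in F\}$ are pairwise inequivalent irreducible representations of $\A$, Kadison's transitivity theorem (in its extended form for finitely many pairwise inequivalent irreducibles) produces, for each $i$, an element $a_i \in \A$ with $\pi_{\tilde\phi}(a_i)\zeta = \xi_i^{(F)}$; hence $\|\pi_{\tilde\phi}(a_i)\zeta - \xi_i\| < \eps$. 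This establishes local cyclicity, so Theorem~\ref{locally cyclic} yields the desired norming. The main obstacle is the inequivalence step, where one must explicitly construct the alternative extension to contradict uniqueness.
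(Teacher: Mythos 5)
Your proposal is correct and follows the same skeleton as the paper's proof: decompose $\H_{\tilde\phi}$ via Lemma~\ref{GNS direct sum}, show the summand representations $\{\pi_\phi \circ \alpha_g^{-1}\}_{g \in G}$ are pairwise disjoint, deduce local cyclicity, and invoke Theorem~\ref{locally cyclic}. The differences are in how two sub-steps are handled. For the disjointness, the paper simply cites \cite[Theorem 2.4]{Zarikian2019b} (cf.\ \cite[Theorem 2]{AkemannWeaver2004}), whereas you reprove it by constructing a second state extension of $\phi$ from a hypothetical intertwiner; this makes the argument self-contained at the cost of length. One loose end in your sketch: when $g_0^2 = e$ the functional $x \mapsto \langle \pi_{\tilde\phi}(x)\eta,\eta\rangle$ evaluated at $ag_0$ picks up \emph{two} cross terms, $\langle\pi(ag_0)\xi_{\tilde\phi},\zeta_{g_0}\rangle$ and $\langle\pi(ag_0)\zeta_{g_0},\xi_{\tilde\phi}\rangle$, which could conceivably cancel; this is repaired by also considering $\eta' = \tfrac{1}{\sqrt2}(\xi_{\tilde\phi} + i\zeta_{g_0})$, since both vector states restrict to $\phi$ and they cannot both agree with $\tilde\phi$ on $\A g_0$. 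For the final step, the paper passes to the double commutant via \cite[Lemma 2.3(ii)]{PopSinclairSmith2000} and observes that $\bigoplus_g B(\H_\phi)$ is locally cyclic, whereas you prove local cyclicity of $\pi_{\tilde\phi}(\A)$ itself using the transitivity theorem for finitely many pairwise inequivalent irreducible representations; both routes are valid, yours trading the weak-closure reduction for a stronger (but standard) transitivity input.
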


\begin{proof}
By uniqueness, $\tilde{\phi} = \phi \circ \bbE$, where $\bbE:\A \rtimes_r G \to \A$ is the canonical faithful conditional expectation. By the previous lemma and \cite[Lemma 2.3 (ii)]{PopSinclairSmith2000}, it suffices to show that the inclusion
\[
    \left(\left(\bigoplus_{g \in G} (\pi_\phi \circ \alpha_g^{-1})\right)(\A)\right)'' \subseteq B\left(\bigoplus_{g \in G} \H_\phi\right)
\]
is norming. By \cite[Theorem 2.4]{Zarikian2019b}, $\{\pi_\phi \circ \alpha_g^{-1}: g \in G\}$ is a mutually disjoint family of irreducible representations (see also \cite[Theorem 2]{AkemannWeaver2004}). Thus
\[
    \left(\left(\bigoplus_{g \in G} (\pi_\phi \circ \alpha_g^{-1})\right)(\A)\right)''
    = \bigoplus_{g \in G} ((\pi_\phi \circ \alpha_g^{-1})(\A))''
    = \bigoplus_{g \in G} B(\H_\phi).
\]
Since $\bigoplus_{g \in G} B(\H_\phi) \subseteq B\left(\bigoplus_{g \in G} \H_\phi\right)$ is locally cyclic, the result now follows by an application of Theorem \ref{locally cyclic}.
\end{proof}

\begin{theorem} \label{norming_I}
Let $(\A,G,\alpha)$ be a discrete $C^*$-dynamical system. If $G \curvearrowright \mathcal{\widehat{A}}$ is essentially free (equivalently, if $\A \subseteq \A \rtimes_r G$ has the \textsf{AEP}), then $\A$ norms $\A \rtimes_r G$.
\end{theorem}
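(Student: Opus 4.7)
The strategy is to construct a faithful representation of $\A \rtimes_r G$ in which, on each summand, $\A$ norms $B(\H)$ by Lemma \ref{GNS norming}, and then to pull the resulting norming rows and columns back to $\A$ itself via a $C^*$-quotient lifting argument.

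By Lemma \ref{countable reduction} I may assume $G$ is countable. By Theorem \ref{discrete crossed products}(ii) the hypothesis gives the \textsf{AEP}, so $PS(\A \uparrow \A \rtimes_r G)$ is weak-$*$ dense in $PS(\A)$, and each such $\phi$ has unique extension $\phi \circ \bbE$. Form
\[
    \pi := \bigoplus_{\phi \in PS(\A \uparrow \A \rtimes_r G)} \pi_{\phi \circ \bbE} \quad \text{on} \quad \H := \bigoplus_\phi \H_{\phi \circ \bbE}.
\]
Then $\pi$ is faithful on $\A \rtimes_r G$: if $\pi(x) = 0$ then $\phi(\bbE(x^* x)) = 0$ for a weak-$*$ dense set of $\phi \in PS(\A)$, forcing $\bbE(x^* x) = 0$ and then $x = 0$ by faithfulness of $\bbE$. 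Moreover $\pi|_\A$ is isometric, since by Lemma \ref{GNS direct sum} each $\pi_{\phi \circ \bbE}|_\A$ contains $\pi_\phi$ as a subrepresentation, and the pure states in our dense family separate points of $\A$.

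Fix $X \in M_n(\A \rtimes_r G)$ and $\eps > 0$. Because $\pi$ is isometric on $\A \rtimes_r G$ and block-diagonal,
\[
    \|X\| = \|\pi^{(n)}(X)\| = \sup_\phi \|\pi_{\phi \circ \bbE}^{(n)}(X)\|,
\]
so there is some $\phi$ with $\|\pi_{\phi \circ \bbE}^{(n)}(X)\| > \|X\| - \eps$. Lemma \ref{GNS norming} applied to this $\phi$ furnishes $R_\phi \in \Ball(\ROW_n(\pi_{\phi \circ \bbE}(\A)))$ and $C_\phi \in \Ball(\COL_n(\pi_{\phi \circ \bbE}(\A)))$ with $\|R_\phi \pi^{(n)}_{\phi \circ \bbE}(X) C_\phi\| > \|X\| - 2\eps$. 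Because $M_{1,n}(\A/J) \cong M_{1,n}(\A)/M_{1,n}(J)$ isometrically for any closed two-sided ideal $J$ of $\A$, applied to $J = \ker(\pi_{\phi \circ \bbE}|_\A)$, I can lift $R_\phi$ and $C_\phi$ to $\tilde R_0 \in \ROW_n(\A)$ and $\tilde C_0 \in \COL_n(\A)$ of norm $\leq 1 + \eps$. Setting $\tilde R := \tilde R_0/(1+\eps)$ and $\tilde C := \tilde C_0/(1+\eps)$ gives elements of the required unit balls with
\[
    \|\tilde R X \tilde C\| \geq \|\pi_{\phi \circ \bbE}(\tilde R X \tilde C)\| = (1+\eps)^{-2}\|R_\phi \pi^{(n)}_{\phi \circ \bbE}(X) C_\phi\| > (1+\eps)^{-2}(\|X\| - 2\eps).
\]
Sending $\eps \to 0$ gives $\sup\|\tilde R X \tilde C\| \geq \|X\|$, and the reverse inequality is immediate.

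The main obstacle I anticipate is the lifting step. While $\pi|_\A$ is globally isometric, each individual $\pi_{\phi \circ \bbE}|_\A$ may have a large kernel, so unit-ball rows in $M_{1,n}(\pi_{\phi \circ \bbE}(\A))$ do not automatically pull back to unit-ball rows in $M_{1,n}(\A)$. The standard completely-quotient property of $C^*$-epimorphisms produces lifts of norm within $\eps$, which suffices after rescaling; the fact that these lifts act arbitrarily on the other $\phi'$-summands of $\pi$ is harmless, since norming only requires a lower bound coming from the chosen $\phi$-block.
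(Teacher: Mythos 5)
Your proposal is correct, and while it shares the paper's overall architecture (summing the GNS representations $\pi_{\phi\circ\bbE}$ over the uniquely-extending pure states and invoking Lemma~\ref{GNS norming} on each summand), the way you pass from block-wise norming back to norming by $\A$ is genuinely different. The paper first prunes to a maximal mutually disjoint subfamily $\F$ (via Lemma~\ref{equiv or disjoint}), which lets it identify $\tilde\pi(\A)''$ with the direct sum $\bigoplus_{\phi\in\F}(\pi_{\tilde\phi}(\A))''$; it then cites \cite[Corollary 6.5]{PittsZarikian2015} for norming of direct sums and \cite[Lemma 2.3(ii)]{PopSinclairSmith2000} to descend from $\tilde\pi(\A)''$ to $\A$, and it obtains faithfulness of $\tilde\pi$ on the crossed product from essentiality of the inclusion \cite[Theorem 3.5 and Proposition 3.3]{PittsZarikian2015}. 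You instead keep every summand, prove faithfulness directly and more elementarily (weak-$*$ density of the uniquely-extending pure states forces $\bbE(x^*x)=0$, then faithfulness of $\bbE$ finishes), and replace the disjointness/double-commutant machinery with a localization-and-lifting argument: pick a single block nearly attaining $\|X\|$, norm there by $\pi_{\tilde\phi}(\A)$, and lift the row and column through the complete quotient map $M_{1,n}(\A)\to M_{1,n}(\A/\ker(\pi_{\tilde\phi}|_\A))$ at the cost of a factor $(1+\eps)^{-2}$. The lifting step is legitimate ($M_{1,n}(\A)/M_{1,n}(J)\cong M_{1,n}(\A/J)$ isometrically, as the row spaces are corners of $M_n(\A)/M_n(J)\cong M_n(\A/J)$), and since norming only needs a lower bound from the chosen block, the behavior of the lifts on other summands is indeed irrelevant. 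Your route is more self-contained, avoiding Lemma~\ref{equiv or disjoint} and the cited external results entirely; the paper's route buys the stronger conclusion that $\tilde\pi(\A)''$ norms all of $\bigoplus_{\phi\in\F}B(\H_{\tilde\phi})$ and reuses established direct-sum norming machinery. (The appeal to Lemma~\ref{countable reduction} at the outset is harmless but unused.)
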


\begin{proof}
By Theorem \ref{discrete crossed products} (ii), $\A \subseteq \A \rtimes_r G$ has the almost extension property, therefore a unique pseudo-expectation, necessarily the canonical faithful conditional expectation $\bbE:\A \rtimes_r G \to \A$. For each $\phi \in PS(\A \uparrow \A \rtimes_r G)$, let $\tilde{\phi} = \phi \circ \bbE \in PS(\A \rtimes_r G)$ be its unique state extension. Choose $\F \subseteq PS(\A \uparrow \A \rtimes_r G)$ such that $\{\pi_{\tilde{\phi}}|_{\A}: \phi \in \F\}$ is a maximal mutually disjoint family, and define
\[
  \tilde{\pi} := \bigoplus_{\phi \in \F} \pi_{\tilde{\phi}}: \A \rtimes_r G \to \bigoplus_{\phi \in \F} B(\H_{\tilde{\phi}}).  \]

We claim that $\tilde{\pi}$ is faithful on $\A$. Indeed, suppose $a \in \A_+\setminus\{0\}$ and $\tilde{\pi}(a) = 0$. Since $PS(\A \uparrow \A \rtimes_r G)$ is weak-$*$ dense in $PS(\A)$, there exists $\rho \in PS(\A \uparrow \A \rtimes_r G)$ such that $\rho(a) \neq 0$. Then $\pi_\rho(a) \neq 0$, which implies $\pi_{\tilde{\rho}}(a) \neq 0$. By Lemma \ref{equiv or disjoint}, $\pi_{\tilde{\rho}}$ and $\pi_{\tilde{\phi}}$ are disjoint for all $\phi \in \F$, which contradicts the maximality of $\F$.  Thus the claim holds.

By \cite[Theorem 3.5]{PittsZarikian2015}, the inclusion $\A \subseteq \A \rtimes_r G$ is essential. Thus, by \cite[Proposition 3.3]{PittsZarikian2015}, $\tilde{\pi}$ is faithful on $\A \rtimes_r G$. By \cite[Lemma 2.3 (ii)]{PopSinclairSmith2000}, to show that $\A$ norms $\A \rtimes_r G$, it suffices to show that $\tilde{\pi}(\A)''$ norms $\tilde{\pi}(\A \rtimes_r G)$. In fact, by Lemma \ref{GNS norming} and \cite[Corollary 6.5]{PittsZarikian2015}, $\tilde{\pi}(\A)''$ norms $\bigoplus_{\phi \in \F} B(\H_{\tilde{\phi}})$, since \[
    \tilde{\pi}(\A)'' = \left(\left(\bigoplus_{\phi \in \F} \pi_{\tilde{\phi}}\right)(\A)\right)'' = \bigoplus_{\phi \in \F} (\pi_{\tilde{\phi}}(\A))''.
\]
\end{proof}

As a consequence of this result, we get:

\begin{corollary}\label{separable}
Let $G \curvearrowright \A$ be an action of a discrete group on a separable unital $C^*$-algebra $\A$. If $\A \subseteq \A \rtimes_r G$ has a unique pseudo-expectation, then $\A$ norms $\A \rtimes_r G$.
\end{corollary}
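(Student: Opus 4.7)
The plan is to reduce to the countable case via Lemma \ref{countable reduction}, and then invoke Theorem \ref{norming_I} after upgrading ``unique pseudo-expectation'' to ``\textsf{AEP}'' using separability of the crossed product.

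First I would replace $G$ by a finitely generated (hence countable) subgroup. By Lemma \ref{countable reduction}, it suffices to show that $\A \subseteq \A \rtimes_r H$ is norming for every finitely generated subgroup $H \subseteq G$. For any such $H$, the inclusion $\A \subseteq \A \rtimes_r H$ also has a unique pseudo-expectation: by Theorem \ref{discrete crossed products}(iii), this property is equivalent to Kishimoto's condition on the action, and Kishimoto's condition is defined pointwise on $G \setminus \{e\}$, so it automatically descends to any subgroup.

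Next I would use separability to pass from aperiodicity to the almost extension property. Since $\A$ is separable and $H$ is countable, the reduced crossed product $\A \rtimes_r H$ is separable (the finite sums $\sum_h a_h h$ with $a_h$ drawn from a countable dense subset of $\A$ and $h \in H$ form a countable dense $*$-subalgebra). By Theorem \ref{discrete crossed products}(iii), the inclusion $\A \subseteq \A \rtimes_r H$ is aperiodic, and by the Kwa\'sniewski--Meyer equivalence recorded in Figure \ref{fig1}, aperiodicity for inclusions with separable ambient algebra is equivalent to the \textsf{AEP}. Thus $\A \subseteq \A \rtimes_r H$ has the \textsf{AEP}.

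With the \textsf{AEP} in hand, Theorem \ref{norming_I} gives that $\A$ norms $\A \rtimes_r H$, and one more appeal to Lemma \ref{countable reduction} yields that $\A$ norms $\A \rtimes_r G$. The main obstacle, such as it is, is simply verifying that the hypothesis (unique pseudo-expectation) survives the two reduction steps: first passing to the subgroup $H$, which is immediate once the property is translated into Kishimoto's condition, and second bridging the gap between !\textsf{PsExp} and \textsf{AEP}, which is exactly what separability of $\A \rtimes_r H$ is needed for.
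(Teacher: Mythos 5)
Your proposal is correct and follows essentially the same route as the paper: reduce to a finitely generated subgroup via Lemma~\ref{countable reduction}, use separability of $\A \rtimes_r H$ together with the Kwa\'sniewski--Meyer equivalence to upgrade the unique pseudo-expectation property to the \textsf{AEP}, and then apply Theorem~\ref{norming_I}. The only cosmetic difference is that you justify the descent of the hypothesis to the subgroup via the pointwise nature of Kishimoto's condition (Theorem~\ref{discrete crossed products}(iii)), where the paper instead cites a general permanence result from Pitts--Zarikian; both justifications are valid.
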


\begin{proof}
By Lemma \ref{countable reduction} and \cite[Proposition 2.6]{PittsZarikian2015}, we may assume that $G$ is finitely-generated. Then $\A \rtimes_r G$ is separable, and so $\A \subseteq \A \rtimes_r G$ satisfies the \textsf{AEP} by \cite[Theorem 5.5]{KwasniewskiMeyer2022} (see also \cite[Lemma 2.17]{KwasniewskiMeyer2018}, which in turn references the proof of \cite[Proposition 4.4]{OlesenPedersen1980}). Thus $\A$ norms $\A \rtimes_r G$ by Theorem \ref{norming_I}.
\end{proof}

Also we are able to give a different proof of the following result, which follows from \cite[Theorems 6.15 and 8.2]{Pitts2017}. Note that this is not covered by the previous corollary, since we do not assume that $X$ is metrizable.

\begin{corollary}[\cite{Pitts2017}]\label{1hom}
Let $G \curvearrowright X$ be an action of a discrete group on a compact Hausdorff space. If $C(X) \subseteq C(X) \rtimes_r G$ has a unique pseudo-expectation (equivalently, if $G \curvearrowright X$ is topologically free), then $C(X)$ norms $C(X) \rtimes_r G$.
\end{corollary}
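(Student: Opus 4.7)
The plan is to reduce the corollary to Theorem~\ref{norming_I} by passing to finitely generated subgroups and invoking the Baire-category-based remark that topological freeness upgrades to essential freeness whenever the acting group is countable.

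First, I would dispose of the parenthetical equivalence. Since $\A = C(X)$ is abelian, Theorem~\ref{equivalences}(i) tells us that topological freeness of $G \curvearrowright X$ is equivalent to $G \curvearrowright C(X)$ satisfying Kishimoto's condition, and by Theorem~\ref{discrete crossed products}(iii) the latter is equivalent to $C(X) \subseteq C(X) \rtimes_r G$ having a unique pseudo-expectation. So we may assume $G \curvearrowright X$ is topologically free and we must show $C(X)$ norms $C(X) \rtimes_r G$.

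Next, by Lemma~\ref{countable reduction}, it suffices to show that $C(X)$ norms $C(X) \rtimes_r H$ for every finitely generated subgroup $H \subseteq G$. Fix such an $H$; then $H$ is countable and its action on $X$ is again topologically free (the condition passes to subgroups, since $\Fix(h)^\circ = \emptyset$ is a pointwise condition). Because $X$ is compact Hausdorff and $H$ is countable, Remark~\ref{tfr->esfr} applies: the set
\[
    D := \bigcap_{h \in H\setminus\{e\}} \Fix(h)^c
\]
is a dense $H$-invariant subset of $X$ on which $H$ acts freely, so $H \curvearrowright X$ is essentially free. This is the key step: metrizability of $X$ is not needed, only the Baire category theorem for the compact Hausdorff space $X$ against a countable union of closed nowhere-dense sets.

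Finally, with $H \curvearrowright \widehat{C(X)} = X$ essentially free, Theorem~\ref{norming_I} directly yields that $C(X)$ norms $C(X) \rtimes_r H$. Since this holds for every finitely generated $H \subseteq G$, Lemma~\ref{countable reduction} completes the proof. I do not anticipate a serious obstacle here: the work is really just an observation that, unlike Corollary~\ref{separable}, which needed separability of $\B$ to invoke the Kwa\'sniewski--Meyer equivalence of aperiodicity with the \textsf{AEP}, in the abelian case the \textsf{AEP} is automatic from topological freeness plus countability via Baire category, so no separability hypothesis on $X$ is required.
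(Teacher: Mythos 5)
Your proposal is correct and follows essentially the same route as the paper's own proof: reduce to finitely generated (hence countable) subgroups via Lemma~\ref{countable reduction}, upgrade topological freeness to essential freeness via the Baire-category argument of Remark~\ref{tfr->esfr} (which needs only that $X$ is compact Hausdorff, not metrizable), and conclude with Theorem~\ref{norming_I}. The only difference is that you spell out the parenthetical equivalence and the (trivial) passage of topological freeness to subgroups, which the paper leaves implicit.
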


\begin{proof}
Assume that $G \curvearrowright X$ is topologically free. By Lemma \ref{countable reduction}, we may assume that $G$ is finitely-generated. By Remark~\ref{tfr->esfr}, $G \curvearrowright X$ is essentially free, and so $C(X)$ norms $C(X) \rtimes_r G$ by Theorem \ref{norming_I}.
\end{proof}

Corollary~\ref{1hom} can be extended to the setting where $C(X)$ is replaced by an $n$-homogeneous $C^*$-algebra $\A$ (that is, a $C^*$-algebra whose non-zero irreducible representations are all $n$-dimensional), and the remainder of the section is devoted to doing this. We begin with some notation. 

For vectors $\xi, \eta$ in the Hilbert space $\H$, we use $\xi\eta^*$ for the rank one operator $\H \ni x \mapsto \innerprod{x,\eta}\xi$. Then $(\xi_1\eta_1^*)(\xi_2\eta_2^*)=\innerprod{\xi_2,\eta_1}\xi_1\eta_2^*$, the adjoint of $(\xi\eta^*)$ is $\eta\xi^*$, and $\norm{\xi\eta^*}=\norm{\xi}\, \norm{\eta}$.  Also $\ker(\xi\eta^*)=\{\eta\}^\perp$ and $\ran(\xi\eta^*)=\bbC \xi$. It follows that $\xi\eta^*=\xi_1\eta_1^*$ if and only if there are scalars $s, t$ such that
$\xi_1=s \xi$, $\eta_1=t\eta$ and $s\overline t=1$. 

We omit the proof of the following well-known fact.

\begin{lemma}[cf.\ {\cite[Lemma~8.7.4]{PedersenBook}}] \label{autobh} 
Suppose $\H$ is a Hilbert space and $\alpha\in\aut(\bh)$. There exists a unitary $U\in\bh$, which is unique up to a scalar $\lambda\in \bbT$, such that $\alpha(T)=UTU^*$. Furthermore, if $\zeta\in \H$ is a fixed unit vector and $\xi\in \H$ is a unit vector chosen so that $\alpha(\zeta\zeta^*)=\xi\xi^*$, the unitary $U$ may be taken to be 
\begin{equation}\label{autodef}
    U\eta=\alpha(\eta\zeta^*)\xi, \quad \eta\in \H.
\end{equation} 
\end{lemma}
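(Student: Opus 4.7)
The plan is to verify that the formula~\eqref{autodef} defines a unitary implementing $\alpha$, and then to handle uniqueness by the standard commutant argument. First I would note that, being a $*$-automorphism of $\bh$, the map $\alpha$ sends the minimal projection $\zeta\zeta^*$ to another minimal projection, which must have the form $\xi\xi^*$ for some unit vector $\xi$ (unique up to a unimodular scalar, as recorded in the preamble to the lemma). This fixes the choice of $\xi$ and gives meaning to~\eqref{autodef}.

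With $U$ defined by~\eqref{autodef}, linearity is automatic from linearity of $\alpha$ and of $\eta \mapsto \eta\zeta^*$. Isometry would follow from the short computation
\[
\norm{U\eta}^2 = \innerprod{\xi, \alpha(\zeta\eta^*\eta\zeta^*)\xi} = \norm{\eta}^2\innerprod{\xi, \alpha(\zeta\zeta^*)\xi} = \norm{\eta}^2,
\]
using $\alpha(\zeta\zeta^*) = \xi\xi^*$. For surjectivity, I would run the analogous construction for $\alpha^{-1}$ with the roles of $\zeta,\xi$ swapped, producing $V\eta := \alpha^{-1}(\eta\xi^*)\zeta$, and verify $UV = VU = I$ by direct computations using multiplicativity of $\alpha$ and $\alpha^{-1}$ together with $\alpha^{-1}(\xi\xi^*) = \zeta\zeta^*$. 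Next I would check the intertwining $UT = \alpha(T)U$ for rank-one $T = \eta_1\eta_2^*$: applying both sides to an arbitrary $\eta_3 \in \H$ reduces, via multiplicativity of $\alpha$ and $\eta_1\eta_2^*\eta_3 = \innerprod{\eta_3,\eta_2}\eta_1$, to the identity $\innerprod{\eta_3,\eta_2}\alpha(\eta_1\zeta^*)\xi$ on both sides.

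The hard part is extending $\alpha(T) = UTU^*$ from rank-one $T$ to arbitrary $T \in \bh$: the rank-one operators are only norm-dense in the compact operators $\K(\H)$, not in $\bh$, so a straightforward approximation argument will not suffice. To sidestep topological considerations, I would introduce $\beta(T) := U^*\alpha(T)U$, a $*$-automorphism of $\bh$ that by the previous step fixes every rank-one operator, in particular every rank-one projection. For an arbitrary $T \in \bh$ and any unit vector $\eta$, the algebraic identity
\[
\eta\eta^* T \eta\eta^* = \innerprod{T\eta,\eta}\,\eta\eta^*,
\]
combined with $\beta(\eta\eta^*) = \eta\eta^*$, would force $\innerprod{T\eta,\eta} = \innerprod{\beta(T)\eta,\eta}$ for every unit vector $\eta$; polarization then gives $T = \beta(T)$, so $\beta = \id$ and $\alpha(T) = UTU^*$ for all $T \in \bh$.

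Finally, uniqueness is immediate: if $V \in \bh$ is any other unitary implementing $\alpha$, then $U^*V$ commutes with every element of $\bh$, hence lies in $\bh' = \bbC I$; being unitary, $U^*V = \lambda I$ for some $\lambda \in \bbT$, giving $V = \bar\lambda U$.
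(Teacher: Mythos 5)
Your proof is correct. Note that the paper deliberately omits a proof of this lemma (``We omit the proof of the following well-known fact''), so the only implicit guidance is the construction already encoded in \eqref{autodef}, which is exactly what you follow: define $U$ by the formula, check isometry and surjectivity, verify the intertwining, and get uniqueness from $\bh'=\bbC I$. One substantive remark: the step you single out as ``the hard part'' --- extending $\alpha(T)=UTU^*$ from rank-one $T$ to all of $\bh$ --- is not actually a separate step, because the intertwining computation works for arbitrary $T$ in one line. Indeed, $(T\eta)\zeta^*=T(\eta\zeta^*)$ as operators, so
\[
U T\eta=\alpha\bigl((T\eta)\zeta^*\bigr)\xi=\alpha(T)\,\alpha(\eta\zeta^*)\xi=\alpha(T)\,U\eta,
\]
giving $UT=\alpha(T)U$ for every $T\in\bh$ directly, whence $\alpha(T)=UTU^*$ by unitarity of $U$; no density argument and no detour through $\beta(T):=U^*\alpha(T)U$ and polarization is needed. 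Your longer route (showing $\beta$ fixes rank-one projections and then using $\eta\eta^*T\eta\eta^*=\innerprod{T\eta,\eta}\,\eta\eta^*$) is nevertheless valid. The remaining pieces --- existence of $\xi$ because automorphisms preserve minimal projections, the isometry computation, surjectivity via the analogous map built from $\alpha^{-1}$, and uniqueness up to a unimodular scalar --- are all fine; the only cosmetic slip is that $U^*V=\lambda I$ yields $V=\lambda U$ rather than $\bar\lambda U$, which does not affect the conclusion.
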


The next proposition follows from the discussion by Kadison and Ringrose in~\cite[Example~(d), p.\ 55]{KadisonRingrose1967}, which was elaborated by Smith in~\cite[Theorem~3.5]{SmithMiSoo1970}.  We provide a proof for convenience.

\begin{proposition} \label{conU} 
Let $X$ be a topological space, fix $z\in X$, and suppose $x\mapsto \alpha_x$ is a point-norm continuous mapping of $X$ into $\aut(\bh)$ (i.e.\ for every $T\in\bh$ and $x\in X$, $\lim_{y\rightarrow x}\norm{\alpha_x(T)-\alpha_y(T)}=0$). Then there exists an open neighborhood $N$ of $z$ and a strongly continuous mapping $N\ni x\mapsto U_x\in \U(\bh)$ such that for every $x\in N$ and $T\in\bh$, \[\alpha_x(T)=U_xTU_x^*.\]
\end{proposition}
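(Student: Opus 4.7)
The plan is to apply Lemma~\ref{autobh} pointwise on a suitable neighborhood of $z$ and assemble the resulting implementing unitaries into a strongly continuous family. The only genuine obstruction is the phase ambiguity inherent in Lemma~\ref{autobh}: each $\alpha_x$ is implemented only up to a unitary scalar in $\bbT$, so the work lies in making a locally coherent choice of phase. Via the formula~\eqref{autodef}, this reduces to producing unit vectors $\xi_x\in\H$ satisfying $\xi_x\xi_x^*=\alpha_x(\zeta\zeta^*)$ (for a fixed reference vector $\zeta$) that depend norm-continuously on $x$.

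To this end, I would fix any unit vector $\zeta\in\H$ and choose a unit vector $\xi_z\in\H$ with $\alpha_z(\zeta\zeta^*)=\xi_z\xi_z^*$. Writing $p_x:=\alpha_x(\zeta\zeta^*)$, point-norm continuity produces an open neighborhood
\[N:=\{x\in X:\|p_x-p_z\|<1\}\]
of $z$. On $N$, the identity $p_z\xi_z=\xi_z$ together with $\|p_x-p_z\|<1$ forces $p_x\xi_z\neq 0$, so
\[\xi_x:=\frac{p_x\xi_z}{\|p_x\xi_z\|}\]
is a well-defined unit vector lying in the range of the rank-one projection $p_x$; hence $\xi_x\xi_x^*=p_x$. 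Since $x\mapsto p_x\xi_z$ is norm-continuous and its norm is bounded away from $0$ on $N$, the map $N\ni x\mapsto \xi_x\in\H$ is norm-continuous.

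With this in hand, I would define $U_x\eta:=\alpha_x(\eta\zeta^*)\xi_x$ for $x\in N$ and $\eta\in\H$, as prescribed by~\eqref{autodef}. Lemma~\ref{autobh} immediately yields that each $U_x$ is a unitary with $\alpha_x(T)=U_xTU_x^*$ for all $T\in\bh$. Strong continuity then follows from the decomposition
\[U_x\eta-U_y\eta=\bigl(\alpha_x(\eta\zeta^*)-\alpha_y(\eta\zeta^*)\bigr)\xi_x+\alpha_y(\eta\zeta^*)(\xi_x-\xi_y):\]
the first summand is bounded in norm by $\|\alpha_x(\eta\zeta^*)-\alpha_y(\eta\zeta^*)\|\to 0$ by point-norm continuity, and the second by $\|\eta\|\,\|\xi_x-\xi_y\|\to 0$ by the previous paragraph. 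The main conceptual step is the continuous phase selection; the remaining verifications are routine estimates.
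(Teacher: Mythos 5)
Your proof is correct and follows essentially the same route as the paper's: both resolve the phase ambiguity in Lemma~\ref{autobh} by taking $\xi_x$ to be the normalized image of a fixed reference vector under the rank-one projection $\alpha_x(\zeta\zeta^*)$, on a neighborhood of $z$ where that image is nonzero, and then verify strong continuity of $U_x\eta=\alpha_x(\eta\zeta^*)\xi_x$ via the same two-term estimate. (The paper phrases the normalization as division by the positive scalar $\innerprod{e,\xi_x}$ after reducing to $\alpha_z=\id$, but this is the same construction.)
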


\begin{proof} Without loss of generality, we may assume
  $\alpha_z=\id$.  Fix a unit vector $e\in\H$ and a real number
  $\delta$ with $0<\delta<1$.  The map $x\mapsto \alpha_x(ee^*)e$ is a
  continuous mapping of $X$ into $\H$, so
\[h(x):=\innerprod{\alpha_x(ee^*)e,e}^{1/2}\] is a continuous function
of $X$ into the unit interval.  In particular, 
$N=h^{-1}(\delta,1]$ is an open subset of $X$ containing $z$.

For each $x\in N$, there is a unit vector $\eta_x\in \H$ such that
$\alpha_x(ee^*)=\eta_x\eta_x^*$. Note that
$h(x)=|\innerprod{e,\eta_x}|$, so  there is a unique $\lambda_x\in\bbT$ such that
$\lambda_x\innerprod{\eta_x,e}>0$.  Put
\[\xi_x:=\lambda_x \eta_x,\] so that $\alpha_x(ee^*)=\xi_x\xi_x^*$ and
$\innerprod{\xi_x,e}>0$.
Then
\[\norm{\alpha_x(ee^*)e}^2=\norm{\xi_x\xi_x^*e}^2=|\innerprod{e,\xi_x}|^2=
  \innerprod{e,\xi_x}^2,\] whence
$N\ni x\mapsto \innerprod{e,\xi_x}$ is a continuous function.  It
follows that
\[N\ni x\mapsto \xi_x=\frac{1}{\innerprod{e,\xi_x}} \alpha_x(ee^*) e\] is a
  continuous function of $N$ into the unit sphere of $\H$.

For $x\in N$ and $f\in \H$, let
\[U_xf=\alpha_x(fe^*)\xi_x.\] By Lemma~\ref{autobh}, $U_x$ is a unitary such that
$\alpha_x=\Ad_{U_x}$.

To show strong continuity of $x\mapsto U_x$ on $N$,
choose a unit vector $f\in \H$, $x\in N$, and $\eps>0$.  Let $\O \subseteq N$ be a relatively open neighborhood of $x$
such that for $y\in \O$,
\[\max\{\norm{\alpha_x(fe^*)-\alpha_y(fe^*)}, \norm{\xi_x-\xi_y}\}<\eps/2.\]   Then for any $y\in \O$,

\begin{eqnarray*}
  \norm{(U_x-U_y)f}&=&\norm{\alpha_x(fe^*)\xi_x-\alpha_y(fe^*)\xi_y}\\
&\leq&
    \norm{\alpha_x(fe^*)(\xi_x-\xi_y)}+\norm{\alpha_x(fe^*)\xi_y-\alpha_y(fe^*)\xi_y}\\
  &\leq& \norm{\xi_x-\xi_y}+ \norm{\alpha_x(fe^*)-\alpha_y(fe^*)} <\eps.
\end{eqnarray*}
\end{proof}

\begin{proposition} \label{tf1} Suppose $X$ is a compact Hausdorff space, $\H$
  is a finite-dimensional Hilbert space, and $\alpha$
  is an automorphism of $\A:=C(X)\otimes\bh$ such that $\hat\alpha$ does not act topologically freely on $\mathcal{\widehat{A}}$.  Then $\alpha$ has a non-zero dependent element.
\end{proposition}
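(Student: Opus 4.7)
The plan is to exploit the non-empty open fixed-point set of $\hat\alpha$ by locally implementing $\alpha$ as conjugation by a continuous field of unitaries (via Proposition~\ref{conU}), and to build the dependent element as a bump-supported section of this field.

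Identify $\A = C(X) \otimes \bh$ with $C(X, \bh)$ and $\hat{\A}$ with $X$ via the evaluation representations $\pi_x(f) := f(x)$. Let $\varphi := \hat\alpha \in \Homeo(X)$ and $V := \Int(\Fix(\varphi))$; by hypothesis $V$ is non-empty. For each $x \in V$, $\pi_x \circ \alpha^{-1} \cong \pi_x$, and since $\pi_x$ is irreducible the equivalence is implemented (uniquely up to $\bbT$) by a unitary $U_x \in \bh$, so that
\[
    \alpha(f)(x) = U_x f(x) U_x^*, \qquad f \in \A, \ x \in V.
\]
Setting $\gamma_x := \Ad(U_x) \in \Aut(\bh)$, one has $\gamma_x(b) = \alpha(1 \otimes b)(x)$ for every $b \in \bh$, whence $V \ni x \mapsto \gamma_x$ is point-norm continuous.

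The correct ansatz for a dependent element is determined by the following necessity argument: if $T \in \A$ satisfies $\alpha(a) T = T a$ for all $a \in \A$, then for each $x \in V$, varying $a$ so that $a(x)$ ranges over $\bh$ forces $U_x b U_x^* T(x) = T(x) b$ for every $b \in \bh$, i.e., $U_x^* T(x) \in \bh' \cap \bh = \bbC I$. Thus $T(x) = \lambda(x) U_x$ for some scalar function $\lambda(x)$, and we need only choose a small $N \subseteq V$ on which $x \mapsto U_x$ can be taken continuous, together with a non-zero bump $\lambda$ supported in $N$. Fixing $z \in V$, Proposition~\ref{conU} applied to $V \ni x \mapsto \gamma_x$ delivers an open neighborhood $N \subseteq V$ of $z$ and a strongly continuous unitary-valued map $N \ni x \mapsto U_x$ with $\gamma_x = \Ad(U_x)$; strong continuity coincides with norm continuity since $\dim \H < \infty$.

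By Urysohn's lemma (applicable since $X$ is compact Hausdorff), pick $\lambda \in C(X, [0,1])$ with $\lambda(z) = 1$ and support contained in $N$, and set
\[
    T(x) := \begin{cases} \lambda(x) U_x, & x \in N, \\ 0, & x \in X \setminus N. \end{cases}
\]
Then $T \in C(X, \bh) = \A$ is continuous, $T(z) = U_z \neq 0$, and for $a \in \A$ and $x \in N$,
\[
    \alpha(a)(x) T(x) = U_x a(x) U_x^* \lambda(x) U_x = \lambda(x) U_x a(x) = T(x) a(x),
\]
while at $x \notin N$ both sides vanish. Hence $\alpha(a) T = T a$ for every $a \in \A$, yielding the desired non-zero dependent element. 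The main obstacle is the continuous local selection of $x \mapsto U_x$ on $V$, which is precisely what Proposition~\ref{conU} supplies; its hypothesis is the point-norm continuity of $\gamma_x$ verified above, and finite-dimensionality of $\H$ enters to identify $\hat{\A}$ with $X$ and to upgrade strong to norm continuity.
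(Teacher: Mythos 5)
Your proof is correct and follows essentially the same route as the paper: identify the non-empty open interior of the fixed-point set, implement $\alpha$ there pointwise by unitaries, invoke Proposition~\ref{conU} to get a continuous local selection $x\mapsto U_x$, and multiply by a bump function supported in that neighborhood to produce the dependent element. The only differences are cosmetic (your motivating ``necessity'' argument for the ansatz, and Urysohn in place of the paper's choice of a non-zero element of $C_0(N)$).
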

\begin{proof}  View $\A$ as the set of continuous functions from $X$
  into $\bh$.  For any subset $N\subseteq X$, write $\A_N:=\{f|_N:
  f\in \A\}$.

  Since $\hat\alpha$ is not
topologically free, there exists an open set $N_0\subseteq \mathcal{\widehat{A}}=X$ such
that the action of $\hat\alpha$ on $\mathcal{\widehat{A}}_{N_0}$ is the identity on $\mathcal{\widehat{A}}_{N_0}$.
This means that $\alpha$ fixes the center of $\A_{N_0}$.  So for each
$x\in N_0$, there exists $\alpha_x\in \aut(\bh)$ such that for $f\in \A$,
\[\alpha(f)(x)=\alpha_x(f(x)).\]    Let $T\in \bh$.
Letting $f_T\in \A$ be the (constant) function $X\ni x\mapsto T$, we find $\alpha_x(T)=\alpha(f_T)(x)$, so that the map $N_0\ni x\mapsto \alpha_x(T)$ is norm-continuous.

Let $z\in N_0$.  Since $\dim\H<\infty$, an application of Proposition~\ref{conU} yields an open
subset $N\subseteq N_0$ with $z\in N$ and a norm-continuous function 
$N\ni x\mapsto U_x\in \U(\bh)$ such that for every $x\in N$,
\[\alpha(f)(x)=U_xf(x)U_x^*.\]  Since $N$ is an open subset of $X$,
there exists a non-zero (scalar-valued) $h\in C_0(N)$.   For $x\in X$, let 
\[d(x):=\begin{cases} h(x)U_x &\text{if $x\in N$;}\\
    0&\text{if $x\in X\setminus N$.}
  \end{cases}
  \]
Then $d\in \A$, and for any $f\in \A$ and $x\in N$,
$\alpha(f)(x)d(x)=U_xf(x)U_x^* U_xh(x)=d(x)f(x)$, and when $x\in
X\setminus N$, we have $\alpha(f)(x)d(x)=0=d(x)f(x)$.  So for $f\in \A$,
\[\alpha(f)d=df,\] whence $d$ is a dependent element for $\alpha$.  This completes
the proof.
\end{proof}
\begin{corollary}\label{nhomcor}
Suppose $\A$ is an $n$-homogeneous \cstaralg\ and $\alpha$
is an automorphism of $\A$ such that $\hat\alpha$ does not act topologically freely on
$\mathcal{\widehat A}$.  Then $\alpha$ has a non-zero dependent element.
\end{corollary}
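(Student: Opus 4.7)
The plan is to localize the argument of Proposition~\ref{tf1}. The key extra ingredient is the Fell--Tomiyama--Takesaki local triviality theorem for $n$-homogeneous $C^*$-algebras: every point of $\widehat{\A}$ admits an open neighborhood $N$ such that the ideal $C_0(N)\cdot\A$ is $*$-isomorphic to $C_0(N)\otimes M_n$.

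Since $\hat\alpha$ is not topologically free, there is a non-empty open $N_0 \subseteq \widehat{\A}$ with $\hat\alpha|_{N_0} = \id$. As in the proof of Proposition~\ref{tf1}, this forces $\alpha$ to fix $C_0(N_0) \subseteq Z(\A)$ pointwise, so the ideal $\J_0 := C_0(N_0)\cdot\A$ is $\alpha$-invariant. Pick a point $z \in N_0$ and shrink to an open neighborhood $N \subseteq N_0$ of $z$ on which the bundle trivializes: write $\J := C_0(N)\cdot\A \cong C_0(N)\otimes M_n$. Under this identification, $\alpha|_{\J}$ is a pointwise family of automorphisms $\alpha_x \in \Aut(M_n)$, and this family is norm-continuous in $x$ (test on elements of the form $\phi\otimes T$ with $\phi \in C_c(N)$). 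Proposition~\ref{conU} then produces an open set $N' \ni z$, $N' \subseteq N$, and a continuous unitary-valued map $N' \ni x \mapsto U_x \in \U(M_n)$ with $\alpha_x = \Ad(U_x)$.

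Now imitate the endgame of Proposition~\ref{tf1}: choose $0 \neq h \in C_c(N')$ and define $d \in \J$ by $d(x) = h(x) U_x$ on $N'$ and $d(x) = 0$ elsewhere. The same computation as in Proposition~\ref{tf1} shows $\alpha(f)\,d = d\,f$ for every $f \in \J$, and clearly $d \neq 0$. To promote this to a dependence relation on all of $\A$, choose a central cutoff $e \in C_0(N) \subseteq Z(\A)$ with $e\,d = d = d\,e$; since $e \in C_0(N_0)$, we have $\alpha(e) = e$. Then for any $a \in \A$, $ae \in \J$ and $\alpha(ae) = \alpha(a)\,e$, so the dependence relation on $\J$ gives
\[
\alpha(a)\,d \;=\; \alpha(a)\,e\,d \;=\; \alpha(ae)\,d \;=\; d\,(ae) \;=\; d\,e\,a \;=\; d\,a,
\]
using centrality of $e$ and $d\,e = d$. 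Hence $d$ is the desired non-zero dependent element for $\alpha$.

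The main obstacle is the passage from the trivializing ideal $\J$ to the global algebra $\A$. Proposition~\ref{tf1} is formulated in the unital setting $C(X)\otimes B(\H)$, but after localizing we are working inside a non-unital ideal, and a dependent element for $\alpha$ on all of $\A$ must still be produced. The central cutoff $e$, together with the crucial identity $\alpha(e) = e$ (which rests on $\hat\alpha$ being the identity on a neighborhood containing $\mathrm{supp}(e)$), is exactly what bridges this gap.
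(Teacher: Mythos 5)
Your argument is correct and follows essentially the same route as the paper's proof: use Fell's theorem to trivialize the bundle near a point of a non-empty open set fixed by $\hat\alpha$, invoke Proposition~\ref{conU} to obtain a continuous unitary field implementing $\alpha$ there, and cut off by a central function to produce a global dependent element. The only real difference is in the local-to-global step: the paper restricts to the closed set $\overline{\O}$ and applies Proposition~\ref{tf1} verbatim to the unital algebra $C(\overline{\O})\otimes\bh$ before extending by zero, whereas you rerun the argument inside the ideal $C_0(N)\cdot\A$ and globalize with the $\alpha$-fixed central cutoff $e$ --- both versions are valid.
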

\begin{proof}
Let $\H$ be a Hilbert space with $\dim(\H)=n$ and put $X=\mathcal{\widehat{A}}$.  A result of Fell~\cite[Theorem~3.2]{Fell1961} shows that $X$ is compact and Hausdorff, the center of $\A$ is $C(X)$, and $\A$ is the set of continuous cross sections of a
  locally trivial $\bh$ bundle $\B$ over $X$.

 Since the action of $\hat\alpha$ on $\mathcal{\widehat A}$ is not topologically free, there exists a non-empty open set $\O\subseteq X$ such that $\hat\alpha|_\O=\id|_\O$.  Local triviality of $\B$ implies that   we may assume the restriction of $\B$ to $\overline \O$ is trivial, so that $\A|_{\overline\O}\simeq C(\overline\O)\otimes\bh$.   As done previously, we identify $C(\overline\O)\otimes\bh$ with $C(\overline\O,\bh)$. 
 Proposition~\ref{tf1} gives a non-zero dependent element $a$ for the restriction of $\alpha$ to $\A_{\overline\O}$.   Let $S:=\{x\in \overline\O:  a(x)\neq 0\}$.  This is an open set, so we may find a non-zero element $f\in C_0(\O)\subseteq C(X)\subseteq \A$.  Then $d:=fa\in\A$ is a non-zero dependent element for $\alpha$.
\end{proof}

Here is the promised extension of Corollary~\ref{1hom}.

\begin{theorem}\label{nhom} Let $\A$ be an $n$-homogeneous \cstaralg\ and suppose $G \curvearrowright\A$ is an action of the discrete group $G$ on $\A$.  If $\A\subseteq \A\rtimes_r G$ has a unique pseudo-expectation, then $\A$ norms $\A\rtimes_r G$.
\end{theorem}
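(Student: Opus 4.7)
The plan is to deduce topological freeness of the induced action on $\widehat{\A}$ from the unique pseudo-expectation hypothesis via Corollary~\ref{nhomcor}, and then apply Theorem~\ref{norming_I}. The key link is that $n$-homogeneity ties the abstract algebraic obstruction ``no dependent elements'' to the topological obstruction ``topologically free action on the spectrum''.

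First I would invoke Lemma~\ref{countable reduction} to reduce to the case that $G$ is finitely generated, hence countable. Next, by Theorem~\ref{discrete crossed products}(iii), the unique pseudo-expectation hypothesis for $\A \subseteq \A \rtimes_r G$ is equivalent to $G \curvearrowright \A$ satisfying Kishimoto's condition; in particular, for each $g \in G \setminus \{e\}$, the automorphism $\alpha_g$ satisfies Kishimoto's condition. By the hierarchy in~\eqref{variousimps}, $\alpha_g$ has no non-zero dependent elements. Corollary~\ref{nhomcor}, read contrapositively, then forces $\hat\alpha_g$ to act topologically freely on $\widehat{\A}$, i.e.\ $\Fix(\hat\alpha_g)^\circ = \emptyset$. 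Since this holds for every $g \neq e$, the induced action $G \curvearrowright \widehat{\A}$ is topologically free.

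Now Fell's theorem (used in the proof of Corollary~\ref{nhomcor}) tells us that $\widehat{\A}$ is compact Hausdorff. Combined with countability of $G$, Remark~\ref{tfr->esfr} upgrades topological freeness to essential freeness of $G \curvearrowright \widehat{\A}$. At this point Theorem~\ref{norming_I} applies directly and yields that $\A$ norms $\A \rtimes_r G$, completing the argument.

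I do not anticipate a serious obstacle: the substantive work has already been done in Corollary~\ref{nhomcor} (which packages Fell's structure theorem together with the unitary-selection argument of Proposition~\ref{conU} and Proposition~\ref{tf1}), and in Theorem~\ref{norming_I}. The only step that requires a moment's thought is ensuring we may pass to a countable group before quoting Remark~\ref{tfr->esfr}, which is exactly what Lemma~\ref{countable reduction} is designed to do.
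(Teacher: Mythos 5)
Your proposal is correct and follows essentially the same route as the paper's own proof: reduce to countable $G$, pass from the unique pseudo-expectation to Kishimoto's condition via Theorem~\ref{discrete crossed products}(iii), to no dependent elements via Diagram~\ref{variousimps}, to topological freeness via Corollary~\ref{nhomcor}, upgrade to essential freeness by Remark~\ref{tfr->esfr}, and finish with Theorem~\ref{norming_I}. No gaps.
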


\begin{proof} 
As noted previously, we may assume $G$ is a countable group.  Theorem~\ref{discrete crossed products}(iii) shows that $G\curvearrowright\A$ satisfies Kishimoto's condition, and as observed earlier (see Diagram~\ref{variousimps}), $\A$ has no dependent elements.
By Corollary~\ref{nhomcor}, the action of $G$ is topologically free. 

Since $\mathcal{\widehat{A}}$ is Hausdorff,  the topological freeness of the action of $G$ on $\mathcal{\widehat{A}}$ implies that the action is essentially free (see Remark~\ref{tfr->esfr}).  By Theorem~\ref{norming_I}, $\A$ norms $\A\rtimes_r G$.
\end{proof}

\subsection{Residual Proper Outerness implies Norming}

An individual automorphism $\alpha$ of the $C^*$-algebra $\A$ is called \textbf{residually properly outer} if the induced automorphism $\dot{\alpha} \in \Aut(\A/\J)$ is properly outer for any invariant ideal $\J \subsetneq \A$. (It follows easily that $\dot{\alpha}$ is actually residually properly outer.) An action $G \curvearrowright \A$ is \textbf{residually properly outer} if $\alpha_g$ is residually properly outer for all $g \in G\setminus\{e\}$.

In this section, we show that $\A \subseteq \A \rtimes_r G$ is norming provided that $G \curvearrowright \A$ is residually properly outer.
While the proofs in the previous section relied heavily on GNS-type arguments and Theorem \ref{locally cyclic}, the proofs in this section have a distinctly von Neumann-algebraic flavor, and ultimately appeal to Theorem \ref{vN crossed product norming}. In Appendix \ref{D}, we compare residual proper outerness and essential freeness, showing that the former is not implied by the latter. This shows that the results of this section do not imply the results of the previous section. In our concluding remarks, we will discuss the possibility of unifying our norming results.

\begin{lemma} \label{lem1}
Let $\A \subseteq B(\H)$ be an abelian von Neumann algebra, and for $n \in \bbN$ fixed, let $\{T_\lambda\}_{\lambda \in \Lambda} \subseteq \A \otimes \bbM_n \subseteq B(\H \otimes \bbC^n)$ be a uniformly bounded net converging strongly to $0$. Then, given $\varepsilon > 0$, there exist $\lambda_0 \in \Lambda$ and a nonzero projection $z \in \A$ so that $\|T_{\lambda_0}(z \otimes I_n)\| < \varepsilon$.
\end{lemma}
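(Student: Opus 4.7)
The plan is to pass to the positive operators $S_\lambda := T_\lambda^* T_\lambda$, extract a single element of $\A$ that controls the full matrix norm via the abelian structure, and then cut that element down by a spectral projection.

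First, since the net $\{T_\lambda\}$ is uniformly bounded and converges strongly to $0$, the same is true of $S_\lambda := T_\lambda^* T_\lambda$, because $\|S_\lambda v\| \leq \|T_\lambda^*\|\|T_\lambda v\| \to 0$ for every $v \in \H \otimes \bbC^n$. Writing $S_\lambda = [(S_\lambda)_{ij}]$ with $(S_\lambda)_{ij} \in \A$, the diagonal entries are positive and pairing $S_\lambda$ with vectors of the form $\xi \otimes e_i$ in $\H \otimes \bbC^n$ shows $(S_\lambda)_{ii} \to 0$ strongly in $\A$. Hence the sum $F(S_\lambda) := \sum_{i=1}^{n}(S_\lambda)_{ii} \in \A_+$ also tends to $0$ strongly in $\A$.

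The technical crux the proof will invoke is the following inequality, valid because $\A$ is abelian: for any positive $S = [s_{ij}] \in \A \otimes \bbM_n$, one has $\|S\| \leq \|\sum_i s_{ii}\|$. This is the pointwise fact $\|M\| \leq \operatorname{tr}(M)$ for positive complex matrices, transported through the identification $\A \otimes \bbM_n \cong L^\infty(X, \bbM_n)$ of abelian von Neumann algebra tensor products. Applied to the positive operator $(z \otimes I_n) S_\lambda (z \otimes I_n)$, whose $(i,j)$ entry equals $z(S_\lambda)_{ij}$ by abelianness of $\A$, this yields the key estimate
\[
\|T_\lambda (z \otimes I_n)\|^2 = \|(z \otimes I_n) S_\lambda (z \otimes I_n)\| \leq \|z F(S_\lambda)\|.
\]

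Finally, I will produce the pair $(\lambda_0, z)$ by a spectral cutoff. Fix $\delta \in (0, \varepsilon^2)$. If $F(S_\lambda) \geq \delta \cdot I$ were to hold for every $\lambda$, then $\langle F(S_\lambda)\xi, \xi\rangle \geq \delta\|\xi\|^2$ for all $\xi \in \H$, contradicting the strong convergence $F(S_\lambda) \to 0$. So there exists $\lambda_0$ for which the spectral projection $z := \chi_{[0,\delta)}(F(S_{\lambda_0})) \in \A$ is nonzero, and functional calculus gives $\|z F(S_{\lambda_0})\| \leq \delta$; combined with the estimate above, $\|T_{\lambda_0}(z \otimes I_n)\| \leq \sqrt{\delta} < \varepsilon$. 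The only nonroutine ingredient is the trace-dominates-norm inequality for positive matrices over an abelian $C^*$-algebra; once that is in hand, everything else is passing from strong convergence on $\H \otimes \bbC^n$ down to strong convergence in $\A$ and applying a standard spectral truncation.
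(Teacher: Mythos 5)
Your proof is correct, and it shares the paper's overall skeleton: reduce to the positive case via $S_\lambda:=T_\lambda^*T_\lambda$, sum the diagonal entries to obtain a single positive element of $\A$, and cut down by a spectral projection of that element. Where you genuinely diverge is in the final norm estimate. The paper bounds $\|T_{\lambda_0}(z\otimes I_n)\|$ by taking the positive square root $S=(s_{ij})$ of the compressed operator, estimating each entry via $\|s_{ki}\|^2\le\|f_{ii}z\|\le 2n\delta$, and summing over all $n^2$ entries, which costs a factor of $n^{5/2}$ and forces the calibration $\sqrt{2}\,n^{5/2}\delta^{1/2}<\varepsilon$ at the outset. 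You instead invoke the pointwise inequality $\|M\|\le\operatorname{tr}(M)$ for positive matrices, transported through $\bbM_n(\A)\cong C(Y,\bbM_n)$ (equivalently, $S\le F(S)\otimes I_n$ for positive $S$), to get $\|(z\otimes I_n)S_{\lambda_0}(z\otimes I_n)\|\le\|zF(S_{\lambda_0})\|$ in one step; this avoids the square-root manipulation and yields a dimension-free constant, as $\delta<\varepsilon^2$ suffices. The remaining difference---you locate $\lambda_0$ by noting that $F(S_{\lambda_0})\not\ge\delta\cdot 1$ must eventually fail to hold, rather than testing against a fixed unit vector $\eta$ as the paper does---is cosmetic; both routes produce a nonzero spectral projection $z$ with $\|zF(S_{\lambda_0})\|$ small, and both arguments are complete.
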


\begin{proof}
Choose $\delta>0$ so that \[\sqrt{2}n^{5/2}\delta^{1/2} < \varepsilon.\] Suppose initially that $T_\lambda \geq 0$ for all $\lambda$. Choose a unit vector $\eta \in \H$ and let $e_1, e_2, ..., e_n$ be the standard basis for $\bbC^n$. Then, by strong convergence, choose $\lambda_0$ so that
\begin{equation}\label{eq1}
    \|T_{\lambda_0}(\eta \otimes e_i)\| < \delta, ~ 1 \leq i \leq n.
\end{equation}
Now write $T_{\lambda_0}=(f_{ij})$ with $f_{ij} \in \A$. It follows from \eqref{eq1} that
\begin{equation}\label{eq2}
    \|f_{ii}\eta\| < \delta, ~ 1 \leq i \leq n,
\end{equation}
so if we define $f = \sum_{i=1}^n f_{ii} \in \A$ then
\begin{equation}\label{eq3}
    \|f\eta\| < n\delta
\end{equation}
from \eqref{eq2}. Let $z \in \A$ be the spectral projection of $f$ for the interval $[0,2n\delta]$. If $z = 0$ then $f \geq 2n\delta$ so
\begin{equation}\label{eq4}
    \langle f\eta,\eta \rangle \geq 2n\delta,
\end{equation}
and so $\|f\eta\| \geq 2n\delta$, contradicting \eqref{eq3}. Thus $z \neq 0$.

Now $T_{\lambda_0}(z \otimes I_n)=(f_{ij}z) \geq 0$ so has a square root $S=(s_{ij}) \in \A \otimes \bbM_n$. Then
\begin{equation}\label{eq5}
    f_{ii}z=\sum_{k=1}^n s_{ki}^*s_{ki}, ~ 1 \leq i \leq n,
\end{equation}
so
\begin{equation}\label{eq6}
    \|s_{ki}\|^2 \leq \|f_{ii}z\| \leq 2n\delta, ~ 1 \leq i,k \leq n.
\end{equation}
Thus
\begin{equation}\label{eq7}
    \|S\| \leq \sum_{k,i}\|s_{ki}\| \leq \sqrt{2}n^{5/2}\delta^{1/2},
\end{equation}
so
\begin{equation}\label{eq8}
    \|T_{\lambda_0}(z \otimes I_n)\| = \|S\|^2 \leq 2n^5\delta.
\end{equation}

For the general case, apply the preceding calculation to the positive net $T_\lambda^*T_\lambda \to 0$ strongly. We obtain $\lambda_0$ and a nonzero projection $z \in \A$ so that
\begin{equation}\label{eq9}
    \|T_{\lambda_0}^*T_{\lambda_0}(z \otimes I_n)\| \leq 2n^5\delta,
\end{equation}
so
\begin{equation}\label{eq10}
    \|T_{\lambda_0}(z \otimes I_n)\| \leq \sqrt{2}n^{5/2}\delta^{1/2}<\eps.
\end{equation}
\end{proof}

\begin{lemma}\label{lem2}
Let $\A \subseteq B(\H)$ be a unital $C^*$-algebra such that $\M := \A''$ is type I$_n$. If $\alpha \in \Aut(\A)$ is residually properly outer and $\o{\alpha} \in \Aut(\M)$ is an extension, then $\o{\alpha}$ is outer.
\end{lemma}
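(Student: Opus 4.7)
The plan is to argue by contradiction. Assume $\o\alpha = \Ad(U)$ for some unitary $U \in \M$; since residual proper outerness of $\alpha$ (taking $\J = \{0\}$ in the definition) implies the apparently weaker condition that $\alpha$ itself is properly outer on $\A$, it suffices to exhibit a nonzero $\alpha$-invariant ideal $\J \subseteq \A$ together with a unitary $v \in M(\J)$ satisfying $\|\alpha|_\J - \Ad(v)\| < 2$, contradicting proper outerness.

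First I would invoke Kaplansky's density theorem to choose a net $(u_\lambda)_{\lambda \in \Lambda} \subseteq \A$ with $\|u_\lambda\| \leq 1$ and $u_\lambda \to U$ strongly. Since $\alpha(a) = UaU^*$ for each $a \in \A$, the uniformly bounded nets $\alpha(a)u_\lambda - u_\lambda a$, $u_\lambda^* u_\lambda - 1$, and $u_\lambda u_\lambda^* - 1$ in $\M$ all converge strongly to $0$. The type I$_n$ hypothesis gives $\M \cong Z \otimes \bbM_n$ with $Z := Z(\M)$ abelian, which is precisely what enables Lemma~\ref{lem1}: given a finite set $a_1, \dots, a_k \in \A$ and $\eps > 0$, repeated application of that lemma (with $Z$ playing the role of the abelian von Neumann algebra) produces $\lambda_0$ and a nonzero projection $z \in Z$ with
\[
\max_i \|(\alpha(a_i)u_{\lambda_0} - u_{\lambda_0}a_i)z\|,\ \|(u_{\lambda_0}^*u_{\lambda_0}-1)z\|,\ \|(u_{\lambda_0}u_{\lambda_0}^*-1)z\| < \eps.
\]
Hence on the central cut $z\M$, the element $u_{\lambda_0}z$ is norm-close to the unitary $Uz \in z\M$ and its adjoint action is norm-close to $\alpha$ on each of the $a_i$.

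The main obstacle — and the step requiring the most care — is converting this approximate innerness on the central cut $z\M$ into a genuine nonzero $\alpha$-invariant ideal $\J \subseteq \A$ equipped with a unitary $v \in M(\J)$. Because $z$ lives only in $Z(\M)$ and need not belong to $\A$ or to the multiplier algebra of any natural ideal of $\A$, $\J$ must be built from $\A$-side data: I would define $\J$ as the norm-closed $\alpha$-invariant two-sided ideal of $\A$ generated by suitable $\A$-approximations of $z$ (again furnished by Kaplansky density together with Lemma~\ref{lem1}), verifying nontriviality in the same step. The unitary $v$ is then manufactured from $u_{\lambda_0}$ via polar decomposition or spectral calculus applied to $u_{\lambda_0}^*u_{\lambda_0}$ on $\J$; the smallness of $u_{\lambda_0}^*u_{\lambda_0} - 1$ and $u_{\lambda_0}u_{\lambda_0}^* - 1$ on the cut keeps this normalization within an $O(\eps)$ error. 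Finally, taking $a_1,\dots,a_k$ to be an $\eps$-net in the unit ball of the eventual $\J$ and $\eps$ sufficiently small forces $\|\alpha|_\J - \Ad(v)\| < 2$, producing the sought contradiction.
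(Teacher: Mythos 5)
There is a genuine gap, and it sits exactly where you flag ``the main obstacle.'' Your plan is to contradict proper outerness of $\alpha$ itself by producing a nonzero $\alpha$-invariant ideal $\J \subseteq \A$ and a unitary $v \in M(\J)$ with $\|\alpha|_{\J} - \Ad(v)\| < 2$. But the approximate innerness you extract lives on the central cut $\A z$, which is a \emph{quotient} of $\A$ (by the ideal $\{a \in \A : az = 0\}$), not an ideal of $\A$, and in general no nonzero ideal of $\A$ is supported under $z$: for $\A = C[0,1] \subseteq L^\infty[0,1] = \M$ and $z$ the characteristic function of a fat Cantor set, $\{a \in \A : az = a\} = \{0\}$. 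Consequently the ideal generated by $\A$-approximations of $z$ is typically far larger than anything your estimates control --- it can be all of $\A$ --- and the inequality $\|(U - u_{\lambda_0})z\| < \eps$ says nothing about $U - u_{\lambda_0}$ off the support of $z$. Separately, your closing step of taking a finite $\eps$-net in the unit ball of $\J$ cannot work: the unit ball of a nonzero closed ideal of an infinite-dimensional $C^*$-algebra is not norm totally bounded, so controlling $\Ad(v)$ against $\alpha$ on finitely many elements never yields the uniform estimate $\|\alpha|_{\J} - \Ad(v)\| < 2$.

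The paper's proof avoids both problems by using the full strength of the \emph{residual} hypothesis. Choose unitaries $u_i \in \A$ with $u_i \to u$ strongly (the unitary form of the Kaplansky density theorem, so no polar-decomposition repair is needed), apply Lemma~\ref{lem1} to the single net $u - u_i$ to get a nonzero central projection $z$ and an index $i_0$ with $\|(u - u_{i_0})z\| < 1$, and set $\J = \{a \in \A : az = 0\}$, a proper $\alpha$-invariant ideal with $\A/\J \cong \A z$. Residual proper outerness makes the induced automorphism $\o{\alpha}|_{\A z}$ properly outer, while the single inequality $\|(u - u_{i_0})z\| < 1$ already gives $\|\Ad(uz)|_{\A z} - \Ad(u_{i_0}z)\| < 2$ \emph{uniformly}, since $uxu^* - u_{i_0}xu_{i_0}^* = (u-u_{i_0})xu^* + u_{i_0}x(u-u_{i_0})^*$ cut by $z$ is bounded by $2\|(u-u_{i_0})z\|\,\|x\|$. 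In particular no approximation of $\alpha$ on test elements $a_i$, and no $\eps$-net, is needed. If you redirect your contradiction at the quotient $\A z$ rather than at an ideal of $\A$, your first two paragraphs essentially become the paper's argument.
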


\begin{proof}
Suppose that $\o{\alpha} = \Ad(u)$ for some unitary $u \in \M$. By the Kaplansky Density Theorem, there exists a net $\{u_i\} \subseteq \A$ of unitaries such that $u_i \to u$ strongly. By Lemma \ref{lem1}, there exists a non-zero central projection $z \in \M$ and an index $i_0$ such that $\|(u - u_{i_0})z\| < 1$. Note that $\o{\alpha}(z) = z$ and let $\J = \{a \in \A: az = 0\}$.  Then $\J$ is a proper $\alpha$-invariant ideal of $\A$. It is easy to see that under the canonical identification $\A/\J \cong \A z$, the induced automorphism $\dot{\alpha} \in \Aut(\A/\J)$ corresponds to the restricted automorphism $\o{\alpha}|_{\A z} \in \Aut(\A z)$. Then $\o{\alpha}|_{\A z}$ is (residually) properly outer. But $\o{\alpha}|_{\A z} = \Ad(uz)|_{\A z}$ and $\|\Ad(uz)|_{\A z} - \Ad(u_{i_0}z)\| < 2$, a contradiction.
\end{proof}

\begin{remark}
The assumption that $\M$ in the previous lemma is type I$_n$ is crucial. Indeed, by \cite[Corollary 3.2]{Phillips1987}, there exists a unitary $u$ in the group ${\rm II}_1$ factor $L(\bbF_2)$ such that $\Ad(u)$ restricts to an outer automorphism of $C_r^*(\bbF_2)$. Since $C_r^*(\bbF_2)$ is simple, outer automorphisms are in fact residually properly outer.
\end{remark}

The previous lemma can be bootstrapped to prove the following stronger result:

\begin{lemma} \label{lem3}
Let $\A \subseteq B(\H)$ be a unital $C^*$-algebra such that $\M := \A''$ is type I$_n$. If $\alpha \in \Aut(\A)$ is residually properly outer and $\o{\alpha} \in \Aut(\M)$ is an extension, then $\o{\alpha}$ is properly outer.
\end{lemma}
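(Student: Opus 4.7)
The strategy is to bootstrap Lemma \ref{lem2} from outerness to proper outerness by passing to an appropriate $\o\alpha$-invariant central summand. Suppose for contradiction that $\o\alpha$ is not properly outer. By the Remark following Theorem \ref{equivalences}, this furnishes a nonzero $\o\alpha$-invariant central projection $z \in Z(\M)$ and a unitary $v \in \M z$ such that $\o\alpha|_{\M z} = \Ad(v)$.

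Set $\J := \{a \in \A : az = 0\}$. This is a norm-closed two-sided ideal of $\A$. It is $\alpha$-invariant, since $\o\alpha$ fixes $z$ and restricts to $\alpha$ on $\A$: for $a \in \J$, $\alpha(a)z = \o\alpha(az) = 0$. It is proper, because $\A z = \{0\}$ would pass to weak-operator closures to give $\M z = \{0\}$ and hence $z = 0$. By residual proper outerness of $\alpha$, the induced automorphism $\dot\alpha \in \Aut(\A/\J)$ is properly outer, and in fact residually properly outer (every proper $\dot\alpha$-invariant ideal of $\A/\J$ pulls back to a proper $\alpha$-invariant ideal of $\A$ strictly containing $\J$).

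Next, identify $\A/\J$ isometrically with $\A z \subseteq B(z\H)$ via $a + \J \mapsto az$. Under this identification $\A z$ is a unital $C^*$-subalgebra of $B(z\H)$, with unit $z = I_{B(z\H)}$, and its weak-operator closure is $\M z$. Because $\M$ is type I$_n$ and $z \in Z(\M)$, the corner $\M z$ is again type I$_n$. Moreover, $\o\alpha|_{\M z}$ is a von Neumann algebra extension of $\dot\alpha$, since $\o\alpha(az) = \alpha(a)z$ for each $a \in \A$. We may therefore apply Lemma \ref{lem2} to the inclusion $\A z \subseteq B(z\H)$ with automorphism $\dot\alpha$ and extension $\o\alpha|_{\M z}$; the conclusion is that $\o\alpha|_{\M z}$ is outer, contradicting $\o\alpha|_{\M z} = \Ad(v)$.

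The main obstacle I expect is the bookkeeping around the reduction: verifying cleanly that $\A z$ is unital in $B(z\H)$ with weak closure $\M z$ still of type I$_n$, and that $\dot\alpha$ inherits residual proper outerness from $\alpha$, so that Lemma \ref{lem2} applies to the quotient inclusion. Once these identifications are in place, the desired contradiction is immediate.
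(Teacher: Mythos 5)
Your proposal is correct and follows essentially the same route as the paper: restrict to an $\o{\alpha}$-invariant central summand $\M z$, use the ideal $\J=\{a\in\A: az=0\}$ to see that the restricted automorphism of $\A z$ is still residually properly outer, and apply Lemma~\ref{lem2} to the inclusion $\A z\subseteq B(z\H)$ with $(\A z)''=\M z$ of type I$_n$. The paper argues directly (every non-zero invariant central summand yields an outer restriction) rather than by contradiction, but the substance is identical.
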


\begin{proof}
Let $z \in \M\setminus\{0\}$ be an $\o{\alpha}$-invariant central projection. Then, as in the proof of Lemma \ref{lem2}, $\o{\alpha}|_{\A z}$ is residually properly outer. Since $\A z \subseteq B(z\H)$ is a unital $C^*$-algebra such that $(\A z)'' = \M z$ is type I$_n$, $\o{\alpha}|_{\M z}$ is outer by Lemma \ref{lem2}. Thus $\o{\alpha}$ is properly outer.
\end{proof}

The following lemma and its proof are very close to \cite[Remark 2.5(ii)]{PopSmith2008}.

\begin{lemma}\label{guts} Let $\A$ be a $C^*$-algebra and let $(\A,G,\alpha)$ be a $C^*$-dynamical system, where $G$ is a discrete group.  If $\A$ has no finite-dimensional representations, then $\A$ norms $\A^{**} \o{\rtimes} G$. A fortiori, $\A$ norms $\A \rtimes_r G$.
\end{lemma}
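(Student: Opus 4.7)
The plan is to follow the argument in Pop--Smith~\cite[Remark 2.5(ii)]{PopSmith2008}. Set $\N := \A^{**}\o{\rtimes}G$ and choose a faithful normal representation $\N \subseteq B(H)$; since norming is intrinsic to the $C^*$-inclusion, we may work entirely within $B(H)$. By \cite[Lemma 2.3(ii)]{PopSinclairSmith2000}, $\A$ norms $\N$ if and only if its bicommutant $\A'' \subseteq B(H)$ norms $\N$, so it suffices to show that $\A''$ norms all of $B(H)$, for this immediately implies norming of the intermediate $\N$.

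The hypothesis enters through the well-known equivalence that a unital $C^*$-algebra has no finite-dimensional $*$-representations if and only if its enveloping von Neumann algebra $\A^{**}$ has no type $\typeI_n$ summand for finite $n$. Since the representation $\A^{**} \to B(H)$ induced from $\N \subseteq B(H)$ is normal, the image $\A''$ inherits this property, so $\A''$ decomposes as a direct sum of summands of types $\typeI_\infty$, $\typeII_1$, $\typeII_\infty$, and $\typeIII$ only. By Theorem~\ref{vNa norms B(H)}, the $\typeI_\infty$, $\typeII_\infty$, and $\typeIII$ summands of $\A''$ norm the corresponding parts of $B(H)$, so these present no difficulty.

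The main obstacle is the possible $\typeII_1$ central summand of $\A''$, to which Theorem~\ref{vNa norms B(H)} does not apply directly. Here the crossed product structure is decisive: after passing if necessary to an amplified representation, the commutant $\A' \subseteq B(H)$ contains both the modular-conjugated copy $J\N J$ and the group unitaries $\{u_g : g \in G\}$, so $\A'$ is properly infinite. Using a family of mutually orthogonal isometries in $\A'$ to disperse the vectors $\xi_1,\ldots,\xi_n \in H$ into pairwise orthogonal subspaces, one constructs a single vector $\xi \in H$ and elements $m_1,\ldots,m_n \in \A''$ with $m_i\xi \approx \xi_i$, which verifies local cyclicity of $\A''$ on the $\typeII_1$ portion of $H$. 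Theorem~\ref{locally cyclic} then gives that this part of $\A''$ norms the corresponding part of $B(H)$. Assembling the four type contributions yields that $\A''$ norms all of $B(H)$, whence $\A$ norms $\N$.

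The ``\emph{a fortiori}'' clause is immediate from the canonical inclusion $\A\rtimes_r G \subseteq \A^{**}\o{\rtimes}G$: norming of the larger algebra by $\A$ automatically restricts to norming of any intermediate $C^*$-subalgebra.
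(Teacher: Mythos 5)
Your reduction to showing that the bicommutant norms, and your ``a fortiori'' clause, are fine, but the heart of the argument --- the type $\typeII_1$ case --- has a genuine gap, and in fact the intermediate claim you are trying to establish there is false. A type $\typeII_1$ von Neumann algebra need \emph{not} norm $B(H)$: this is precisely why Theorem~\ref{vNa norms B(H)} omits type $\typeII_1$. Concretely, if $\M$ is a $\typeII_1$ factor represented with multiplicity $\geq 2$ (e.g.\ $\M\otimes 1$ on $L^2(\M)\otimes\bbC^2$), then $\M$ is not locally cyclic --- a support-projection/trace argument shows one cannot have $a_1\eta\approx\Omega\oplus 0$ and $a_2\eta\approx 0\oplus\Omega$ simultaneously --- so by Theorem~\ref{locally cyclic} it does not norm $B(H)$. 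This situation actually occurs in the representations you allow: if $G$ is finite and $\A^{**}$ has a $\typeII_1$ summand, then in the standard representation of $\N=\A^{**}\o{\rtimes}G$ the algebra $\A''$ acts on that summand with multiplicity $|G|$. So ``$\A''$ norms all of $B(H)$'' cannot be the route; one must norm only the intermediate algebra. Your proposed repair is also incorrect on two counts: the group unitaries $u_g$ implement the automorphisms $\alpha_g$ and hence do \emph{not} lie in $\A'$ (and $J\N J=\N'$ may well be finite); and even if $\A'$ were properly infinite, orthogonal isometries in the \emph{commutant} do not give local cyclicity of $\A''$ --- the standard argument behind Theorem~\ref{vNa norms B(H)} requires the isometries $v_i$ to lie in the algebra itself, since one sets $\xi=\sum_j v_j\xi_j$ and recovers $\xi_i=v_i^*\xi$ with $v_i^*$ in the norming algebra.

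The paper's proof sidesteps all of this by never attempting to norm $B(H)$. It represents $\M:=\A^{**}$ diagonally on $\H\otimes\ell^2(G)$ via $\pi(x)=\sum_g\alpha_{g^{-1}}(x)\otimes e_{g,g}$, notes $\M\o{\rtimes}G\subseteq\M\o{\otimes}B(\ell^2(G))$, and shows $\pi(\M)$ norms the latter algebra only. The key step compresses by $1\otimes p_F$ for finite $F\subseteq G$: the compression of $\M\o{\otimes}B(\ell^2(G))$ is $\M\otimes\bbM_{|F|}$, a finitely generated module over the compressed copy of $\pi(\M)$, so \cite[Corollary 3.3]{PopSinclairSmith2000} applies --- and this is where the hypothesis of no finite-dimensional representations actually does its work, uniformly across all types rather than via a type decomposition. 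A limit over $F$ gives column-norming, hence norming by \cite[Lemma 2.4]{PopSinclairSmith2000}. If you want to salvage your outline, you would need to replace the $\typeII_1$ step with an argument of this kind; the type decomposition plus Theorem~\ref{vNa norms B(H)} cannot cover that case.
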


\begin{proof} 
We assume initially that $\A$ is a von Neumann algebra $\M$ with no finite-dimensional representations.  Write the standard basis vectors of $\ell^2(G)$ as $\{\delta_g:g\in G\}$ 
and denote by $e_{g,h}$ the matrix unit that satisfies $e_{g,h}\delta_h=\delta_g$ for $g,h\in G$.
If $\M$ is represented on a Hilbert space $\mathcal H$, then $\M$ is also represented on $\mathcal{H}\otimes\ell^2(G)$ by
\[\pi(x)=\sum_{g\in G}\alpha_{g^{-1}}(x)\otimes e_{g,g},\ \ \ x\in \M,\]
and $G$ is represented on $\mathcal{H}\otimes\ell^2(G)$ by
\[\lambda(g)(\eta\otimes \delta_h)=\eta\otimes\delta_{gh},\ \ \ \eta\in \mathcal{H},\ \ g,h\in G.\]
Then $\M\overline{\rtimes} G\subseteq \M\overline{\otimes}B(\ell^2(G))$, so it suffices to show that $\pi(\M)$ norms the latter algebra. For each finite subset $F\subseteq G$, let $p_F$ be the projection onto span$\{\delta_h:h\in F\}$. Then the compression of $\M\overline{\otimes}B(\ell^2(G))$ by $1\otimes p_F$ has the form $\M\otimes \mathbb{M}_k$ where $k$ is the cardinality of $F$. Note that $1\otimes p_F\in \pi(\M)'$.  Now
\[(1\otimes p_F)\pi(x)(1\otimes p_F)=\sum_{g\in F}\alpha_{g^{-1}}(x)\otimes e_{g,g},\]
so
$(1\otimes p_F)(\M\overline{\otimes}B(\ell^2(G)))(1\otimes p_F)$ is a finitely-generated bimodule over the von Neumann algebra $(1\otimes p_F)(\pi(\M))(1\otimes p_F)$. Thus the larger algebra is normed by the smaller one from \cite[Cor. 3.3]{PopSinclairSmith2000}, using that $\M$ has no finite-dimensional representations.

Now consider $X\in \mathbb{M}_n(\M\overline{\otimes} B(\ell^2(G)))$, $\|X\|=1$. Given $\varepsilon >0$, choose $F$ so large that 
\[\|(1\otimes p_F\otimes I_n)X(1\otimes p_F\otimes I_n)\| > 1-\varepsilon.\]
Then choose $C \in {\mathrm{Col}}_n((1\otimes p_F)\pi(\M))$ with $\|C\|=1$ so that
\[\|(1\otimes p_F\otimes I_n)X(1\otimes p_F\otimes I_n)C\|>1-\varepsilon.\] 
Since $C(1\otimes p_F)=(1\otimes p_F\otimes I_n)C$, we obtain $\|XC\| > 1-\varepsilon$. This proves that $\M$ column-norms $\M\overline{\rtimes} G$, and norming now follows from \cite[Lemma 2.4]{PopSinclairSmith2000}. 

Finally, consider a $C^*$-algebra $\A$ with no finite-dimensional representations. Then the von Neumann algebra $\M:=\A^{**}$ also fails to have finite-dimensional representations. From the first part of the proof, $\M$ norms $\M\overline\rtimes G$. In its universal representation, $\A$ is strongly dense in $\A^{**}=\M$ and so, by \cite[Lemma 2.3(ii)]{PopSinclairSmith2000}, $\A$ norms $\M\overline\rtimes G$ and hence also norms the subalgebra  $\A\rtimes_r G$.
\end{proof}

\begin{theorem} \label{norming_II}
Let a discrete group $G$ act on a unital $C^*$-algebra $\A$ by residually properly outer automorphisms. Then $\A$ norms $\A \rtimes_r G$.
\end{theorem}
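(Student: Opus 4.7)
The plan is to pass to the bicommutant $\M := \A''$ in the universal representation, split $\M$ by von Neumann-algebraic type into its type $\mathrm{I}_{\fin}$ part and its complement, establish norming on each piece separately, and then descend to $\A$. By Lemma~\ref{countable reduction}, I may assume $G$ is countable. Represent $\A$ in its universal representation on $\H$ and set $\M := \A'' \cong \A^{**}$, so that each $\alpha_g$ extends uniquely to a normal automorphism $\o\alpha_g$ of $\M$. Let $p \in Z(\M)$ be the central projection with $p\M = \bigoplus_{n<\infty} z_n\M$ the type $\mathrm{I}_{\fin}$ summand ($z_n\M$ of type $\mathrm{I}_n$) and $(1-p)\M$ the complementary (type $\mathrm{I}_\infty + \mathrm{II} + \mathrm{III}$) summand. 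Since automorphisms preserve types, $p$, $1-p$, and each $z_n$ are $G$-invariant and hence central in $\M \o{\rtimes} G$, giving
\[
  \M \o{\rtimes} G \;=\; (p\M) \o{\rtimes} G \;\oplus\; ((1-p)\M) \o{\rtimes} G.
\]

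On the complementary summand $(1-p)\M$: having no finite type $\mathrm{I}$ direct summand, it contains $\mathbb{M}_k$ unitally for every $k$, and hence admits no nonzero finite-dimensional $*$-representation. The opening paragraph of the proof of Lemma~\ref{guts}---which establishes that every von Neumann algebra without finite-dimensional representations norms its own crossed product---then gives that $(1-p)\M$ norms $((1-p)\M) \o{\rtimes} G$.

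On the type $\mathrm{I}_{\fin}$ summand $p\M$, the strategy is to verify that $\o\alpha_g$ is properly outer on $p\M$ for each $g \neq e$ and then invoke Theorem~\ref{vN crossed product norming}. For fixed $n$, the ideal $\{a \in \A : az_n = 0\}$ is $G$-invariant, so $\A z_n$ carries an induced action which is again residually properly outer (this property descends to $G$-invariant quotients); since $(\A z_n)'' = z_n\M$ is of type $\mathrm{I}_n$, Lemma~\ref{lem3} gives that $\o\alpha_g|_{z_n\M}$ is properly outer. To bootstrap to all of $p\M$, suppose for contradiction that $\o\alpha_g|_{q\M} = \Ad(u)$ for some nonzero $G$-invariant central $q \leq p$ and unitary $u \in q\M$. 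Choose $n$ with $qz_n \neq 0$; cutting by the central projection $qz_n$ yields $\o\alpha_g|_{qz_n\M} = \Ad(uz_n)$, contradicting the proper outerness of $\o\alpha_g|_{z_n\M}$ on the nonzero invariant central summand $qz_n$ of $z_n\M$. Hence $\o\alpha_g$ is properly outer on $p\M$, and Theorem~\ref{vN crossed product norming} yields that $p\M$ norms $(p\M) \o{\rtimes} G$.

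Combining the two summands by the routine direct-sum argument for norming across a central projection, $\M$ norms $\M \o{\rtimes} G$. Because $\A$ is $\sigma$-weakly dense in $\M$, \cite[Lemma~2.3(ii)]{PopSinclairSmith2000} transfers norming from $\M$ to $\A$; since $\A \rtimes_r G \subseteq \M \o{\rtimes} G$, the conclusion that $\A$ norms $\A \rtimes_r G$ follows. I expect the main obstacle to be the finite-type paragraph: residual proper outerness of $\alpha_g$ on $\A$ does not in general imply proper outerness of the extension $\o\alpha_g$ on $\A^{**}$ (as the remark after Lemma~\ref{lem2} shows, even outerness can fail to extend), so one genuinely needs the type $\mathrm{I}_n$ extension result of Lemma~\ref{lem3} together with the $G$-invariance of the central projections $z_n$ to recover proper outerness on the type $\mathrm{I}_{\fin}$ part.
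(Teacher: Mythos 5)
Your proposal is correct and follows essentially the same route as the paper's proof: extend the action to $\A^{**}$, decompose by von Neumann type, treat the type I$_n$ pieces via Lemma~\ref{lem3} together with Theorem~\ref{vN crossed product norming} and everything else via Lemma~\ref{guts}, then descend to $\A$ by \cite[Lemma~2.3(ii)]{PopSinclairSmith2000}. The only (cosmetic) difference is that you aggregate the whole type I$_{\fin}$ summand and bootstrap proper outerness across the $z_n$'s before invoking Theorem~\ref{vN crossed product norming} once, whereas the paper applies it to each type I$_n$ summand separately and combines afterward.
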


\begin{proof}
Each $\alpha_g$ extends uniquely to an automorphism of $\A^{**}$, denoted $\o{\alpha}_g$. We first show that $\A^{**}$ norms $\A^{**} \o{\rtimes} G$. Each $\o{\alpha}_g$ respects the type decomposition of $\A^{**}$, so let $z$ be a central projection so that $\A^{**}z$ is one of the summands.  A routine argument shows that it suffices to examine each type separately.

Suppose first that $\A^{**}z$ is type I$_n$ for a fixed integer $n \geq 1$. Then for each $g \in G\setminus\{e\}$, $\o{\alpha}_g$ is properly outer by Lemma \ref{lem3}. By \cite[Theorem 5.1]{CameronSmith2016}, $\A^{**}z$ norms $(\A^{**}z) \o{\rtimes} G$.
Lemma~\ref{guts} shows $\A^{**}z$ norms $(\A^{**}z)\o{\rtimes} G$ for the other types.

We conclude that $\A^{**}$ norms $\A^{**} \o{\rtimes} G$, so $\A$ also norms this crossed product by \cite[Lemma 2.3(ii)]{PopSinclairSmith2000}. A fortiori, $\A$ norms $\A \rtimes_r G$.
\end{proof}

The previous result gives the following immediate corollary.

\begin{corollary}\label{simple}
If $G \curvearrowright \A$ is an outer action of a discrete group on a simple $C^*$-algebra, then $\A$ norms $\A \rtimes_r G$.
\end{corollary}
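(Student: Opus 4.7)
The plan is to derive this directly from Theorem~\ref{norming_II} by verifying that, in the simple case, outer actions are automatically residually properly outer. First I would observe that because $\A$ is simple, the only $\alpha_g$-invariant ideals of $\A$ are $\{0\}$ and $\A$ itself. Hence the only proper invariant ideal is $\{0\}$, and the condition of residual proper outerness collapses to the condition of proper outerness of $\alpha_g$ on $\A$ itself.

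Next I would invoke Theorem~\ref{equivalences}(iii): for simple $\A$, the notions ``outer'', ``properly outer'', ``no dependent elements'', and ``Kishimoto's condition'' all coincide. In particular, each outer $\alpha_g$ (for $g \in G \setminus \{e\}$) is properly outer. Combining this with the previous paragraph, the action $G \curvearrowright \A$ is residually properly outer.

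Theorem~\ref{norming_II} then applies and yields that $\A$ norms $\A \rtimes_r G$. I do not anticipate any obstacle, since the argument is essentially a bookkeeping reduction: simplicity trivializes the ``residual'' part of the hypothesis, and the equivalence of outerness with proper outerness in the simple case supplies the remaining input needed by Theorem~\ref{norming_II}.
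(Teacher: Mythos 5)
Your proof is correct and follows exactly the route the paper intends: the corollary is stated as an immediate consequence of Theorem~\ref{norming_II}, precisely because simplicity reduces residual proper outerness to proper outerness of each $\alpha_g$ on $\A$, and Theorem~\ref{equivalences}(iii) upgrades outerness to proper outerness in the simple case. No gaps.
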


\section{Concluding Remarks and Open Questions}
\setcounter{subsection}{1}

We have shown that the inclusion $\A \subseteq \A \rtimes_r G$ is norming provided either (1) $G \curvearrowright \mathcal{\widehat{A}}$ is essentially free or (2) $G \curvearrowright \A$ is residually properly outer. It follows that when $\A \subseteq \A \rtimes_r G$ has a unique pseudo-expectation, the inclusion is norming provided $\A$ is:
\begin{enumerate}
\item abelian \cite[Theorem~8.2]{Pitts2017};
\item $n$-homogeneous (Theorem~\ref{nhom});
\item separable (Corollary~\ref{separable});
\item simple (Corollary~\ref{simple}); or
\item a von Neumann algebra \cite[Theorem~5.1]{CameronSmith2016}.
\end{enumerate}
We are left to wonder if this might be true in general:
\begin{question} \label{Q1}
For an arbitrary discrete $C^*$-dynamical system $(\A,G,\alpha)$, is the $C^*$-inclusion $\A \subseteq \A \rtimes_r G$ norming whenever it has a (faithful) unique pseudo-expectation?
\end{question}

A proof might go like this: If $\A \subseteq \A \rtimes_r G$ has a faithful unique pseudo-expectation, then $G \curvearrowright \A$ satisfies Kishimoto's condition (see Theorem \ref{discrete crossed products}), which implies the extended action $G \curvearrowright I(\A)$ also satisfies Kishimoto's condition (see the reformulations of Kishimoto's condition in Section \ref{DCP}). Since $I(\A)$ is an injective $C^*$-algebra (and thus an $AW^*$-algebra), one should be able to show that $I(\A)$ norms $I(\A) \rtimes_r G$, \`a~la Theorem \ref{vN crossed product norming}. Since $\A$ and $I(\A)$ are closely related, perhaps then $\A$ also norms $I(\A) \rtimes_r G$, \`a~la \cite[Lemma 2.3 (ii)]{PopSinclairSmith2000}. Indeed, the inclusion $\A \subseteq I(\A)$ is \textbf{rigid}, in the sense that the only complete contraction $u:I(\A) \to I(\A)$ such that $u|_{\A} = \id$ is the identity itself.

We carry out the first step of this program in Theorem \ref{IA} below. At the moment, we do not see how to carry out the second step of the program, so we leave it as an open question (Question \ref{Q2} below).

\begin{lemma} \label{lemma_A}
Let $\B$ be a unital $C^*$-algebra and $\{z_n\} \subseteq \B$ be mutually orthogonal central projections such that $\sup_{\B} z_n = 1$. Then the map $\B \to \prod_{n=1}^\infty \B z_n: b \mapsto (bz_n)_{n=1}^\infty$ is a $*$-monomorphism, therefore completely isometric. Thus for all $X \in M_k(\B)$, $\|X\| = \sup_n \|X(I_k \otimes z_n)\|$.
\end{lemma}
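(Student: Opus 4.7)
My plan is to exhibit $\Phi : \B \to \prod_n \B z_n$, $\Phi(b) = (bz_n)_n$, as an injective $*$-homomorphism; complete isometry is then automatic for a $*$-monomorphism between $C^*$-algebras, and the norm formula for $X \in M_k(\B)$ will follow by applying $\Phi$ coordinatewise at the matrix level and using that a $C^*$-product norm is the supremum of coordinate norms.

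That $\Phi$ is a well-defined $*$-homomorphism is immediate: centrality of $z_n$ makes $b \mapsto bz_n$ a $*$-homomorphism onto the $C^*$-subalgebra $\B z_n \subseteq \B$, and $\|bz_n\| \leq \|b\|$ keeps the tuple bounded in $\prod_n \B z_n$.

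The only real content is injectivity. I would suppose $bz_n = 0$ for every $n$, set $a := b^*b \in \B_+$, and use $a z_n = 0$ together with centrality of $z_n$ to write
\[
    \|a\|(1 - z_n) - a = (\|a\| \cdot 1 - a)(1 - z_n) \geq 0
\]
as a product of commuting positive elements. Rearranging gives $\|a\| z_n \leq \|a\| \cdot 1 - a$ for all $n$, so (assuming $\|a\| > 0$) the self-adjoint element $\|a\|^{-1}(\|a\| \cdot 1 - a)$ is an upper bound for $\{z_n\}$ in the self-adjoint ordering of $\B$. The hypothesis $\sup_\B z_n = 1$ then forces $1 \leq \|a\|^{-1}(\|a\| \cdot 1 - a)$, i.e., $a \leq 0$; combined with $a \geq 0$ this yields $a = 0$ and hence $b = 0$.

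The main obstacle is this injectivity step, and it hinges on interpreting $\sup_\B z_n = 1$ as a supremum in the self-adjoint ordering (equivalently, that no non-zero positive element lies below every $1 - z_n$). In the intended application $\B = I(\A)$, an $AW^*$-algebra and in particular monotone complete, projection-lattice suprema and self-adjoint suprema coincide, so the hypothesis is unambiguous and the argument applies without modification.
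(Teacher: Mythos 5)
Your proof is correct and takes essentially the same route as the paper's: both reduce injectivity to a positive element $a$ with $az_n=0$, show that $1$ minus a normalization of $a$ is a self-adjoint upper bound for $\{z_n\}$ (the paper via $z_n=(1-b)^{1/2}z_n(1-b)^{1/2}\leq 1-b$ for $b\in\Ball(\B_+)$, you via the commuting-positives factorization $(\|a\|1-a)(1-z_n)\geq 0$), and then invoke $\sup_\B z_n=1$ to force $a=0$. Your closing remark about interpreting the supremum in the self-adjoint ordering is a reasonable clarification but does not alter the argument.
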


\begin{proof}
The specified map is clearly a $*$-homomorphism. Now suppose $b \in \Ball(\B_+)$ satisfies $bz_n = 0$ for all $n \in \bbN$. Then for all $n \in \bbN$,
\[
    z_n = (1-b)z_n = (1-b)^{1/2}z_n(1-b)^{1/2} \leq 1-b.
\]
Since $\sup_{\B} z_n = 1$, we conclude that $b = 0$.
\end{proof}

\begin{lemma} \label{lemma_B}
Let $G \curvearrowright \A$ be the action of a discrete group on a unital $C^*$-algebra. Suppose that $\{z_n\} \subseteq \A$ are mutually orthogonal $G$-invariant central projections such that $\sup_{\A} z_n = 1$. Then $\{z_n\} \subseteq \A \rtimes_r G$ are mutually orthogonal central projections such that $\sup_{\A \rtimes_r G} z_n = 1$.
\end{lemma}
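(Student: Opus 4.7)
The plan is to verify the three assertions of the conclusion in order: mutual orthogonality, centrality in $\A\rtimes_r G$, and the supremum condition. The first is trivial since $z_nz_m=0$ already holds in $\A$, and this relation is preserved in any containing algebra.

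For centrality, I would check commutation on a dense $*$-subalgebra. Each $z_n$ commutes with $\A$ by hypothesis. For a group element $g\in G$, the defining relation of the crossed product gives $gz_ng^{-1}=\alpha_g(z_n)=z_n$ by $G$-invariance, so $z_n$ commutes with $g$. Thus $z_n$ commutes with every finite sum $\sum a_g g$, and since multiplication is continuous this extends to all of $\A\rtimes_r G$.

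The substance of the argument is the supremum condition, and for this I would exploit the canonical faithful conditional expectation $\bbE:\A\rtimes_r G\to\A$ together with its $\A$-bimodule property. The interpretation of $\sup_{\A\rtimes_r G}z_n=1$ that is actually used downstream (in Lemma~\ref{lemma_A}) is: any positive $b\le 1$ with $bz_n=0$ for every $n$ must vanish. So I would take such a $b\in\Ball((\A\rtimes_r G)_+)$, noting that self-adjointness of $b$ together with $bz_n=0$ also yields $z_nb=0$. Applying $\bbE$ and using its bimodule property, $\bbE(b)z_n=\bbE(bz_n)=0$ for all $n$. Since $\bbE(b)\in\Ball(\A_+)$, the same reasoning as in Lemma~\ref{lemma_A} applied inside $\A$ shows that $z_n\le 1-\bbE(b)$ for each $n$; invoking the hypothesis $\sup_\A z_n=1$ now gives $1\le 1-\bbE(b)$, forcing $\bbE(b)=0$. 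By faithfulness of $\bbE$ and positivity of $b$, we conclude $b=0$, which is what was needed.

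I do not anticipate a genuine obstacle: once one recognizes that the relevant meaning of ``supremum equal to $1$'' is the cancellation property used in the proof of Lemma~\ref{lemma_A}, the faithful conditional expectation reduces the claim in $\A\rtimes_r G$ directly to the same property in $\A$. The only small point requiring care is to ensure that $b z_n = 0$ is genuinely equivalent to $z_n b z_n = 0$ for self-adjoint $b \ge 0$, and that the bimodule identity $\bbE(bz_n)=\bbE(b)z_n$ applies—both of which are standard because $z_n\in\A$.
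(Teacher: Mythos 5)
Your proposal is correct and follows essentially the same route as the paper: centrality is checked on the dense subalgebra via $G$-invariance of the $z_n$, and the supremum condition is transferred to $\A$ by applying the canonical faithful conditional expectation $\bbE$ and then invoking faithfulness. Your reformulation in terms of the complement $b=1-x$ (the cancellation property actually consumed by Lemma~\ref{lemma_A}) is equivalent to the paper's argument, which works directly with an upper bound $x\leq 1$ and shows $\bbE(1-x)=0$.
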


\begin{proof}
To see that $z_n$ is central in $\A \rtimes_r G$, note that
\[
    z_nag = az_ng = agg^{-1}z_ng = ag\alpha_g^{-1}(z_n) = agz_n.
\]
Now let $x \in \A \rtimes_r G$ be an upper bound for $\{z_n\}$, and suppose $x \leq 1$. Denote by $\bbE:\A \rtimes_r G \to \A$ the canonical faithful conditional expectation. Then $\bbE(x) \in \A$ is an upper bound for $\{z_n\}$, and $\bbE(x) \leq 1$. It follows that $\bbE(x) = 1$, which implies $\bbE(1-x) = 0$, which in turn implies $x = 1$ (since $\bbE$ is faithful). Thus $\sup_{\A \rtimes_r G} z_n = 1$.
\end{proof}

\begin{lemma} \label{lemma_C}
Let $G \curvearrowright \A$ be the action of a discrete group on a unital $C^*$-algebra. Suppose that $\{z_n\} \subseteq \A$ are mutually orthogonal $G$-invariant central projections such that $\sup_{\A} z_n = 1$. If $\A z_n$ norms $\A z_n \rtimes_r G$ for all $n \in \bbN$, then $\A$ norms $\A \rtimes_r G$.
\end{lemma}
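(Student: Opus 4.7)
The strategy is to reduce norming of $X\in M_k(\A\rtimes_r G)$ to norming of its ``pieces'' $X(I_k\otimes z_n)$, which live naturally in corners that are themselves reduced crossed products over the algebras $\A z_n$.

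First I would verify the identification $(\A\rtimes_r G)z_n \cong \A z_n\rtimes_r G$. Because $z_n$ is central in $\A$ and $G$-invariant, $\A z_n$ is a $G$-invariant ideal of $\A$, and the restricted action makes sense on $\A z_n$. By Lemma~\ref{lemma_B}, $z_n$ is central in $\A\rtimes_r G$, so $(\A\rtimes_r G)z_n$ is an ideal, and on finite sums we have $(ag)z_n = az_n g$. The canonical faithful conditional expectation $\bbE:\A\rtimes_r G\to\A$ restricts to a faithful conditional expectation $(\A\rtimes_r G)z_n\to \A z_n$, which shows (by the standard universal property of reduced crossed products, e.g.\ via conditional-expectation characterizations in~\cite[Chapter~7]{PedersenBook}) that $(\A\rtimes_r G)z_n$ is canonically $*$-isomorphic to $\A z_n\rtimes_r G$.

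Next I would take $X\in M_k(\A\rtimes_r G)$ with $\|X\|=1$ and $\eps>0$. By Lemma~\ref{lemma_B}, the $z_n$'s are mutually orthogonal central projections in $\A\rtimes_r G$ with supremum $1$, so Lemma~\ref{lemma_A} (applied to $\B=\A\rtimes_r G$) gives
\[
    \|X\| = \sup_n \|X(I_k\otimes z_n)\|.
\]
Pick $n$ with $\|X(I_k\otimes z_n)\| > 1-\eps$, and view $X(I_k\otimes z_n) \in M_k((\A\rtimes_r G)z_n) \cong M_k(\A z_n\rtimes_r G)$. By the hypothesis that $\A z_n$ norms $\A z_n\rtimes_r G$, there exist $R\in\Ball(\ROW_k(\A z_n))$ and $C\in\Ball(\COL_k(\A z_n))$ with
\[
    \|R\,X(I_k\otimes z_n)\,C\| > \|X(I_k\otimes z_n)\| - \eps > 1-2\eps.
\]

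Finally, since $\A z_n\subseteq\A$ is an inclusion of $C^*$-algebras (with the same norm on elements), $R\in\Ball(\ROW_k(\A))$ and $C\in\Ball(\COL_k(\A))$. Every entry of $R$ equals itself times $z_n$, and similarly on the right for $C$, so centrality of $z_n$ in $\A\rtimes_r G$ yields $RXC = R\,X(I_k\otimes z_n)\,C$. Hence $\|RXC\| > 1-2\eps$, and since $\eps>0$ was arbitrary $\A$ norms $\A\rtimes_r G$.

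The only step that is not completely formal bookkeeping is the corner-identification $(\A\rtimes_r G)z_n \cong \A z_n\rtimes_r G$; once that is granted the rest is a routine application of Lemmas~\ref{lemma_A} and~\ref{lemma_B}.
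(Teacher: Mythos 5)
Your proposal is correct and follows essentially the same route as the paper: apply Lemmas~\ref{lemma_A} and~\ref{lemma_B} to locate an $n$ with $\|X(I_k\otimes z_n)\|>\|X\|-\eps$, invoke the hypothesis on $\A z_n \rtimes_r G$ to produce $R$ and $C$, and use centrality of $z_n$ to see $RXC = RX(I_k\otimes z_n)C$. Your explicit verification of the identification $(\A\rtimes_r G)z_n \cong \A z_n\rtimes_r G$ is a detail the paper leaves implicit, but it is the same argument.
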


\begin{proof}
Let $X \in M_k(\A \rtimes_r G)$ and $\varepsilon > 0$. By Lemmas \ref{lemma_A} and \ref{lemma_B}, there exists $n \in \bbN$ such that
\[
    \|X(I_k \otimes z_n)\| > \|X\| - \varepsilon.
\]
Since $\A z_n$ norms $\A z_n \rtimes_r G$, there exist $R \in \Ball(\ROW_k(\A z_n))$ and $C \in \Ball(\COL_k(\A z_n))$ such that
\[
    \|RX(I_k \otimes z_n)C\| > \|X(I_k \otimes z_n)\| - \varepsilon.
\]
Then
\[
    \|RXC\| = \|RX(I_k \otimes z_n)C\| > \|X(I_k \otimes z_n)\| - \varepsilon > \|X\| - 2\varepsilon.
\]
Thus $\A$ norms $\A \rtimes_r G$.
\end{proof}

\begin{theorem}\label{IA}
Suppose that $G \curvearrowright \A$ satisfies Kishimoto's condition. Then $I(\A)$ norms $I(\A) \rtimes_r G$.
\end{theorem}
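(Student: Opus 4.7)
The plan is to exploit the AW*-structure of the injective envelope $I(\A)$ (monotone complete by injectivity) through its type decomposition, and to reduce the problem to type summands already handled by earlier results of the paper. As a preliminary, I would verify that the extended action $G\curvearrowright I(\A)$ itself satisfies Kishimoto's condition. Applying the reformulation of Kishimoto's condition in terms of dependent elements on the injective envelope (the fourth reformulation preceding Diagram \ref{variousimps}) with $\A$ in the role of the ambient algebra translates the hypothesis into ``$I(\alpha_g)$ has no dependent elements in $I(\A)$''; applying it again with $I(\A)$ in that role---and using $I(I(\A))=I(\A)$ because $I(\A)$ is already injective---converts the same statement into Kishimoto's condition for $G\curvearrowright I(\A)$.

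Next, I would invoke the AW*-type decomposition of $I(\A)$. Let $z_n$, for $n\in\{1,2,\ldots\}$, be the central projection supporting the type $\typeI_n$ summand, and let $z_0:=1-\sup_{n\ge 1}z_n$ collect the remaining types ($\typeI_\kappa$ for infinite $\kappa$, $\typeII_1$, $\typeII_\infty$, and $\typeIII$). The family $\{z_n\}_{n\ge 0}$ is then a countable family of mutually orthogonal central projections with supremum $1$, and each $z_n$ is $G$-invariant because automorphisms respect the canonical type decomposition. By Lemma \ref{lemma_C} it then suffices to prove that $I(\A)z_n$ norms $I(\A)z_n\rtimes_r G$ for every non-zero $z_n$.

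For $n\ge 1$, the summand $I(\A)z_n$ is an $n$-homogeneous $C^*$-algebra: a type $\typeI_n$ AW*-algebra has the form $M_n(Z)$ for a commutative AW*-algebra $Z$, so its spectrum is the Stonean (hence compact Hausdorff) spectrum of $Z$ and every irreducible representation is $n$-dimensional. Kishimoto's condition on $G\curvearrowright I(\A)$ restricts to the invariant central summand $I(\A)z_n$---hereditary subalgebras of $I(\A)z_n$ are hereditary in $I(\A)$, and invariance of $z_n$ keeps each $\alpha_g$ within the summand---so by Theorem \ref{discrete crossed products}(iii) the inclusion $I(\A)z_n\subseteq I(\A)z_n\rtimes_r G$ has a unique pseudo-expectation, and Theorem \ref{nhom} delivers the required norming. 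For $z_0$, the summand $I(\A)z_0$ admits no finite-dimensional representations (a finite-dimensional quotient would expose a non-zero direct summand of finite type $\typeI$, incompatible with the construction of $z_0$), so Lemma \ref{guts} gives the norming directly. A final application of Lemma \ref{lemma_C} then yields that $I(\A)$ norms $I(\A)\rtimes_r G$.

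The delicate step, and the main obstacle I anticipate, is justifying that $I(\A)z_0$ admits no finite-dimensional representations. In a von Neumann algebra this is immediate because closed ideals correspond to central projections, but an AW*-algebra can admit norm-closed two-sided ideals that are not central summands, so one has to argue directly from the projection lattice that a putative finite-dimensional quotient of $I(\A)z_0$ would force a non-zero central summand of finite type $\typeI$ inside $I(\A)z_0$, contradicting the construction of $z_0$. The companion structural fact---that each $I(\A)z_n$ for finite $n$ is genuinely $n$-homogeneous---is the classical structure theorem for type $\typeI_n$ AW*-algebras and is routine by comparison.
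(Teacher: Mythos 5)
Your proposal is correct and takes essentially the same route as the paper's proof: decompose the AW*-algebra $I(\A)$ into $G$-invariant central summands by type, treat the type $\typeI_n$ pieces as $n$-homogeneous $C^*$-algebras via Theorem~\ref{nhom} (using that Kishimoto's condition passes to $I(\A)$ and restricts to invariant summands), handle the remaining summands via Lemma~\ref{guts}, and reassemble with Lemma~\ref{lemma_C}. The one step you flag as delicate --- that your combined summand $z_0$ (properly infinite plus finite type $\typeII$) admits no finite-dimensional representations --- is settled in the paper simply by treating the properly infinite and finite type $\typeII$ parts separately and citing Berberian's structure theorems for each.
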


\begin{proof}
Just like von Neumann algebras, $AW^*$-algebras admit a type decomposition \cite[\slashS 15, Theorem 3]{BerberianBook}. Thus there are $G$-invariant central projections $z_{\fin}, z_{\inf} \in I(\A)$ such that $z_{\fin}+z_{\inf}=1$, $I(\A)z_{\fin}$ is finite, and $I(\A)z_{\inf}$ is properly infinite. By \cite[\slashS 17, Theorem 1 (2)]{BerberianBook}, $I(\A)z_{\inf}$ has no finite-dimensional representations, so $I(\A)z_{\inf}$ norms $I(\A)z_{\inf} \rtimes_r G$, according to Lemma \ref{guts}. Now $z_{\fin} = z_{\typeI_{\fin}}+z_{\typeII_{\fin}}$, where $z_{\typeI_{\fin}}, z_{\typeII_{\fin}} \in I(\A)$ are $G$-invariant central projections such that $I(\A)z_{\typeI_{\fin}}$ is type I (and finite) and $I(\A)z_{\typeII_{\fin}}$ is type II (and finite). By \cite[\slashS 19, Theorem 1]{BerberianBook}, $I(\A)z_{\typeII_{\fin}}$ has no finite-dimensional representations, so $I(\A)z_{\typeII_{\fin}}$ norms $I(\A)z_{\typeII_{\fin}} \rtimes_r G$, again using Lemma \ref{guts}. Finally, $z_{\typeI_{\fin}} = \sum_{n=1}^\infty z_{\typeI_n}$, where for each $n \in \bbN$, $z_n \in I(\A)$ is either $0$ or a $G$-invariant central projection such that $I(\A)z_n$ is type I$_n$ \cite[\slashS 18, Theorem 2]{BerberianBook}. If $I(\A)z_n$ is a type I$_n$ $AW^*$-algebra, then it is an $n$-homogeneous $C^*$-algebra \cite[\slashS 18, Proposition 2]{BerberianBook}. Since $G \curvearrowright I(\A)z_n$ satisfies Kishimoto's condition, $I(\A)z_n$ norms $I(\A)z_n \rtimes_r G$, by Theorem \ref{nhom}. Invoking Lemma \ref{lemma_C} completes the proof.
\end{proof}

\begin{question} \label{Q2}
If $I(\A)$ norms $I(\A) \rtimes_r G$, does $\A$ norm $I(\A) \rtimes_r G$ (or at least $\A \rtimes_r G$)?
\end{question}

\begin{question} \label{Q3}
More generally, if $\A \subseteq \B \subseteq B(\H)$ and a (completely isometric) copy of $I(\A)$ in $B(\H)$ norms $\B$, does $\A$ norm $\B$?
\end{question}

As indicated above, a positive solution to Question \ref{Q2} would imply a positive solution to Question \ref{Q1}.

\appendix

\section{Comparisons}

\subsection{Almost Extension Property implies Unique Pseudo-Expectation} \label{A}

\begin{lemma} \label{bijection}
Let $\A \subseteq \B$ be a $C^*$-inclusion and $\phi \in S(\A)$ be a state. Then there exists a bijective correspondence between the state extensions $\varPhi \in S(\B)$ of $\phi$ and the UCP extensions $\theta:\B \to B(\H_\phi)$ of the GNS representation $\pi_\phi:\A \to B(\H_\phi)$. In particular, $\phi$ has a unique state extension if and only if $\pi_\phi$ has a unique UCP extension.
\end{lemma}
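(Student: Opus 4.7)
The plan is to exhibit explicit mutual inverses between state extensions of $\phi$ and UCP extensions of $\pi_\phi$.

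First, given a state extension $\varPhi \in S(\B)$, form the GNS triple $(\pi_\varPhi,\H_\varPhi,\xi_\varPhi)$ and consider the assignment $\pi_\phi(a)\xi_\phi \mapsto \pi_\varPhi(a)\xi_\varPhi$. The identity $\norm{\pi_\phi(a)\xi_\phi}^2 = \phi(a^*a) = \varPhi(a^*a) = \norm{\pi_\varPhi(a)\xi_\varPhi}^2$ (using $\varPhi|_\A = \phi$) shows this is well-defined and isometric on $\pi_\phi(\A)\xi_\phi$, and hence extends to an isometry $V:\H_\phi \to \H_\varPhi$ satisfying $V\xi_\phi = \xi_\varPhi$. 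I would then define $\theta_\varPhi(b) := V^*\pi_\varPhi(b)V$, which is UCP as a compression of a $*$-representation. A direct check using the definition of $V$ gives $\pi_\varPhi(a)V = V\pi_\phi(a)$ for $a \in \A$, whence $\theta_\varPhi|_\A = V^*V\pi_\phi = \pi_\phi$.

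Conversely, given a UCP extension $\theta:\B \to B(\H_\phi)$ of $\pi_\phi$, I would define $\varPhi_\theta(b) := \innerprod{\theta(b)\xi_\phi,\xi_\phi}$. This is manifestly a state on $\B$, and $\varPhi_\theta|_\A = \phi$ because $\theta|_\A = \pi_\phi$ and $\phi(a) = \innerprod{\pi_\phi(a)\xi_\phi,\xi_\phi}$. The composition $\varPhi \mapsto \theta_\varPhi \mapsto \varPhi_{\theta_\varPhi}$ returns $\varPhi$ immediately, since $V\xi_\phi = \xi_\varPhi$ gives $\innerprod{V^*\pi_\varPhi(b)V\xi_\phi,\xi_\phi} = \varPhi(b)$.

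The harder composition is $\theta \mapsto \varPhi_\theta \mapsto \theta_{\varPhi_\theta}$, which reduces to showing that a UCP extension $\theta$ of $\pi_\phi$ is completely determined by its compression to $\xi_\phi$. I expect this to be the main obstacle, and the key tool is the multiplicative domain: because $\theta|_\A = \pi_\phi$ is a $*$-representation, $\A$ lies in the multiplicative domain of $\theta$, and hence for $a_1, a_2 \in \A$ and $b \in \B$ we have $\theta(a_1^* b a_2) = \pi_\phi(a_1)^*\theta(b)\pi_\phi(a_2)$. Taking inner products against $\xi_\phi$ yields
\[
    \innerprod{\theta(b)\pi_\phi(a_2)\xi_\phi,\pi_\phi(a_1)\xi_\phi} = \varPhi_\theta(a_1^* b a_2).
\]
Since $\pi_\phi(\A)\xi_\phi$ is dense in $\H_\phi$, this determines $\theta(b)$ from $\varPhi_\theta$, proving the injectivity needed to conclude that $\theta_{\varPhi_\theta} = \theta$. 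The ``in particular'' clause then follows at once from the bijection.
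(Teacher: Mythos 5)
Your proposal is correct and follows essentially the same route as the paper: the same two explicit constructions ($\varPhi_\theta(b)=\innerprod{\theta(b)\xi_\phi,\xi_\phi}$ and $\theta_\varPhi = V^*\pi_\varPhi(\cdot)V$), with the hard composition handled by the bimodule identity $\theta(a_1^*ba_2)=\pi_\phi(a_1)^*\theta(b)\pi_\phi(a_2)$ and density of $\pi_\phi(\A)\xi_\phi$ — the paper uses exactly this identity implicitly in its chain of matrix-coefficient equalities, while you name it via the multiplicative domain. The only organizational difference is that you deduce $\theta_{\varPhi_\theta}=\theta$ from injectivity of $\theta\mapsto\varPhi_\theta$ together with the easy composition, rather than by direct verification, which is logically equivalent.
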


\begin{proof}
If $\theta:\B \to B(\H_\phi)$ is a UCP extension of $\pi_\phi$, then the formula
\[
    \varPhi_\theta(b) = \langle \theta(b)\xi_\phi, \xi_\phi \rangle, ~ b \in \B,
\]
defines a state extension $\varPhi_\theta \in S(\B)$ of $\phi$. Conversely, if $\varPhi \in S(\B)$ is a state extension of $\phi$ with GNS representation $\pi_\varPhi:\B \to B(\H_\varPhi)$, then
\[
    \|\pi_\varPhi(a)\xi_\varPhi\|^2 = \varPhi(a^*a) = \phi(a^*a) = \|\pi_\phi(a)\xi_\phi\|^2, ~ a \in \A,
\]
and so there exists a linear isometry $V_\varPhi:\H_\phi \to \H_\varPhi$ such that
\[
    V_\varPhi\pi_\phi(a)\xi_\phi = \pi_\varPhi(a)\xi_\varPhi, ~ a \in \A.
\]
Then the formula
\[
    \theta_\varPhi(b) = V_\varPhi^*\pi_\varPhi(b)V_\varPhi, ~ b \in \B,
\]
defines a UCP extension $\theta_\varPhi:\B \to B(\H_\phi)$ of $\pi_\phi$. To see that these mappings are mutually inverse, note that if $\theta:\B \to B(\H_\phi)$ is a UCP extension of $\pi_\phi$, then for all $a_1, a_2 \in \A$,
\begin{eqnarray*}
    \langle \theta_{\varPhi_{\theta}}(b)\pi_\phi(a_1)\xi_\phi, \pi(a_2)\xi_\phi \rangle
    &=& \langle V_{\varPhi_{\theta}}^*\pi_{\varPhi_{\theta}}(b)V_{\varPhi_{\theta}}\pi_\phi(a_1)\xi_\phi, \pi_\phi(a_2)\xi_\phi \rangle\\
    &=& \langle \pi_{\varPhi_{\theta}}(b)\pi_{\varPhi_\theta}(a_1)\xi_{\varPhi_{\theta}}, \pi_{\varPhi_{\theta}}(a_2)\xi_{\varPhi_{\theta}} \rangle\\
    &=& \varPhi_{\theta}(a_2^*ba_1) = \langle \theta(a_2^*ba_1)\xi_\phi, \xi_\phi \rangle\\
    &=& \langle \pi_\phi(a_2)^*\theta(b)\pi_\phi(a_1)\xi_\phi, \xi_\phi \rangle\\
    &=& \langle \theta(b)\pi_\phi(a_1)\xi_\phi, \pi_\phi(a_2)\xi_\phi \rangle,
\end{eqnarray*}
so that $\theta_{\varPhi_{\theta}}(b) = \theta(b)$. Likewise, if $\varPhi \in S(\B)$ is a state extension of $\phi$, then
\[
    \varPhi_{\theta_{\varPhi}}(b) = \langle \theta_{\varPhi}(b)\xi_\phi, \xi_\phi \rangle
    = \langle V_\varPhi^*\pi_\varPhi(b)V_\varPhi\xi_\phi, \xi_\phi \rangle
    = \langle \pi_\varPhi(b)\xi_\varPhi, \xi_\varPhi \rangle = \varPhi(b).
\]
\end{proof}

\begin{theorem} \label{AEP implies !PsExp}
If the $C^*$-inclusion $\A \subseteq \B$ satisfies the almost extension property, then it has a unique pseudo-expectation.
\end{theorem}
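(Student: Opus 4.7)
The plan is to show that any two pseudo-expectations $\theta_1, \theta_2 \colon \B \to I(\A)$ must coincide. By the \textsf{AEP}, there is a weak-$*$ dense $S \subseteq PS(\A)$ whose elements have unique pure-state extensions to $\B$. I first upgrade this to uniqueness of arbitrary state extensions: for $\phi \in S$, the state-extension set $E(\phi) := \{\Psi \in S(\B) \colon \Psi|_\A = \phi\}$ is weak-$*$ compact and convex, and its extreme points lie in $PS(\B)$. Indeed, if $\Psi \in E(\phi)$ decomposes as $t\Psi_1 + (1-t)\Psi_2$ in $S(\B)$, then purity of $\phi$ forces $\Psi_1|_\A = \Psi_2|_\A = \phi$, so $\Psi_1, \Psi_2 \in E(\phi)$, contradicting extremality of $\Psi$ in $E(\phi)$. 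Hence $E(\phi)$ has a single extreme point, and Krein--Milman yields $E(\phi) = \{\tilde{\phi}\}$. By Lemma~\ref{bijection}, $\pi_\phi$ then admits a unique UCP extension to $\B$.

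Next, I squeeze $\theta_1$ and $\theta_2$ through GNS representations. For each $\phi \in S$, injectivity of $B(\H_\phi)$ yields a UCP extension $\rho_\phi \colon I(\A) \to B(\H_\phi)$ of $\pi_\phi$. The compositions $\rho_\phi \circ \theta_1$ and $\rho_\phi \circ \theta_2$ are both UCP extensions of $\pi_\phi$ to $\B$, so by the previous paragraph they coincide, giving
\[
    \rho_\phi(\theta_1(b) - \theta_2(b)) = 0 \qquad \text{for every } b \in \B \text{ and every } \phi \in S.
\]
I then assemble the $\rho_\phi$ into a single UCP map $\Pi := \bigoplus_{\phi \in S} \rho_\phi \colon I(\A) \to \bigoplus_{\phi \in S} B(\H_\phi)$. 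The restriction $\Pi|_\A = \bigoplus_{\phi \in S} \pi_\phi$ is a $*$-homomorphism, and it is faithful: given $0 \neq a \in \A$, the function $\phi \mapsto \phi(a^*a)$ is weak-$*$ continuous on $PS(\A)$ and attains $\|a\|^2$ somewhere, so weak-$*$ density of $S$ yields $\phi \in S$ with $\phi(a^*a) > 0$, whence $\pi_\phi(a) \neq 0$. Consequently $\Pi|_\A$ is a faithful $*$-homomorphism, hence completely isometric on $\A$.

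The pivotal step --- the one I would flag as the main obstacle to a truly elementary argument --- is to promote $\Pi$ to a complete isometry on all of $I(\A)$. This is exactly the content of Hamana's essentiality of the inclusion $\A \subseteq I(\A)$: any UCP map out of $I(\A)$ whose restriction to $\A$ is completely isometric must be completely isometric on $I(\A)$, and in particular injective. Granting this, the displayed identity above forces $\Pi(\theta_1(b) - \theta_2(b)) = 0$ and hence $\theta_1(b) = \theta_2(b)$ for every $b \in \B$. A naive attempt to bypass essentiality by separating points of $I(\A)$ via the vector states $\omega \circ \rho_\phi$ directly runs aground, because state extensions from $\A$ to $I(\A)$ are generally not unique; essentiality is the right tool to convert faithfulness on $\A$ into faithfulness on $I(\A)$ without ever having to control such extensions one state at a time.
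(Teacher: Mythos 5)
Your proof is correct and follows essentially the same route as the paper's: both reduce the problem to uniqueness of UCP extensions of each GNS representation $\pi_\phi$ via Lemma~\ref{bijection} and then aggregate over a weak-$*$ dense family of uniquely extending pure states, whose direct sum is faithful on $\A$. The only real difference is in the last step --- the paper realizes $I(\A)$ concretely as a minimal injective operator system inside $\bigoplus_\phi B(\H_\phi)$ so that the pseudo-expectation itself splits as $\bigoplus_\phi \theta_\phi$, whereas you keep $I(\A)$ abstract and invoke Hamana's essentiality of $\A \subseteq I(\A)$ to make the assembled map $\Pi$ injective; both are valid appeals to the same underlying theory, and your explicit Krein--Milman upgrade from unique pure-state extension to unique state extension makes precise a point the paper leaves implicit when it applies Lemma~\ref{bijection}.
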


\begin{proof}
Let $PS(\A \uparrow \B)$ denote the set of pure states on $\A$ which extend uniquely to pure states on $\B$. Assume that $PS(\A \uparrow \B)$ is weak-$*$ dense in $PS(\A)$. It follows that
\[
    \pi := \bigoplus_{\phi \in PS(\A \uparrow \B)} \pi_\phi:\A \to \bigoplus_{\phi \in PS(\A \uparrow \B)} B(\H_\phi)
\]
is a faithful $*$-homomorphism. Since $\bigoplus_{\phi \in PS(\A \uparrow \B)} B(\H_\phi)$ is an injective $C^*$-algebra, there exists a minimal injective operator system $\pi(\A) \subseteq \S \subseteq \bigoplus_{\phi \in PS(\A \uparrow \B)} B(\H_\phi)$. Now let $\theta:\B \to \S$ be a UCP map such that $\theta|_{\A} = \pi$. Then $\theta = \bigoplus_{\phi \in PS(\A \uparrow \B)} \theta_\phi$, where for each $\phi \in PS(\A \uparrow \B)$, $\theta_\phi:\B \to B(\H_\phi)$ is a UCP extension of $\pi_\phi$. By Lemma \ref{bijection}, each $\theta_\phi$ is uniquely determined. Thus $\theta$ is uniquely determined.
\end{proof}

\subsection{Topological Freeness implies Kishimoto's Condition} \label{B}

We begin with a technical lemma inspired by \cite[Lemma 2]{ArchboldSpielberg1994}.

\begin{lemma} \label{Archbold Spielberg lemma}
Let $(\A,G,\alpha)$ be a discrete $C^*$-dynamical system, $\pi:\A \to B(\H)$ be a non-zero irreducible representation, and $g \in G$. Then the following are equivalent:
\begin{enumerate}
\item[i.] $[\pi \circ \alpha_g^{-1}] \neq [\pi]$;
\item[ii.] If $\Theta:\A \rtimes_f G \to B(\H)$ is a UCP map extending $\pi$, then $\Theta(g) = 0$;
\item[iii.] If $\theta:\A \rtimes_r G \to B(\H)$ is a UCP map extending $\pi$, then $\theta(g) = 0$.
\end{enumerate}
\end{lemma}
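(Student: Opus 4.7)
The plan is to prove the circular chain (i)$\Rightarrow$(ii)$\Rightarrow$(iii)$\Rightarrow$(i). The two cyclic-module implications are nearly formal; the substantive content is confined to (i)$\Rightarrow$(ii) (a Schur-type argument via multiplicative domains) and to (iii)$\Rightarrow$(i) (a covariant-representation construction extended via Arveson).

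For (i)$\Rightarrow$(ii), I would exploit the fact that $\Theta$ restricts to the $*$-homomorphism $\pi$ on $\A$, so every element of $\A$ lies in the multiplicative domain of $\Theta$. Consequently, for all $a \in \A$ the relations $\Theta(ga)=\Theta(g)\pi(a)$ and $\Theta(\alpha_g(a)g)=\pi(\alpha_g(a))\Theta(g)$ hold, and the crossed-product commutation $ga=\alpha_g(a)g$ gives
\[
    \Theta(g)\pi(a)=\pi(\alpha_g(a))\Theta(g), \quad a\in\A.
\]
Reindexing by $b=\alpha_g(a)$, this says that $\Theta(g)$ is an intertwiner from $\pi\circ\alpha_g^{-1}$ to $\pi$. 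Since both are irreducible and, by (i), mutually inequivalent, Schur's lemma forces $\Theta(g)=0$.

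For (ii)$\Rightarrow$(iii), I would simply compose with the canonical surjection $q:\A\rtimes_f G\to\A\rtimes_r G$: given $\theta$ as in (iii), the UCP map $\theta\circ q$ extends $\pi$ and satisfies $(\theta\circ q)(g)=\theta(g)$, so (ii) applied to $\theta\circ q$ yields the conclusion.

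For (iii)$\Rightarrow$(i) I would argue the contrapositive. Assume $[\pi\circ\alpha_g^{-1}]=[\pi]$, implemented by a unitary $U\in B(\H)$ with $U\pi(a)U^*=\pi(\alpha_g(a))$. Let $H=\langle g\rangle\le G$. On $\H$ I will build a covariant representation $(\pi,\rho)$ of $(\A,H)$ with $\rho(g^k)=U^k$. If $H$ is infinite cyclic there is nothing to check; if $H$ has finite order $n$, then $U^n$ commutes with $\pi(\A)$ and so equals a scalar $\lambda I$ by irreducibility, and replacing $U$ by $\mu U$ with $\mu^n=\lambda^{-1}$ arranges $U^n=I$. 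Integrating yields a $*$-homomorphism $\sigma:\A\rtimes_r H\to B(\H)$ (using that cyclic $H$ is amenable, so $\A\rtimes_f H=\A\rtimes_r H$) which extends $\pi$ and sends $g\mapsto U\neq 0$. Using the standard embedding $\A\rtimes_r H\hookrightarrow \A\rtimes_r G$ and Arveson's extension theorem, $\sigma$ extends to a UCP map $\theta:\A\rtimes_r G\to B(\H)$ with $\theta|_\A=\pi$ and $\theta(g)=U\neq 0$, contradicting (iii).

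I expect the main obstacle to be the finite-order case in (iii)$\Rightarrow$(i), where one must rescale $U$ to honor the cyclic group relation, a step that genuinely uses the irreducibility of $\pi$. A secondary point worth noting explicitly is the standard embedding $\A\rtimes_r H\subseteq\A\rtimes_r G$ for subgroups $H\le G$, which underlies the Arveson extension step.
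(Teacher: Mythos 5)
Your proof is correct and follows the same chain of implications as the paper's: the multiplicative-domain/Schur argument for (i)$\Rightarrow$(ii) and the composition with the canonical quotient for (ii)$\Rightarrow$(iii) are exactly the paper's steps (the paper cites \cite[Lemma 2.1]{Zarikian2019b} for the Schur step). For (iii)$\Rightarrow$(i) the paper simply cites the proof of \cite[Theorem 2.4]{Zarikian2019b}, whereas you supply the construction in full --- the covariant representation of $\langle g\rangle$, the rescaling of $U$ in the finite-order case via irreducibility, amenability of the cyclic subgroup, and Arveson extension along $\A\rtimes_r\langle g\rangle\subseteq\A\rtimes_r G$ --- and these details are all sound.
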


\begin{proof}
(i $\implies$ ii) Suppose $\Theta:\A \rtimes_f G \to B(\H)$ is a UCP map extending $\pi$ and such that $\Theta(g) \neq 0$. Then
\[
    \Theta(g)\pi(a) = \Theta(ga) = \Theta(\alpha_g(a)g) = \pi(\alpha_g(a))\Theta(g), ~ a \in \A.
\]
By \cite[Lemma 2.1]{Zarikian2019b}, $[\pi \circ \alpha_g^{-1}] = [\pi]$.

(ii $\implies$ iii) Suppose $\theta:\A \rtimes_r G \to B(\H)$ is a UCP map extending $\pi$. Then $\theta \circ \lambda:\A \rtimes_f G \to B(\H)$ is a UCP map extending $\pi$, where $\lambda:\A \rtimes_f G \to \A \rtimes_r G$ is the unique $*$-epimorphism that fixes $C_c(G,\A)$. By assumption, $(\theta \circ \lambda)(g) = 0$. Thus $\theta(g) = \theta(\lambda(g)) = 0$.

(iii $\implies$ i) Suppose $[\pi \circ \alpha_g^{-1}] = [\pi]$. By the proof of the implication (ii $\implies$ iii) in \cite[Theorem 2.4]{Zarikian2019b}, there exists a UCP map $\theta:\A \rtimes_r G \to B(\H)$ extending $\pi$ and such that $\theta(g) \neq 0$.
\end{proof}

As mentioned in the main text, the following result is also a consequence of the much stronger result \cite[Corollary 4.8]{KwasniewskiMeyer2022}.

\begin{theorem} \label{top free implies Kishimoto}
If $G \curvearrowright \mathcal{\widehat{A}}$ is topologically free, then $G \curvearrowright \A$ satisfies Kishimoto's condition.
\end{theorem}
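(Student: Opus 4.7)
The plan is to use Lemma~\ref{Archbold Spielberg lemma} to extract structural information about UCP extensions under the disjointness hypothesis coming from topological freeness, and then to upgrade that representation-level information to an element-level norm bound. Fix $g \in G\setminus\{e\}$, a nonzero hereditary subalgebra $\D \in \bbH(\A)$, and $\eps > 0$; the goal is to produce $d \in \D_+^1$ with $\|d\alpha_g(d)\| < \eps$. Since $\alpha_g = \Ad(g)$ inside $\A \rtimes_r G$ and $g$ is unitary, $\|d\alpha_g(d)\| = \|dgd\|$, so I would work inside the crossed product throughout.

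First I would set up the spectral picture. Let $U := \{[\pi] \in \mathcal{\widehat{A}} : \pi(\D) \neq 0\}$, the nonempty open subset of $\mathcal{\widehat{A}}$ corresponding to the ideal $\J := \overline{\A\D\A}$ generated by $\D$. By topological freeness, the set of non-fixed points $V_g := \{[\pi] : [\pi \circ \alpha_g^{-1}] \neq [\pi]\}$ is dense in $\mathcal{\widehat{A}}$, so I can choose $[\pi] \in U \cap V_g$. Then $\pi|_{\J}$ is irreducible, $\pi(\D) \neq 0$, and $\pi$ is disjoint from $\pi \circ \alpha_g^{-1}$ as irreducible representations of $\A$ on $\H_\pi$. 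Applying Lemma~\ref{Archbold Spielberg lemma} to $\pi$ and $g$, every UCP extension $\theta : \A \rtimes_r G \to B(\H_\pi)$ of $\pi|_\A$ must satisfy $\theta(g) = 0$; consequently $\theta(agb) = \pi(a)\theta(g)\pi(b) = 0$ for all $a, b \in \A$, and in particular $\theta$ annihilates every element of the form $dgd$ with $d \in \D$.

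The crux is converting this vanishing at $\theta(g)$ into an honest $\A$-norm bound. One direct route combines Kadison's transitivity theorem for the disjoint pair $(\pi, \pi \circ \alpha_g)$ with a Dauns--Hofmann localization: choose a unit cyclic vector $\xi \in \H_\pi$ for $\pi(\D)$, use transitivity to find $d \in \D_+^1$ with $\pi(d)\xi$ close to $\xi$ and $\pi(\alpha_g(d))\xi$ close to $0$, and control $\sigma(d\alpha_g(d))$ uniformly in $\sigma$ by restricting the ``support'' of $d$ to a neighbourhood of $[\pi]$. A cleaner alternative, which I would actually pursue, is to invoke \cite[Corollary~4.8]{KwasniewskiMeyer2022}: topological freeness of $G \curvearrowright \mathcal{\widehat{A}}$ implies aperiodicity of $\A \subseteq \A \rtimes_r G$ (Definition~\ref{aperiod}). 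Applying aperiodicity to $g \in \A \rtimes_r G$ yields $d \in \D_+^1$ and $a \in \A$ with $\|dgd - a\| < \eps$; since $\bbE(dgd) = 0$ (as $g \neq e$) and $\bbE(a) = a$, contractivity of $\bbE$ gives $\|a\| = \|\bbE(dgd - a)\| < \eps$, whence $\|d\alpha_g(d)\| = \|dgd\| \leq \|dgd - a\| + \|a\| < 2\eps$, which is Kishimoto's condition up to a harmless rescaling of $\eps$.

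The main obstacle in the direct route is that $\|d\alpha_g(d)\|_\A = \sup_\sigma \|\sigma(d\alpha_g(d))\|$ is taken over \emph{all} irreducible representations $\sigma$, including those whose class is an $\hat{\alpha}_g$-fixed point, while topological freeness only guarantees density of non-fixed points in $\mathcal{\widehat{A}}$. Producing $d$ whose spectral support avoids the fixed-point locus requires separating $[\pi]$ from $\hat{\alpha}_g([\pi])$ by disjoint open sets, which is immediate in the abelian case (Corollary~\ref{1hom}) but delicate in general because $\mathcal{\widehat{A}}$ is only $T_0$. The aperiodicity route bypasses this issue entirely by absorbing the spectral analysis into the Kwaśniewski--Meyer framework and leaving only the elementary conditional-expectation computation to finish the argument.
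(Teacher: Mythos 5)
Your closing argument is mathematically sound: if $\A \subseteq \A \rtimes_r G$ is aperiodic, then applying Definition~\ref{aperiod} to the unitary $g$ and using $\bbE(dgd) = \bbE(d\alpha_g(d)g) = 0$ together with contractivity of $\bbE$ gives $\|a\| < \eps$ and hence $\|d\alpha_g(d)\| = \|dgd\| < 2\eps$, which is Kishimoto's condition. (Once you have aperiodicity of the inclusion you could equally well finish by citing Theorem~\ref{discrete crossed products}(iii), which already records that aperiodicity, unique pseudo-expectation, and Kishimoto's condition are equivalent for these inclusions.) The problem is that the route you say you would ``actually pursue'' obtains aperiodicity by citing \cite[Corollary~4.8]{KwasniewskiMeyer2022}, and that is precisely the external result the paper flags in Section~\ref{DCP} (``It also follows from [Corollary 4.8]\dots'') and that Appendix~\ref{B} exists to avoid. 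All of the substance of the implication from topological freeness to a spectral/outerness condition is outsourced to that citation, so what you have is a correct reduction of the theorem to a known stronger result, not an independent proof. Your abandoned ``direct route'' correctly diagnoses the real obstruction --- one must control $\|\sigma(d\alpha_g(d))\|$ over \emph{all} irreducible representations $\sigma$, including those fixed by $\hat{\alpha}_g$, and $\mathcal{\widehat{A}}$ is only $T_0$ --- but you do not resolve it.

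The paper's own proof takes a different and self-contained path (modulo Theorem~\ref{discrete crossed products}(iii)): it shows directly that every pseudo-expectation $\theta:\A\rtimes_r G \to I(\A)$ equals $\bbE$. For each $g \neq e$ it assembles a representation $\pi_g = \bigoplus_i \pi_i$ from one irreducible representative of each class in the dense set $\Fix(g)^c$; this is faithful because $\{[\pi] : \|\pi(a)\| = 0\}$ is closed in $\mathcal{\widehat{A}}$. Since $\bigoplus_i B(\H_i)$ is injective, the injective envelope $I(\A)$ embeds there compatibly with $\pi_g$, so $\iota_g\circ\theta$ splits coordinatewise into UCP extensions of the $\pi_i$, and Lemma~\ref{Archbold Spielberg lemma} forces each coordinate of $\theta(g)$ to vanish; hence $\theta(g)=0$ for all $g\neq e$ and $\theta = \bbE$. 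Kishimoto's condition then follows from Theorem~\ref{discrete crossed products}(iii). Note that the paper uses Lemma~\ref{Archbold Spielberg lemma} globally, over the whole dense set of non-fixed classes at once, with the rigidity of the injective envelope doing the work of localization --- this is the idea missing from your direct route. If you want your argument to stand as an alternative proof, you must supply a proof of ``topological freeness $\Rightarrow$ aperiodicity'' rather than citing it.
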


\begin{proof}
By Theorem \ref{discrete crossed products} (iii), it suffices to show that $\A \subseteq \A \rtimes_r G$ has a unique pseudo-expectation. To that end, let $\theta:\A \rtimes_r G \to I(\A)$ be a pseudo-expectation for $\A \subseteq \A \rtimes_r G$. Fix $g \in G\setminus\{e\}$. By assumption, $\Fix(g)^c$ is dense in $\mathcal{\widehat{A}}$. Let $\{\pi_i: i \in I_g\}$ be a family of non-zero irreducible representations containing one element from each unitary equivalence class in $\Fix(g)^c$, and define
\[
    \pi_g := \bigoplus_{i \in I_g} \pi_i:\A \to \bigoplus_{i \in I_g} B(\H_i).
\]
Then $\pi_g$ is a faithful $*$-homomorphism. Indeed, for $a \in \A$,
\[
    \pi_g(a) = 0 \implies \Fix(g)^c \subseteq \{[\pi] \in \mathcal{\widehat{A}}: \|\pi(a)\| = 0\},
\]
and the latter set is closed in $\mathcal{\widehat{A}}$ by \cite[Proposition 3.3.2]{Dixmier1977}. Since $\bigoplus_{i \in I_g} B(\H_i)$ is an injective $C^*$-algebra, we have that $I(\pi_g(\A)) \subseteq \bigoplus_{i \in I_g} B(\H_i)$ (as an operator subsystem). By the uniqueness of the injective envelope, there exists a complete order isomorphism $\iota_g:I(\A) \to I(\pi_g(\A))$ such that ${\iota_g}|_{\A} = \pi_g$. Then $\iota_g \circ \theta:\A \rtimes_r G \to I(\pi_g(\A))$ is a UCP map extending $\pi_g$. We may write $\iota_g \circ \theta = \bigoplus_{i \in I_g} \theta_i$, where each $\theta_i:\A \rtimes_r G \to B(\H_i)$ is a UCP map extending $\pi_i$. By Lemma \ref{Archbold Spielberg lemma}, $\theta_i(g) = 0$, since $g \cdot [\pi_i] \neq [\pi_i]$. Thus $\iota_g(\theta(g)) = 0$, which implies $\theta(g) = 0$. Since the choice of $g$ was arbitrary, $\theta = \bbE$, the canonical faithful conditional expectation of $\A \rtimes_r G$ onto $\A$.
\end{proof}

\subsection{Kishimoto's Condition implies Proper Outerness} \label{C}

\begin{theorem}[\cite{Kishimoto1982}, Remark 2.5]
Let $\alpha \in \Aut(\A)$. If $\alpha$ satisfies Kishimoto's condition, then $\alpha$ is properly outer.
\end{theorem}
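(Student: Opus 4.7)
Plan:

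The proof proceeds by contrapositive. Suppose $\alpha$ is not properly outer; then there is a non-zero $\alpha$-invariant ideal $\J\subseteq\A$ and a unitary $u\in M(\J)$ with
\[
c\;:=\;\|\alpha|_\J-\Ad(u)\|\;<\;2.
\]
The goal is to show that Kishimoto's condition fails for $\beta:=\alpha|_\J$ on $\J$. Since every hereditary subalgebra of $\J$ is a hereditary subalgebra of $\A$, this will violate Kishimoto's condition for $\alpha$ as well.

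The strategy is to find a unitary implementer of $\beta$ in the enveloping von Neumann algebra $\J^{**}$. Let $\beta^{**}\in\Aut(\J^{**})$ be the unique weak-$*$ continuous extension, and view $u\in M(\J)\subseteq \J^{**}$. By Kaplansky density together with weak-$*$ lower semicontinuity of the operator norm,
\[
\|\beta^{**}-\Ad(u)\|_{\J^{**}}\;\le\;c\;<\;2.
\]
The classical Kadison--Ringrose theorem for automorphisms of von Neumann algebras then produces a unitary $w\in\J^{**}$ with $\Ad(u^*)\beta^{**}=\Ad(w)$. Setting $v:=uw\in\J^{**}$, one obtains a unitary $v$ implementing $\beta^{**}$, and in particular $\beta(x)v=vx$ for all $x\in\J$.

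Now we translate this back to Kishimoto's condition. For each $d\in\J_+^1$, the relation $vd=\beta(d)v$ yields
\[
dvd\;=\;d(vd)\;=\;d\beta(d)v,
\]
so $\|dvd\|\le \|d\beta(d)\|\cdot\|v\|=\|d\beta(d)\|$, with norms computed in $\J^{**}$. If Kishimoto's condition held for $\beta$, then for every $\eps>0$ and every $\D\in\bbH(\J)$ there would exist $d\in \D_+^1$ with $\|dvd\|<\eps$. Adapting the proof of the Lemma preceding this section to $v\in \J^{**}$ --- splitting $v$ into self-adjoint parts $h_1:=(v+v^*)/2$ and $h_2:=(v-v^*)/(2i)$, passing to a spectral projection $p\in \J^{**}$ of whichever of $h_1,h_2$ has norm $\ge 1/\sqrt{2}$, and working inside the corner $p\J^{**}p$ --- one would conclude $\|v\|\le 2\eps$ for every $\eps>0$, contradicting $\|v\|=1$.

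The main obstacle lies in the very last step: the Lemma's argument requires Kishimoto's condition for the hereditary $p\J^{**}p$ of $\J^{**}$, whereas our hypothesis produces elements $d$ only from hereditary subalgebras of $\J$. Bridging this gap amounts to verifying that the hereditary subalgebra $p\J^{**}p\cap\J$ of $\J$ is non-zero. This is where the normalising relation $v\J v^*=\J$ (equivalent to $\beta^{**}|_\J=\beta$) is essential: it lets one realise the spectral projection $p$ as a strong-operator limit of continuous functional calculus applied to hermitian elements of the form $(e_\lambda v e_\mu+e_\mu v^*e_\lambda)/2$, with $(e_\lambda)$ an approximate identity of $\J$, and thereby extract a non-zero positive element of $\J$ sitting below $p$ in $\J^{**}$.
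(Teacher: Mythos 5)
Your reduction to the bidual is fine as far as it goes: $\|\beta^{**}-\Ad(u)\|=\|\beta-\Ad(u)\|<2$ because the bidual of a bounded map has the same norm, and the Kadison--Ringrose theorem then makes $\Ad(u^*)\circ\beta^{**}$, hence $\beta^{**}$, inner on $\J^{**}$. The genuine gap is the one you flag at the end, and your proposed bridge does not close it. The Lemma's argument produces the hereditary subalgebra $\D=(p\A^{**}p)\cap\A$ and proves it non-zero precisely because $f(a)$ lies in $\A$ when $a$ does; for your unitary $v\in\J^{**}$ the corresponding set $(p\J^{**}p)\cap\J$ can be $\{0\}$ (already for $\J=C[0,1]$ and $p$ a point mass), and then Kishimoto's condition, which only quantifies over hereditary subalgebras of $\J$, says nothing about $\|dvd\|$. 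Your repair founders at the first step: since $v=uw$ with $w\in\J^{**}$ rather than $M(\J)$, the elements $e_\lambda v e_\mu$ need not lie in $\J$ at all, and even granting some approximation of $p$ from inside $\J$, ``a non-zero positive element of $\J$ sitting below $p$'' is exactly the assertion $(p\J^{**}p)\cap\J\neq\{0\}$ that can fail. What your route really requires is a theorem to the effect that Kishimoto's condition forbids a unitary implementer of $\beta$ in $\J^{**}$; no such statement is proved or cited, and the bidual is the wrong completion for it --- Kishimoto's condition is captured by the injective envelope $I(\J)$ (Hamana), not by $\J^{**}$. (A purely quantitative fallback using $u\in M(\J)$ itself, where the Lemma does apply, only yields $\|\alpha|_\J-\Ad(u)\|\geq 1/2$ rather than $2$, so it cannot substitute either.)

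For contrast, the paper's proof follows the same contrapositive ``make it inner upstairs'' strategy but chooses $I(\J)$ as the upstairs algebra: it writes $\alpha|_\J=\Ad(u)\circ\exp(\delta)$ with $\delta$ a $*$-derivation of $\J$ itself, extends $\delta$ to $I(\J)$ where derivations of $AW^*$-algebras are inner, concludes that $I(\alpha|_\J)$ is inner, and then invokes Hamana's theorem to get $\Gamma_{\mathrm{Bor}}(\alpha|_\J)=\{1\}$, which contradicts the Borchers-spectrum reformulation of Kishimoto's condition. That last citation supplies exactly the link between ``inner in the larger algebra'' and ``Kishimoto fails'' that is missing from your argument. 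To salvage your write-up you would either need to prove the bidual analogue of that link or reroute through $I(\J)$.
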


\begin{proof}
Suppose there exists a non-zero $\alpha$-invariant ideal $\J \subseteq \A$ and a unitary $u \in M(\J)$ such that $\|\alpha|_{\J} - \Ad(u)\| < 2$. By \cite[Lemma 1.3]{SaitoWright1984}, we may assume that $M(\J) \subseteq I(\J)$. By \cite[Theorem 8.7.7]{PedersenBook}, there exists a $*$-derivation $\delta:\J \to \J$ such that $\alpha|_{\J} = \Ad(u) \circ \exp(\delta)$. By \cite[Theorem 2.1]{HamanaOkayasuSaito1982}, there exists a unique $*$-derivation $I(\delta):I(\J) \to I(\J)$ extending $\delta$. By \cite[Theorem 2]{Olesen1974}, there exists $h \in I(\J)_{sa}$ such that $I(\delta)(x) = i(hx-xh)$ for all $x \in I(\J)$. By \cite[Exercise 10.5.61]{KadisonRingroseVolII}, $\exp(I(\delta)) = \Ad(v)$, where $v = e^{ih} \in I(\J)$. Then $I(\alpha|_{\J}) = \Ad(uv)$, by uniqueness, which implies $\Gamma_{\text{Bor}}(\alpha|_{\J}) = \{1\}$ by \cite[Theorem 7.4]{Hamana1985}.
\end{proof}

\subsection{Essential Freeness vs.\ Residual Proper Outerness} \label{D}

Recall that $G \curvearrowright \mathcal{\widehat{A}}$ is \text{essentially free} if $G \curvearrowright D$ is free on a dense $G$-invariant set $D \subseteq \mathcal{\widehat{A}}$, and that $G \curvearrowright \A$ is \text{residually properly outer} if for any $g \in G\setminus\{e\}$ and any $\alpha_g$-invariant ideal $\J \subsetneq \A$, the induced automorphism $\dot{\alpha}_g \in \Aut(\A/\J)$ is properly outer. We claim that the following implications hold:
\[
\boxed{
\xymatrix{
    & \text{essentially free} \ar@{=>}[dr] & \\
    \text{free} \ar@{=>}[ur]\ar@{=>}[dr] & & \text{properly outer}\\
    & \text{residually properly outer} \ar@{=>}[ur] & \\
}}
\]
Only the implication ``free'' implies ``residually properly outer'' requires further explanation. Indeed, if $\alpha \in \Aut(\A)$ and $\J \subsetneq \A$ is an $\alpha$-invariant ideal, then by \cite[Proposition 3.2.1]{Dixmier1977}, $\hat{\dot{\alpha}} \in \Homeo(\mathcal{\widehat{A/J}})$ may be identified with $\hat{\alpha}|_{\mathcal{\widehat{A}}_{\J}} \in \Homeo(\mathcal{\widehat{A}}_{\J})$, where
\[
    \mathcal{\widehat{A}}_{\J} = \{[\pi] \in \mathcal{\widehat{A}}: \pi(\J) = \{0\}\},
\]
an $\hat{\alpha}$-invariant closed subset of $\mathcal{\widehat{A}}$. It follows that if $\hat{\alpha} \in \Homeo(\mathcal{\widehat{A}})$ is free, then $\hat{\dot{\alpha}}$ is free, which implies $\dot{\alpha}$ is properly outer. Thus $\alpha$ is residually properly outer when $\hat{\alpha}$ is free.\\

Now we examine the relationship between essential freeness and residual proper outerness.

When $\A$ is abelian, there are many ideals, and so residual proper outerness is a very strong condition. In fact, it is strictly stronger than essential freeness:

\begin{proposition}
Let $G \curvearrowright X$ be the action of a discrete group on a compact Hausdorff space. Then $G \curvearrowright C(X)$ is residually properly outer if and only if $G \curvearrowright X$ is free.
\end{proposition}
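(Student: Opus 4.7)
The plan is to prove both implications directly, with essentially no technical work required in either direction.

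For $(\Leftarrow)$, I would simply invoke the general observation made just before the statement: for \emph{any} $C^*$-dynamical system $(\A, G, \alpha)$, freeness of $G \curvearrowright \mathcal{\widehat{A}}$ already implies residual proper outerness of $G \curvearrowright \A$. Since $\widehat{C(X)}$ is canonically identified with $X$ (with $\hat{\alpha}_g$ corresponding to the given action of $g$), this gives the implication immediately, with no further work.

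For $(\Rightarrow)$, my plan is to argue by contrapositive. Suppose some $g \in G \setminus \{e\}$ has a fixed point $x_0 \in X$; the idea is to exhibit an $\alpha_g$-invariant proper ideal $\J \subsetneq C(X)$ on which the induced automorphism fails to be properly outer. The natural candidate is the maximal ideal $\J := \{f \in C(X) : f(x_0) = 0\}$. Invariance under $\alpha_g$ follows at once from $g \cdot x_0 = x_0$: for $f \in \J$,
\[
    \alpha_g(f)(x_0) = f(g^{-1} \cdot x_0) = f(x_0) = 0.
\]
The quotient $C(X)/\J \cong \bbC$ admits only one $*$-automorphism, namely the identity, so $\dot{\alpha}_g = \id = \Ad(1)$. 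Taking $\K = \bbC$ as the required non-zero invariant ideal of $C(X)/\J$ and $u = 1$ as the unitary in $M(\K) = \bbC$, we get $\|\dot{\alpha}_g - \Ad(u)\| = 0 \neq 2$, so $\dot{\alpha}_g$ is not properly outer, contradicting residual proper outerness.

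There is essentially no serious obstacle here: the proof rests on the abundance of maximal ideals in abelian $C^*$-algebras (one for each point of $X$) together with the triviality of $\Aut(\bbC)$. More broadly, the same mechanism shows that any non-trivial closed $g$-invariant subset $Y \subseteq X$ on which $g|_Y$ has a fixed point produces an invariant proper ideal obstructing proper outerness, matching the abelian-case equivalence (Theorem \ref{equivalences}(i)) between proper outerness and topological freeness and explaining why, in the abelian setting, residual proper outerness collapses all the way down to genuine freeness of the topological action.
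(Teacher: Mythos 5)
Your proof is correct and follows essentially the same route as the paper: the forward direction cites the general implication ``free $\Rightarrow$ residually properly outer,'' and the converse produces an invariant proper ideal from fixed points whose quotient carries the identity automorphism, which is inner and hence not properly outer. The only cosmetic difference is that you quotient by the maximal ideal at a single fixed point (getting $\bbC$), whereas the paper quotients by the ideal of functions vanishing on the whole fixed-point set $K = \Fix(\hat{\alpha})$ (getting $C(K)$); both work identically.
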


\begin{proof}
As noted at the beginning of this section, if $\hat{\alpha} \in \Homeo(X)$ is free, then $\alpha \in \Aut(C(X))$ is residually properly outer. Conversely, suppose $\alpha \in \Aut(C(X))$ is residually properly outer and consider the closed $\hat{\alpha}$-invariant subset of $X$, $K:= \Fix(\hat{\alpha})$. If $K \neq \emptyset$, then $\dot{\alpha} \in \Aut(C(K))$ is properly outer. But $\dot{\alpha} = \id_{C(K)}$, a contradiction. Thus $K = \emptyset$ and $\hat{\alpha}$ is free.
\end{proof}

On the other hand, when $\A$ is simple, there are no proper ideals, and residual proper outerness is just (proper) outerness. Thus in the simple case, essential freeness is stronger than residual proper outerness. Whether it is strictly stronger is unclear. If, in addition, $\A$ is separable and $G$ is countable, then residual proper outerness implies essential freeness, by Theorem \ref{equivalences} and \cite[Lemma 2.17]{KwasniewskiMeyer2018}. So if one believes that essential freeness is strictly stronger than residual proper outerness in the simple case, one needs to consider uncountable groups and/or non-separable $C^*$-algebras. Perhaps $\mathcal U(\bbM_2) \curvearrowright \O_2$ could be an example. Here $\mathcal U(\bbM_2)$ is the two-dimensional complex unitary group; $\O_2 = C^*(S_1,S_2)$ is the Cuntz algebra; and the automorphism associated with $\begin{bmatrix} u_{11} & u_{12}\\ u_{21} & u_{22} \end{bmatrix} \in \mathcal U(\bbM_2)$ is determined by the assignments $S_1 \mapsto u_{11}S_1+u_{12}S_2$ and $S_2 \mapsto u_{21}S_1+u_{22}S_2$. Of course, one could also search for examples among outer actions of discrete groups on II$_1$ factors.

In summary, we know that essential freeness does not imply residual proper outerness, but we do not know whether residual proper outerness implies essential freeness.


\begin{thebibliography}{99}

\bibitem{AkemannWeaver2004}
Akemann, Charles; Weaver, Nik \emph{Consistency of a counterexample to Naimark's problem.} Proc. Natl. Acad. Sci. USA 101 (2004), no. 20, 7522-7525.

\bibitem{ArchboldSpielberg1994}
Archbold, R. J.; Spielberg, J. S. \emph{Topologically free actions and ideals in discrete $C^*$-dynamical systems.} Proc. Edinburgh Math. Soc. (2) 37 (1994), no. 1, 119-124.

\bibitem{BerberianBook}
Berberian, Sterling K. \emph{Baer $*$-rings.} Die Grundlehren der mathematischen Wissenschaften, Band 195. Springer-Verlag, New York-Berlin, 1972. xiii+296 pp.

\bibitem{CameronPittsZarikian2013}
Cameron, Jan; Pitts, David R.; Zarikian, Vrej \emph{Bimodules over Cartan MASAs in von Neumann algebras, norming algebras, and Mercer's theorem.} New York J. Math. 19 (2013), 455-486.

\bibitem{CameronSmith2016}
Cameron, Jan; Smith, Roger R. \emph{Intermediate subalgebras and bimodules for general crossed products of von Neumann algebras.} Internat. J. Math. 27 (2016), no. 11, 1650091, 28 pp.

\bibitem{Dixmier1977}
Dixmier, Jacques \emph{$C^*$-algebras.} Translated from the French by Francis Jellett. North-Holland Mathematical Library, Vol. 15. North-Holland Publishing Co., Amsterdam-New York-Oxford, 1977. xiii+492 pp.

\bibitem{Elliott1980}
Elliott, George A. \emph{Some simple $C^*$-algebras constructed as crossed products with discrete outer automorphism groups.} Publ. Res. Inst. Math. Sci. 16 (1980), no. 1, 299-311.

\bibitem{EnomotoTamaki1975}
Enomoto, Masatoshi; Tamaki, Kazuhiro \emph{Freely acting automorphisms of abelian $C^*$-algebras.} Nagoya Math. J. 56 (1975), 7-11.

\bibitem{Fell1961} Fell, J. M. G.
\emph{The structure of algebras of operator fields.}
Acta Math.\ 106 (1961), 233-280.

\bibitem{Hamana1979}
Hamana, Masamichi \emph{Injective envelopes of $C^*$-algebras.} J. Math. Soc. Japan 31 (1979), no. 1, 181-197.

\bibitem{Hamana1982}
Hamana, Masamichi \emph{Tensor products for monotone complete $C^*$-algebras. I.} Japan. J. Math. (N.S.) 8 (1982), no. 2, 259-283.

\bibitem{Hamana1985}
Hamana, Masamichi \emph{Injective envelopes of $C^*$-dynamical systems.} Tohoku Math. J. (2) 37 (1985), no. 4, 463-487.

\bibitem{HamanaOkayasuSaito1982}
Hamana, Masamichi; Okayasu, Takateru; Sait\^{o}, Kazuyuki \emph{Extensions of derivations and automorphisms from $C^*$-algebras to their injective envelopes.} Tohoku Math. J. (2) 34 (1982), no. 2, 277-287.

\bibitem{KadisonRingrose1967} 
Kadison, Richard V.; Ringrose, John R. \emph{Derivations and automorphisms of operator algebras.} Comm. Math. Phys. 4 (1967), 32-63.

\bibitem{KadisonRingroseVolII}
Kadison, Richard V.; Ringrose, John R. \emph{Fundamentals of the theory of operator algebras. Vol. II. Advanced theory.} Corrected reprint of the 1986 original. Graduate Studies in Mathematics, 16. American Mathematical Society, Providence, RI, 1997. pp. i-xxii and 399-1074.

\bibitem{Kallman1969}
Kallman, Robert R. \emph{A generalization of free action.} Duke Math. J. 36 (1969), 781-789.

\bibitem{Kishimoto1982}
Kishimoto, Akitaka \emph{Freely acting automorphisms of $C^*$-algebras.} Yokohama Math. J. 30 (1982), no. 1-2, 39-47.

\bibitem{KwasniewskiMeyer2018}
Kwa\'{s}niewski, Bartosz Kosma; Meyer, Ralf \emph{Aperiodicity, topological freeness and pure outerness: from group actions to Fell bundles.} Studia Math. 241 (2018), no. 3, 257-303.

\bibitem{KwasniewskiMeyer2022}
Kwa\'{s}niewski, Bartosz Kosma; Meyer, Ralf \emph{Aperiodicity: the almost extension property and uniqueness of pseudo-expectations.} Int. Math. Res. Not. IMRN 2022, no. 18, 14384-14426.

\bibitem{NagyReznikoff2014}
Nagy, Gabriel; Reznikoff, Sarah \emph{Pseudo-diagonals and uniqueness theorems.} Proc. Amer. Math. Soc. 142 (2014), no. 1, 263-275.

\bibitem{Olesen1974}
Olesen, Dorte \emph{Derivations of $AW^*$-algebras are inner.} Pacific J. Math. 53 (1974), 555-561.

\bibitem{Olesen1975}
Olesen, Dorte \emph{Inner $*$-automorphisms of simple $C^*$-algebras.} Comm. Math. Phys. 44 (1975), no. 2, 175-190.

\bibitem{OlesenPedersen1980}
Olesen, Dorte; Pedersen, Gert K. \emph{Applications of the Connes spectrum to $C^*$-dynamical systems. II.} J. Functional Analysis 36 (1980), no. 1, 18-32.

\bibitem{OlesenPedersen1982}
Olesen, Dorte; Pedersen, Gert K. \emph{Applications of the Connes spectrum to $C^*$-dynamical systems. III.} J. Functional Analysis 45 (1982), no. 3, 357-390.

\bibitem{PedersenBook}
Pedersen, Gert K. \emph{$C^*$-algebras and their automorphism groups.} Second edition of [MR0548006]. Edited and with a preface by S{\slasho}ren Eilers and Dorte Olesen. Pure and Applied Mathematics (Amsterdam). Academic Press, London, 2018. xviii+520 pp.

\bibitem{Phillips1987}
Phillips, John \emph{Outer automorphisms of separable $C^*$-algebras.} J. Funct. Anal. 70 (1987), no. 1, 111-116.

\bibitem{Pitts2008}
Pitts, David R. \emph{Norming algebras and automatic complete boundedness of isomorphisms of operator algebras.} Proc. Amer. Math. Soc. 136 (2008), no. 5, 1757-1768.

\bibitem{Pitts2017}
Pitts, David R. \emph{Structure for regular inclusions. I.} J. Operator Theory 78 (2017), no. 2, 357-416.

\bibitem{PittsZarikian2015}
Pitts, David R.; Zarikian, Vrej \emph{Unique pseudo-expectations for $C^*$-inclusions.} Illinois J. Math. 59 (2015), no. 2, 449--483.

\bibitem{PopSinclairSmith2000}
Pop, Florin; Sinclair, Allan M.; Smith, Roger R. \emph{Norming $C^*$-algebras by $C^*$-subalgebras.} J. Funct. Anal. 175 (2000), no. 1, 168-196.

\bibitem{PopSmith2008}
Pop, Florin; Smith, Roger R. \emph{Cones arising from $C^*$-subalgebras and complete positivity.} Math. Proc. Cambridge Philos. Soc. 145 (2008), no. 1, 121-127.

\bibitem{SaitoWright1984}
Sait\^{o}, Kazuyuki; Wright, J. D. Maitland \emph{Outer automorphisms of injective $C^*$-algebras.} Math. Scand. 54 (1984), no. 1, 40-50.

\bibitem{SmithMiSoo1970} 
Smith, Mi-Soo Bae \emph{On automorphism groups of $C^*$-algebras.} Trans. Amer. Math. Soc. 152 (1970), 623-648.

\bibitem{Zarikian2019a}
Zarikian, Vrej \emph{Unique expectations for discrete crossed products.} Ann. Funct. Anal. 10 (2019), 60-71.

\bibitem{Zarikian2019b}
Zarikian, Vrej \emph{The pure extension property for discrete crossed products.} Houston J. Math. 45 (2019), 233-243.

\end{thebibliography}
\end{document}